\newtheorem{theorem}{Theorem}
\theoremstyle{plain}
\newtheorem{corollary}{Corollary}
\newtheorem{definition}{Definition}
\newtheorem{lemma}{Lemma}
\newtheorem{notation}{Notation}
\newtheorem{proposition}{Proposition}
\newtheorem{remark}{Remark}
\numberwithin{equation}{section}
\begin{document}
\title[Absolute continuity of orbital measures]{The absolute continuity of
convolutions of orbital measures in symmetric spaces}
\author{Sanjiv Kumar Gupta}
\address{Dept. of Mathematics and Statistics\\
Sultan Qaboos University\\
P.O.Box 36 Al Khodh 123\\
Sultanate of Oman}
\email{gupta@squ.edu.om}
\author{Kathryn E. Hare}
\address{Dept. of Pure Mathematics\\
University of Waterloo\\
Waterloo, Ont.,~Canada\\
N2L 3G1}
\email{kehare@uwaterloo.ca}
\thanks{This research is supported in part by NSERC \#44597. The first
author thanks the University of Waterloo for their hospitality when some of
this research was done.}
\subjclass[2000]{Primary 43A80; Secondary 22E30, 43A90, 53C35}
\keywords{orbital measure, symmetric space, double coset, absolute continuity%
}

\begin{abstract}
We characterize the absolute continuity of convolution products of orbital
measures on the classical, irreducible Riemannian symmetric spaces $G/K$ of
Cartan type $III$, where $G$ is a non-compact, connected Lie group and $K$
is a compact, connected subgroup. By the orbital measures, we mean the
uniform measures supported on the double cosets, $KzK,$ in $G$. The
characterization can be expressed in terms of dimensions of eigenspaces or
combinatorial properties of the annihilating roots of the elements $z$.

A consequence of our work is to show that the convolution product of any rank%
$G/K,$ continuous, $K$-bi-invariant measures is absolutely continuous in any
of these symmetric spaces, other than those whose restricted root system is
type $A_{n}$ or $D_{3}$, when rank$G/K$ $+1$ is needed.
\end{abstract}

\maketitle

\section{Introduction}

In this paper, we study the smoothness properties of $K$-bi-invariant
measures on the irreducible Riemannian symmetric spaces $G/K$, where $G$ is
a connected Lie group and $K$ is a compact, connected subgroup fixed by a
Cartan involution of $G$. Inspired by earlier work of Dunkl \cite{Du} on
zonal measures on spheres, Ragozin in \cite{Ra} and \cite{Ra2} showed that
the convolution of any $\dim G/K,$ continuous, $K$-bi-invariant measures on $%
G$ was absolutely continuous with respect to the Haar measure on $G$. This
was improved by Graczyk and Sawyer, who in \cite{GSJFA} showed that if $G$
was non-compact and $n=$ rank$G/K$, then any $n+1$ convolutions of such
measures is absolutely continuous and that this is sharp for the symmetric
spaces whose restricted root system was type $A_{n}$. They conjectured that $%
n+1$ was always sharp. One consequence of our work is to show this
conjecture is false. In fact, for all the classical, non-compact symmetric
spaces of rank $n$, other than those whose restricted root system is type $%
A_{n}$, the convolution product of any $n$ $K$-bi-invariant, continuous
measures is absolutely continuous and this is sharp.

We obtain this result by studying a particular class of examples of $K$%
-bi-invariant, continuous measures, the so-called orbital measures $\nu
_{z}=m_{K}\ast \delta _{z}\ast m_{K},$ where $m_{K}$ is the Haar measure on $%
K$. These are the uniform measures supported on the double cosets $KzK$ in $%
G $. They are purely singular, probability measures. The main objective of
this paper is to characterize the $L$-tuples $(z_{1},...,z_{L})$ such that $%
\nu _{z_{1}}\ast \cdot \cdot \cdot \ast \nu _{z_{L}}$ is absolutely
continuous for the classical symmetric spaces of Cartan type $III$.

The convolution product $\nu _{z_{1}}\ast \cdot \cdot \cdot \ast \nu
_{z_{L}} $ is supported on the product of double cosets $Kz_{1}Kz_{2}\cdot
\cdot \cdot Kz_{L}K$ and hence if the convolution product is absolutely
continuous than the product of double cosets has postive Haar measure in $G$%
. In fact, if the convolution product is absolutely continuous, the product
of double cosets has non-empty interior and the converse is true, as well,
thus we also characterize which products of double cosets have non-empty
interior.

Given any $z_{j}\in G$ there is some $Z_{j}\in \mathfrak{g}$, the Lie
algebra of $G$, such that $z_{j}=\exp Z_{j}$. Our characterization is in
terms of combinatorial properties of the set of annihilating roots of the
elements $Z_{j}$. It also can be expressed in terms of the dimensions of the
largest eigenspaces when we view the $Z_{j}$ as matrices in the classical
Lie algebras.

In a series of papers, (see \cite{GSLie} - \cite{GSArxiv} and the examples
cited therein), Graczyk and Sawyer found a characterization for the absolute
continuity of $\nu _{x}\ast \nu _{y}$ for certain of the type $III$ (mainly)
classical symmetric spaces. Our approach was inspired by their work, but is
more abstract and relies heavily upon combinatorial properties of the root
systems and root spaces of Lie algebras.

The Cartan involution also gives rise to a decomposition of the Lie algebra
as $\mathfrak{g=k\oplus p}$. Take a maximal abelian subspace $\mathfrak{a}$
of $\mathfrak{p}$ and put $A=\exp \mathfrak{a}$. Then $G=KAK$, thus we can
always assume $z\in A$ when studying orbital measures. Closely related to
the $K$-bi-invariant orbital measures supported on double cosets in $G$ are
the $K$-invariant, uniform measures, $\mu _{Z},$ supported on the orbits
under the $Ad(K)$ action of elements $Z\in \mathfrak{a}$. It is known (\cite%
{AG}) that $\mu _{Z_{1}}\ast \cdot \cdot \cdot \ast \mu _{Z_{L}}$ is
absolutely continuous with respect to Lebesgue measure on $\mathfrak{p}$ if
and only if $\nu _{z_{1}}\ast \cdot \cdot \cdot \ast \nu _{z_{L}}$ is
absolutely continuous on $G$ when $z_{j}=\exp Z_{j},$ and this is also
equivalent to the sum of the $Ad(K)$ orbits generated by the $Z_{j}$ having
non-empty interior.

The absolute continuity of convolution products of orbital measures is
connected with questions about spherical functions $\phi _{\lambda }(\exp X)$
where $\lambda $ is a complex-valued linear form on $\mathfrak{a}$ and $X$ $%
\in \mathfrak{a}$. This is the spherical Fourier transform of the orbital
measure $\nu _{\exp X}$. The product formula states that 
\begin{equation*}
\phi _{\lambda }(e^{Z_{1}})\cdot \cdot \cdot \phi _{\lambda
}(e^{Z_{L}})=\int_{G}\phi _{\lambda }d\nu _{z_{1}}\ast \cdot \cdot \cdot
\ast \nu _{z_{L}},
\end{equation*}%
hence the absolute continuity of convolutions of orbital measures gives a
formula for a product of spherical functions.

An example of a compact, symmetric space is $(G\times G)/\Delta (G)$ where $G
$ is a compact, connected, simple Lie group and $\Delta (G)=\{(g,g):g\in
G\}\simeq G$. These are the symmetric spaces of Cartan type $II$. In this
case, the orbital measure $\nu _{z}$ for $z=(g,g),$ supported on the double
coset $\Delta (G)z\Delta (G)$, can be identified with the uniform measure
supported on the conjugacy class in $G$ containing the element $g$. The $%
\Delta (G)$-invariant measure, $\mu _{Z}$ for $Z\in \mathfrak{g},$ can be
identified with the measure on the compact Lie algebra $\mathfrak{g}$ that
is uniformly distributed on the adjoint orbit in $\mathfrak{g}$ containing $Z
$. These symmetric spaces are dual to the non-compact, symmetric spaces of
Cartan type $IV$, $G^{\mathbb{C}}/G$, where $G^{\mathbb{C}}$ is the
non-compact Lie group corresponding to the Lie algebra $\mathfrak{g\oplus ig}
$ (over $\mathbb{R}$) (see \cite{AG}, \cite{He}). It can be seen from \cite%
{AG} that the absolute continuity problem for Cartan type $IV$ symmetric
spaces can be deduced from the analogous problem for the corresponding $G$%
-invariant orbital measures $\mu _{Z},$ for $Z\in \mathfrak{g,}$ from the
(dual) Cartan type $II$ spaces.

In a series of papers, the authors (with various coauthors) studied the
absolute continuity problem for orbital measures in the compact setting. The
sharp exponent $n=n(Z)$ (or $n(z)$) with the property that the $n$-fold
convolution product of $\mu _{Z}$ (or $\nu _{z})$ was absolutely continuous
was determined for the classical compact Lie algebras $\mathfrak{g}$ in \cite%
{GHMathZ}, for the classical compact Lie groups in \cite{GHAdv}, and for the
exceptional compact Lie groups and algebras in \cite{HJSY}. Sufficient
conditions for these problems were found using harmonic analysis methods not
generally available in the symmetric space setting.

In \cite{Wr}, Wright used geometric methods to extend this result to the
convolution of different orbital measures on the Lie algebras of type $A_{n}$%
. Later, the authors in \cite{GHAbs} obtained a (almost complete)
characterization for the absolute continuity of convolution products of
arbitrary orbital measures in all the classical compact Lie algebras and
hence also those for the classical symmetric spaces of Cartan type $IV$.
This was done by mainly algebraic/combinatorial methods. These ideas are key
to this paper, particularly for the symmetric spaces with one-dimensional,
restricted root spaces, but many additional technical complications arise in
the more general symmetric space setting.

Earlier, Ricci and Stein in \cite{RS1} and \cite{RS2} studied the smoothness
properties of convolutions of measures supported on manifolds whose product
has non-empty interior. They proved, for example, that if the surface
measure of a compact manifold has an absolutely continuous convolution
product, then the density function of that convolution product is actually
in $L^{1+\varepsilon }$ for some $\varepsilon >0$. A number of authors have
attempted to compute the density function (in some special cases), but this
is very hard. We refer the reader to \cite{DRW}, \cite{FG} and \cite{GS2004}%
, for example. Sums of adjoint orbits have also been studied, such as in 
\cite{KT} where the sum of two adjoint orbits in $su(n)$ is described. The
smoothing properties of convolution is also of interest in the study of
random walks on groups and hypergroups; c.f., \cite{BH}, \cite{JR}, \cite{LW}%
.

\subsection{Organization of the paper}

We begin in the second section by introducing terminology, including the
definition of orbital measures and the very important notion of annihilating
roots. We also explain the connection between the absolute continuity
problem for the two classes of orbital measures and the connection with
questions about sums of orbits / products of double cosets. In section three
we explain what is meant by the type of an element, and what is meant by
eligible and exceptional tuples. These ideas come from \cite{GHAbs}. We also
give the formal statement of our main theorem, that absolute continuity is
characterized by eligibility and non-exceptionality. An immediate corollary
is that except when the restricted root space is type $A_{n}$, any
convolution product of $n=$ rank$G/K$ continuous bi-invariant measures on $G$
is absolutely continuous. Moreover, we can describe which $(n-1)$-fold
products of orbital measures are not absolutely continuous. The proof of
sufficiency of our characterization is the content of section four and
occupies the majority of the paper. Finally, in section five we prove that
the non-eligible and the exceptional tuples do not give rise to absolutely
continuous convolution products. Useful basic facts about the symmetric
spaces of Cartan type $III$ are summarized in the appendix.

\section{Set Up and Preliminary results}

\subsection{Cartan decomposition}

Let $G$ be a non-compact, connected, Lie group with Lie algebra $\mathfrak{g}
$ and suppose $\theta $ is an involution of $G$. Let $K=\{g\in G:\theta
(g)=g\}$ and assume $K$ is compact and connected. The quotient space, $G/K$,
is called a symmetric space. We let $W$ denote its Weyl group.

The map $\theta $ induces an involution of $\mathfrak{g}$, also denoted $%
\theta $. We put 
\begin{equation*}
\mathfrak{k}=\left\{ X\in \mathfrak{g\mid \theta }\left( X\right) =X\right\} 
\text{ and }\mathfrak{p}=\left\{ X\in \mathfrak{g\mid \theta }\left(
X\right) =-X\right\} ,
\end{equation*}%
the $\pm 1$ eigenspaces of $\theta $, respectively. The decomposition $%
\mathfrak{g}=\mathfrak{k\oplus p}$ is called the Cartan decomposition of the
Lie algebra $\mathfrak{g}.$ The subspaces $\mathfrak{k}$ and $\mathfrak{p}$
satisfy the following rules: 
\begin{equation*}
\left[ \mathfrak{k},\ \mathfrak{k}\right] \subseteq \mathfrak{k,}\ \left[ 
\mathfrak{k},\ \mathfrak{p}\right] \subseteq \mathfrak{p,}\ \left[ \mathfrak{%
p},\ \mathfrak{p}\right] \subseteq \mathfrak{k.}
\end{equation*}%
We fix a maximal abelian (as a subalgebra of $\mathfrak{g}$) subspace $%
\mathfrak{a}$ of $\mathfrak{p}$ and let $\mathfrak{a}^{\ast }$ be the dual
of $\mathfrak{a}$.

As is standard, we will let $ad(X)(Y)=[X,Y],$ denote by $Ad(\cdot )$ the
adjoint action of $G$ on $\mathfrak{g}$ and write $\exp $ for the
exponential map from $\mathfrak{g}$ to $G$.

It is known that $Ad(k):\mathfrak{p}\rightarrow \mathfrak{p}$ whenever $k\in
K$. Moreover, $\exp \mathfrak{k}=K$ and if we put $A=\exp \mathfrak{a}$,
then $G=KAK$ (\cite[p. 459]{Kn}).

By $\mathfrak{a}^{+}$ we mean the subset 
\begin{equation*}
\mathfrak{a}^{+}=\{H\in \mathfrak{a:\alpha }(H)>0\text{ for all }\alpha 
\mathfrak{\in a}^{\ast }\}.
\end{equation*}%
The sets $w(\mathfrak{a}^{+})$ are disjoint for distinct $w\in W$ and $%
\mathfrak{a=}\bigcup_{w\in W}w(\overline{\mathfrak{a}^{+}}).$

Put $A^{+}=\exp \mathfrak{a}^{+}$. It is known \textbf{(}\cite[p.402]{He}%
\textbf{)} that $G=K\overline{A^{+}}K$. Indeed, given any $g\in G$, there is
a pair $k_{1},k_{2}\in K$ and a unique $H\in \overline{\mathfrak{a}^{+}}$
such that $g=k_{1}(\exp H)k_{2}$. We define a map $\mathcal{A}:G\rightarrow 
\overline{\mathfrak{a}^{+}}\subseteq \mathfrak{a}$ by $\mathcal{A}(g)=H.$ We
also speak of $\mathcal{A}$ as a map from $G\rightarrow \overline{A^{+}}$ 
\textbf{\ }by taking $\mathcal{A}(g)=\exp H$. It will be clear from the
context which we mean.

For non-zero $\alpha \in \mathfrak{a}^{\ast }$ we consider the set 
\begin{equation*}
\mathfrak{g}_{\alpha }=\left\{ X\in \mathfrak{g:}\left[ H,X\right] =\alpha
\left( H\right) X\text{ for all }H\in \mathfrak{a}\right\} .
\end{equation*}%
The set of restricted roots, $\Phi $, is defined by 
\begin{equation*}
\Phi =\left\{ \alpha \in \mathfrak{a}^{\ast }:\mathfrak{g}_{\alpha }\neq
0\right\}
\end{equation*}%
and the subset of positive roots is denoted by $\Phi ^{+}$. The set $\Phi $
is a root system, although not necessarily reduced because it is possible
for both $\alpha $ and $2\alpha $ to be a root.

The vector spaces $\mathfrak{g}_{\alpha }$ corresponding to $\alpha \in \Phi 
$ are known as the restricted root spaces and need not be one-dimensional.
It is well known that $\theta \mathfrak{g}_{\alpha }=\mathfrak{g}_{-\alpha }$%
. The Lie algebra $\mathfrak{g}$ can be decomposed as 
\begin{equation*}
\mathfrak{g}\mathfrak{=}\mathfrak{g}_{0}\oplus \sum\limits_{\alpha \in \Phi }%
\mathfrak{g}_{\alpha },\text{ where }\mathfrak{g}_{0}=\mathfrak{a}\oplus 
\mathfrak{m}
\end{equation*}%
with $\mathfrak{m}=\left\{ X\in \mathfrak{k:}\left[ X,\mathfrak{a}\right]
=0\right\} $.

The $\pm 1$ eigenspaces of $\theta $ can also be described as 
\begin{equation*}
\mathfrak{k}={sp}\left\{ X+\theta X:X\in \mathfrak{g}_{\alpha },\ \alpha \in
\Phi ^{+}\bigcup \{0\}\right\} ={sp}\left\{ X+\theta X:X\in \mathfrak{g}%
_{\alpha },\ \alpha \in \Phi ^{+}\right\} \oplus \mathfrak{m}
\end{equation*}%
and 
\begin{equation*}
\mathfrak{p}={sp}\left\{ X-\theta X:X\in \mathfrak{g}_{\alpha },\ \alpha \in
\Phi ^{+}\right\} \oplus \mathfrak{a}
\end{equation*}%
where by $sp$ we will mean the real span.

We will put 
\begin{equation*}
X^{+}=X+\theta X\text{ and }X^{-}=X-\theta X.
\end{equation*}

We will write $\mathfrak{g}_{n}$, $\mathfrak{k}_{n}$ etc if we want to
emphasize that the rank of the symmetric space is $n$.

Throughout this paper we will assume $G/K$ is an irreducible, Riemannian,
globally symmetric space of Type III for which the root system of the Lie
group $G$ is not exceptional. These have Cartan classifications $AI,AII,AIII$%
, $BI,$ $CI,CII$, $DI$ and $DIII$. Their restricted root systems have Lie
types $A_{n}$, $B_{n}$, $C_{n}$, $D_{n}$ or $BC_{n}$. In the appendix, we
summarize basic information about these symmetric spaces, including the
choices of $G,K$, rank and the dimension of the restricted root spaces. This
information is taken from \cite{Bu}, \cite{CM}, \cite{He} and \cite{Kn}. For
general facts about root systems, we refer the reader to \cite{Hu} and \cite%
{Ka}.

\subsection{Orbital measures, orbits and double cosets}

A measure $\mu $ on $\mathfrak{p}$ will be said to be $K$-invariant if $\mu
(E)=\mu (Ad(k)E)$ for all $k\in K$ and Borel sets $E\subseteq \mathfrak{p}$.
Corresponding to each $Z\in \mathfrak{p}$ is a $K$-invariant probability
measure called the\textit{\ orbital measure,} $\mu _{Z}$, defined by%
\begin{equation*}
\int_{\mathfrak{p}}fd\mu _{Z}=\int_{K}f(Ad(k)Z)dm_{K}(k)
\end{equation*}%
for any continuous, compactly supported function $f$ on $\mathfrak{p}$. Here 
$m_{K}$ denotes the Haar measure on the compact group $K$. This measure is
supported on the $Ad_{K}$-orbit of $Z$, meaning the orbit of $Z$ under the
action of $K$ on $\mathfrak{p}$. We denote this set by $O_{Z}$, thus

\begin{equation*}
O_{Z}=\{Ad(k)Z:k\in K\}\text{.}
\end{equation*}%
Every $Ad_{K}$-orbit contains an element of $\mathfrak{a}$ since $\mathfrak{p%
}=\bigcup_{k\in K}Ad(k)\mathfrak{a}$ (\cite[p. 455]{Kn}), hence in studying
orbital measures, $\mu _{Z}$, there is no loss in assuming $Z\in \mathfrak{a}
$.

$Ad_{K}$-orbits are manifolds of proper dimension in $\mathfrak{p}$ and
hence have Lebesgue measure zero and empty interior. Thus the orbital
measures are singular with respect to Lebesgue measure.

A measure $\mu $ on $G$ will be said to be $K$-bi-invariant if $\mu (E)=\mu
(k_{1}Ek_{2})$ for all $k_{1},k_{2}\in K$ and Borel sets $E\subseteq G$.
These measures can be naturally identified with the left $K$-invariant
measures on the symmetric space $G/K$. An example is the measure we will
denote by $\nu _{z}$ for $z\in G,$ defined by 
\begin{equation*}
\int_{G}f\,d\nu _{z}=\int_{K}\int_{K}f(k_{1}zk_{2})dm_{K}(k_{1})dm_{K}(k_{2})
\end{equation*}%
for any compactly supported, continuous function $f$ on $G$. This is the
measure supported on the double coset $KzK$ in $G$ and is also called an
orbital measure. It is easy to see that $\nu _{z}=m_{K}\ast \delta _{z}\ast
m_{K}$ where $\delta _{z}$ is the point mass measure at $z$. Since $G=KAK$,
every double coset contains an element of $A$ and hence there is no loss of
generality in assuming $z\in A$.

Double cosets are also manifolds of proper dimension, hence have Haar
measure zero in $G$ and empty interior. It follows that the measures $\nu
_{z}$ are singular with respect to Haar measure.

The $K$-bi-invariant measures are also known as zonal measures.

\subsection{Annihilating roots and Tangent spaces}

Given $Z\in \mathfrak{a}$, we let 
\begin{equation*}
\Phi _{Z}=\{\alpha \in \Phi :\alpha (Z)=0\}
\end{equation*}%
be the set of annihilating roots of $Z$ and let 
\begin{equation*}
\mathcal{N}_{Z}=sp\{X_{\alpha }-\theta X_{\alpha }:X_{\alpha }\in \mathfrak{g%
}_{\alpha }\text{, }\alpha \notin \Phi _{Z}\}\subseteq \mathfrak{p}\text{.}
\end{equation*}%
The set $\Phi _{Z}$ is itself a root system and is a proper root subsystem
provided $Z\neq 0$. As we will see, these root subsystems, $\Phi _{Z},$ and
the associated spaces, $\mathcal{N}_{Z},$ are of fundamental importance in
studying orbits and orbital measures.

A very useful fact is that if $Z\in \mathfrak{a}$ and $X_{\alpha }\in 
\mathfrak{g}_{\alpha }$, then 
\begin{equation}
\lbrack Z,X_{\alpha }^{+}]=[Z,X_{\alpha }+\theta X_{\alpha }]=\alpha
(Z)(X_{\alpha }-\theta X_{\alpha })=\alpha (Z)X_{\alpha }^{-}.  \label{adZ}
\end{equation}%
In particular, 
\begin{equation*}
\mathcal{N}_{Z}=sp\{[Z,X]:X\in \mathfrak{k}\}=\text{Im}ad(Z)|_{\mathfrak{k}}%
\text{.}
\end{equation*}

It is well known that the tangent space to the $Ad_{K}$-orbit of $Z$ is $%
T_{Z}(O_{Z})=\{[Z,Y]:Y\in \mathfrak{k}\}.$ Hence%
\begin{equation*}
T_{Z}(O_{Z})=\mathcal{N}_{Z}
\end{equation*}%
and, in particular,%
\begin{equation*}
\dim O_{Z}=\dim \mathcal{N}_{Z}=\sum_{\alpha \in \Phi ^{+}\diagdown \Phi
_{Z}}\dim \mathfrak{g}_{\alpha }.
\end{equation*}%
More generally, if $X\in O_{Z}$, say $X=Ad(k)Z$, then $%
T_{X}(O_{Z})=Ad(k)T_{Z}(O_{Z})=Ad(k)\mathcal{N}_{Z}$.

Ragozin \cite{Ra} proved that questions about the absolute continuity of
convolution products of orbital measures are related to geometric questions
about orbits and tangent spaces. Here are some key ideas.

\begin{proposition}
\label{keyprop} Let $Z_{1},...,Z_{t}\in \mathfrak{a}$ and $z_{j}=\exp
Z_{j}\in A$. The following are equivalent:

\begin{enumerate}
\item The convolution product of orbital measures $\mu _{Z_{1}}\ast \cdot
\cdot \cdot \ast \mu _{Z_{t}}$ is absolutely continuous with respect to
Lebesgue measure on $\mathfrak{p}$.

\item The sum $O_{Z_{1}}+\cdot \cdot \cdot +O_{Z_{t}}$ has non-empty
interior (equivalently, positive Lebesgue measure) in $\mathfrak{p}$.

\item There is some $k_{1}=Id$, $k_{2},...,k_{t}\in K$ such that 
\begin{equation*}
sp\{Ad(k_{j})\mathcal{N}_{Z_{j}}:j=1,...,t\}=\mathfrak{p.}
\end{equation*}

\item The convolution product of orbital measures $\nu _{z_{1}}\ast \cdot
\cdot \cdot \ast \nu _{z_{t}}$ is absolutely continuous with respect to Haar
measure on $G$.

\item The product of the double cosets $Kz_{1}Kz_{2}K\cdot \cdot \cdot
Kz_{t}K$ has non-empty interior (equivalently, positive Haar measure) in $G$.

\item There is some $k_{2},...,k_{t}\in K$ such that 
\begin{equation}
\{X_{1}+Ad(z_{1})X_{2}+\cdot \cdot \cdot +Ad(z_{1}k_{2}z_{2}\cdot \cdot
\cdot k_{t}z_{t})X_{t+1}:X_{j}\in \mathfrak{k}\}=\mathfrak{g.}
\label{diffrange}
\end{equation}
\end{enumerate}

Furthermore, in the case that (3) or (6) holds for some $(t-1)$-tuple, $%
(k_{2},...,k_{t})$, then it holds for almost all $(k_{2},...,k_{t})\in
K^{t-1}$.
\end{proposition}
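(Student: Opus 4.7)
The plan is to parametrize the orbit sum and the product of double cosets by explicit smooth maps from powers of $K$, and then to invoke the general principle that the pushforward of a smooth measure under a smooth map is absolutely continuous exactly when the map has a regular point (equivalently, when its image has non-empty interior). Specifically, define
\begin{equation*}
\Phi : K^{t} \to \mathfrak{p},\quad \Phi(k_{1},\ldots,k_{t}) = \sum_{j=1}^{t} Ad(k_{j})Z_{j},
\end{equation*}
\begin{equation*}
\Psi : K^{t+1}\to G,\quad \Psi(k_{1},\ldots,k_{t+1}) = k_{1}z_{1}k_{2}z_{2}\cdots k_{t}z_{t}k_{t+1}.
\end{equation*}
Since $\mu_{Z_{j}}$ is the pushforward of $m_{K}$ under $k\mapsto Ad(k)Z_{j}$ and $\nu_{z_{j}} = m_{K}\ast \delta_{z_{j}}\ast m_{K}$, the pushforwards of Haar measure on $K^{t}$ and $K^{t+1}$ under $\Phi$ and $\Psi$ are exactly $\mu_{Z_{1}}\ast\cdots\ast\mu_{Z_{t}}$ and $\nu_{z_{1}}\ast\cdots\ast\nu_{z_{t}}$, with images $O_{Z_{1}}+\cdots+O_{Z_{t}}$ and $Kz_{1}K\cdots z_{t}K$ respectively.

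The measure-theoretic principle I would use is standard: for a smooth map $F\colon M\to N$ with a smooth probability measure on $M$, if $F$ has a regular point then the submersion theorem yields an open set on which $F$ is a submersion, so fibre integration gives the pushforward a continuous density there, and together with Sard's theorem (applied to the critical set, whose image has measure zero) this produces an $L^{1}$ density on all of $N$; conversely, if every point is critical then Sard gives that the image has measure zero. This immediately delivers (1)$\Leftrightarrow$(2) and (4)$\Leftrightarrow$(5), and reduces the verifications of (3) and (6) to the existence of a regular point of $\Phi$ and $\Psi$.

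I would then compute the differentials. Perturbing $k_{j}\to e^{sX_{j}}k_{j}$ in $\Phi$ gives the tangent vector $[X_{j},Ad(k_{j})Z_{j}] = Ad(k_{j})[Ad(k_{j}^{-1})X_{j},Z_{j}]$, and using $Ad(K)\mathfrak{k}=\mathfrak{k}$ one sees that the image of $d\Phi_{(k_{1},\ldots,k_{t})}$ equals $\sum_{j}Ad(k_{j})\mathcal{N}_{Z_{j}}$; applying $Ad(k_{1}^{-1})$ to a full-rank instance allows the normalization $k_{1}=\mathrm{Id}$, giving (2)$\Leftrightarrow$(3). For $\Psi$ at the point $(\mathrm{Id},k_{2},\ldots,k_{t},\mathrm{Id})$, the analogous perturbation at the $j$-th factor contributes the tangent vector $Ad(z_{1}k_{2}z_{2}\cdots k_{j-1}z_{j-1})X_{j}\cdot g_{0}$ at $g_{0}=\Psi(\mathrm{Id},k_{2},\ldots,k_{t},\mathrm{Id})$ (empty product equal to $\mathrm{Id}$); right-translating by $g_{0}^{-1}$ identifies the image of $d\Psi$ in $\mathfrak{g}$ with the left-hand side of (\ref{diffrange}). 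The span is independent of $k_{t+1}$, and applying $Ad(k_{1}^{-1})$ again absorbs $k_{1}$, so at a general regular point we may reduce to $k_{1}=k_{t+1}=\mathrm{Id}$. This proves (5)$\Leftrightarrow$(6). Finally, the equivalence (1)$\Leftrightarrow$(4) is the result quoted from \cite{AG}, closing the circle.

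For the final "almost all" statement, the subspaces $Ad(k_{j})\mathcal{N}_{Z_{j}}$ and $Ad(z_{1}k_{2}z_{2}\cdots k_{j-1}z_{j-1})\mathfrak{k}$ depend real-analytically on $(k_{2},\ldots,k_{t})\in K^{t-1}$. Hence the condition that their sum equals $\mathfrak{p}$, respectively $\mathfrak{g}$, is the non-vanishing of a real-analytic function on $K^{t-1}$, for instance a suitable determinantal minor of the coefficient matrix in a fixed basis. On the compact connected real-analytic manifold $K^{t-1}$ this function either vanishes identically or has zero set of measure zero, yielding the desired dichotomy. I expect the main technical obstacle to be the $Ad$-bookkeeping in the differential of $\Psi$ and its identification with (\ref{diffrange}); once that translation between the geometric spanning condition and the group-theoretic expression in (\ref{diffrange}) is set out cleanly, everything else is standard differential topology and can be cited rather than rederived.
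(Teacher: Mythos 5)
Your proposal follows essentially the same route as the paper: parametrize the orbit sum and the product of double cosets by analytic maps out of powers of $K$, identify the images of the differentials with the spans in (3) and (6), use Sard's theorem for the singular direction, and close the loop between the $\mathfrak{p}$-side and the $G$-side by appealing to \cite{AG}. Two points need tightening. First, the ``standard'' pushforward principle as you state it is false for general smooth maps: if the critical set of $F$ has positive measure in the domain, the pushforward of the smooth measure restricted to that set is a nonzero measure carried by the (Lebesgue-null, by Sard) image, hence a nontrivial \emph{singular} part — so a single regular point does not by itself yield an $L^{1}$ density on all of $N$. What rescues the argument here is that $\Phi$ and $\Psi$ are real-analytic and $K^{t}$, $K^{t+1}$ are connected, so the critical set is either everything or a null set; this is precisely the analyticity dichotomy you invoke in your last paragraph for the ``almost all'' claim, and it must also be invoked at this earlier step (the paper does exactly this: ``if the rank of $f$ equals $\dim G$ at one point, then by an analyticity argument it is equal to $\dim G$ at almost every point'').

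Second, the bridge between the two circles of equivalences: you cite \cite{AG} for (1)$\Leftrightarrow$(4), but Theorem 3.1 of \cite{AG} treats only the case where all the $Z_{j}$ are equal. The paper instead links (3) and (6) directly, observing that the argument in the proof of that theorem — namely that (6) is equivalent to $\mathfrak{k}\oplus sp\{Ad(k_{j})\mathcal{N}_{Z_{j}}\}=\mathfrak{g}$, which since $Ad(k_{j})\mathcal{N}_{Z_{j}}\subseteq\mathfrak{p}$ is equivalent to (3) — goes through verbatim for distinct $Z_{j}$. You should either reproduce that observation or verify the linear-algebra identification of the span in (6) with $\mathfrak{k}\oplus\sum_{j}Ad(\cdot)\mathcal{N}_{Z_{j}}$ yourself; a bare citation of \cite{AG} for the general statement is not quite enough. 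Neither issue changes the architecture of the proof, and both are repairable with material you already have on the page.
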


\begin{proof}
This is an amalgamation of ideas that can mainly be found in \cite{AG} and 
\cite{Ra}. Consider the maps%
\begin{equation*}
F=F_{Z_{1},...,Z_{t}}:O_{Z_{1}}\times \cdot \cdot \cdot \times
O_{Z_{t}}\rightarrow \mathfrak{p}
\end{equation*}%
\begin{equation*}
F(X_{1},...,X_{t})=X_{1}+\cdot \cdot \cdot +X_{t}
\end{equation*}%
and 
\begin{equation*}
f=f_{z_{1},...,z_{t}}:K^{t+1}\rightarrow G
\end{equation*}%
\begin{equation*}
f(k_{1},...,k_{t+1})=k_{1}z_{1}k_{2}\cdot \cdot \cdot k_{t}z_{t}k_{t+1}.
\end{equation*}

We remark that if the rank of $f$ is equal to the dimension of $G$ at one
point, then by an analyticity argument it is equal to $\dim G$ at almost
every point. Ragozin proves that in this case $\nu _{z_{1}}\ast \cdot \cdot
\cdot \ast \nu _{z_{t}}$ is absolutely continuous with respect to Haar
measure, $m_{G},$ on $G$ and that the image of $f,$ the product $%
Kz_{1}Kz_{2}K\cdot \cdot \cdot Kz_{t}K,$ has non-empty interior. But the
range of the differential of $f_{z_{1},...,z_{t}}$ at $(k_{1},...,k_{t+1})$
is the left hand side of (\ref{diffrange}) and hence $rankf=\dim G$ if and
only if (\ref{diffrange}) holds.

Conversely, if $rankf<\dim G$ at all points, then Sard's theorem implies the
measure of the image of $f$ is zero. Hence $m_{G}(Kz_{1}Kz_{2}K\cdot \cdot
\cdot Kz_{t}K)=0$ and this forces $\nu _{z_{1}}\ast \cdot \cdot \cdot \ast
\nu _{z_{t}}$ to be a singular measure. These arguments prove the
equivalence of (4)-(6).

The equivalence of (1)-(3) is similar upon noting that the range of the
differential of $F_{Z_{1},...,Z_{t}}$ at $(X_{1},...,X_{t})$, $X_{j}\in
O_{Zj},$ is $\sum_{j=1}^{t}T_{X_{j}}(O_{Z_{j}})$ and that if $%
X_{j}=Ad(k_{j})Z_{j}$, then $T_{X_{j}}(O_{Z_{j}})=Ad(k_{j})\mathcal{N}%
_{Z_{j}}$ (see \cite{GHJMAA}).

In the proof of Theorem 3.1 of \cite{AG} the authors prove that (\ref%
{diffrange}) holds in the special case that all $Z_{j}$ are equal if and only%
\begin{equation}
\mathfrak{k\oplus }sp\{Ad(k_{j})\mathcal{N}_{Z_{j}}:j=1,...,t\}=\mathfrak{g.}
\label{diff2}
\end{equation}%
But the same argument works for general $Z_{j}$. As $Ad(k_{j})\mathcal{N}%
_{Z_{j}}\subseteq \mathfrak{p}$ for any $k_{j}\in K$, (\ref{diff2}) holds if
and only if property (3) holds, i.e., $sp\{Ad(k_{j})\mathcal{N}%
_{Z_{j}}:j=1,...,t\}=\mathfrak{p.}$
\end{proof}

To show that a convolution product of orbital measures is absolutely
continuous, we will typically establish that property (3) of the proposition
holds.

\begin{notation}
We will call $(Z_{1},...,Z_{m})$ an absolutely continuous tuple if any of
these equivalent conditions are satisfied.
\end{notation}

For emphasis, we highlight:

\begin{corollary}
\label{equiv} $(Z_{1},...,Z_{m})$ is an absolutely continuous tuple if and
only if the orbital measure $\nu _{z_{1}}\ast \cdot \cdot \cdot \ast \nu
_{z_{m}}$ is absolutely continuous on $G$ for $z_{j}=\exp Z_{j}$.
\end{corollary}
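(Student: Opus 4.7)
The plan is to observe that this corollary is essentially a tautological restatement of Proposition \ref{keyprop}, extracted for emphasis because it is the form in which the characterization will be applied throughout the rest of the paper. The preceding \textbf{Notation} declares that a tuple $(Z_1,\dots,Z_m)$ is called absolutely continuous precisely when any (equivalently, all) of the six conditions listed in Proposition \ref{keyprop} hold. The corollary simply isolates the equivalence of conditions (1) and (4) of that proposition, translated through the definition.

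Concretely, I would argue as follows. Suppose $(Z_1,\dots,Z_m)$ is an absolutely continuous tuple. By the notation, this means that condition (1) of Proposition \ref{keyprop} holds, namely that $\mu_{Z_1} \ast \cdots \ast \mu_{Z_m}$ is absolutely continuous with respect to Lebesgue measure on $\mathfrak{p}$. By the equivalence (1) $\Leftrightarrow$ (4) established in that proposition, this is equivalent to the absolute continuity of $\nu_{z_1} \ast \cdots \ast \nu_{z_m}$ on $G$ with respect to Haar measure, where $z_j = \exp Z_j$. Conversely, absolute continuity of $\nu_{z_1} \ast \cdots \ast \nu_{z_m}$ is condition (4), which by the same equivalence yields condition (1), and hence the tuple is absolutely continuous by definition.

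There is really no obstacle here; the only thing to be careful about is to cite the correct equivalence in Proposition \ref{keyprop} (namely (1) $\Leftrightarrow$ (4)) and to note that this holds precisely under the hypothesis $z_j = \exp Z_j$, which is exactly the hypothesis of the corollary. The corollary's purpose is terminological: it licenses us, from this point on, to pass freely between the Lie algebra formulation (orbital measures $\mu_Z$ on $\mathfrak{p}$, used in computing tangent spaces $\mathcal{N}_Z$ via annihilating roots) and the Lie group formulation (double coset measures $\nu_z$ on $G$) without repeatedly invoking the full proposition.
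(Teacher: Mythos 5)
Your proposal is correct and matches the paper exactly: the paper offers no separate proof, presenting the corollary as an immediate consequence of the equivalence (1) $\Leftrightarrow$ (4) in Proposition \ref{keyprop} together with the preceding Notation defining an absolutely continuous tuple. Your careful unpacking of that equivalence is precisely the intended (tautological) argument.
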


\section{Statement of the Characterization Theorem}

\bigskip

\subsection{Type and eligibility}

As in \cite{GHAbs}, our theorem will depend upon what we call type and
eligibility. Here we modify those definitions for the symmetric space
scenario.

\subsubsection{Type of an element}

When the restricted root system of the symmetric space is of type $A_{n-1}$
(we also call this type $SU(n)$ as this is the classical Lie group whose
root system is type $A_{n-1}$), after applying a suitable Weyl conjugate any 
$Z$ $\in \mathfrak{a}_{n}$ can be identified with the $n$-vector 
\begin{equation*}
Z=(\underbrace{a_{1},\dots ,a_{1}}_{s_{1}},\underbrace{a_{2},\dots ,a_{2}}%
_{s_{2}},\dots ,\underbrace{a_{m},\dots ,a_{m}}_{s_{m}}),
\end{equation*}%
where the $a_{j}\in \mathbb{R}$ are distinct and $\sum_{j=1}^{m}s_{j}a_{j}=0$%
. The set of annihilating roots $\Phi _{Z}=$ $\Psi _{1}\cup \cdots \cup \Psi
_{m}$ where 
\begin{align*}
\Psi _{1}^{+}& =\{e_{i}-e_{j}:1\leq i<j\leq s_{1}\}\text{ and} \\
\Psi _{l}^{+}& =\{e_{i}-e_{j}:s_{1}+\cdots +s_{l-1}<i<j\leq s_{1}+\cdots
+s_{l}\}\text{ for }l>1.
\end{align*}%
Following \cite{GHMathZ}, we say that $Z$ is\textit{\ type} $SU(s_{1})\times
\cdots \times SU(s_{m})$ as this is the Lie type of its set of annihilating
roots.

If the restricted root system is type $B_{n}$, $C_{n}$, $D_{n}$ or $BC_{n}$,
then up to a Weyl conjugate, $Z\in \mathfrak{a}_{n}$ can be identified with
the $n$-vector 
\begin{equation*}
Z=(\underbrace{0,\dots ,0}_{J},\underbrace{a_{1},\dots ,a_{1}}_{s_{1}},\dots
,\underbrace{a_{m},\dots ,(\pm )a_{m}}_{s_{m}})
\end{equation*}%
where the $a_{j}>0$ are distinct. We remark that the minus sign is needed
only in type $D_{n}$ and only if $J=0$. (This is because the Weyl group in
type $D_{n}$ changes only an even number of signs.)

The set of annihilating roots of $Z$ can be written as $\Phi _{Z}=\Psi
_{0}\cup \Psi _{1}\cdots \cup \Psi _{m}$ where 
\begin{equation*}
\Psi _{0}^{+}=\left\{ 
\begin{array}{cc}
\{e_{k},e_{i}\pm e_{j}:1\leq i,j,k\leq J,i<j\}\text{ } & \text{if }\Phi 
\text{ type }B_{n} \\ 
\{2e_{k},e_{i}\pm e_{j}:1\leq i,j,k\leq J,i<j\}\text{ } & \text{if }\Phi 
\text{ type }C_{n} \\ 
\{e_{i}\pm e_{j}:1\leq i<j\leq J\} & \text{if }\Phi \text{ type }D_{n} \\ 
\{e_{k},2e_{k},e_{i}\pm e_{j}:1\leq i,j,k\leq J,i<j\} & \text{if }\Phi \text{
type }BC_{n}%
\end{array}%
\right.
\end{equation*}%
and for $l\geq 1$,%
\begin{equation*}
\Psi _{l}^{+}=\{e_{i}-e_{j}:J+s_{1}+\cdots +s_{l-1}<i<j\leq J+s_{1}+\cdots
+s_{l}\},
\end{equation*}%
except if $Z=(a_{1},\dots ,a_{1},\dots ,a_{m},\dots ,-a_{m})$ in $D_{n}$
when 
\begin{equation*}
\Psi _{m}^{+}=\{e_{i}-e_{j},e_{i}+e_{n}:n-s_{m}<i<j\leq n-1\}.
\end{equation*}

In the case that $\Phi $ is type $B_{n}$, we will say that such an element $%
Z $ is \textit{type} 
\begin{equation*}
B_{J}\,\times SU(s_{1})\times \cdots \times SU(s_{m})\text{ }
\end{equation*}%
as this is the Lie type of $\Phi _{Z}$. We make a similar definition if $%
\Phi $ is type $C_{n}$, $D_{n}$ or $BC_{n}$. We understand $SU(1)$ and $%
B_{0} $ to be empty, $B_{1}$ to be the subsystem $\{e_{1}\}$ and define $%
C_{0}$, $BC_{0}$, $C_{1}$ and $BC_{1}$ similarly. In the case of type $%
D_{n}, $ we understand both $D_{0}$ and $D_{1}$ to be empty and $D_{2}$ to
be $\{e_{i}\pm e_{j}\}$. We often omit the writing of the empty root systems
in our descriptions.

Note that there are two distinct subsystems (up to Weyl conjugacy) of
annihilating roots of elements of type $SU(n)$ in $D_{n}$.

\subsubsection{Dominant type}

Suppose the symmetric space has restricted root system of type $B_{n}$ and $%
Z\in a_{n}$ is type $B_{J}\,\times SU(s_{1})\times \cdots \times SU(s_{m})$.
We will say $Z$ is \textit{dominant }$B$\textit{\ type} if $2J\geq \max
s_{j} $, and is \textit{dominant }$SU$\textit{\ type} otherwise. We define%
\textit{\ dominant }$C$\textit{, }$D$\textit{\ and }$BC$\textit{\ type}
similarly for $Z$ in a symmetric space with restricted root system of type $%
C_{n},D_{n} $ or $BC_{n}$.

\subsubsection{Eligible and Exceptional Tuples}

\begin{notation}
If $Z$ is of type $SU(s_{1})\times \cdots \times SU(s_{m})$ in a symmetric
space with restricted root system of type $A_{n}$, put $S_{X}=\max s_{j}$.

If $Z$ is type $B_{J}\times SU(s_{1})\times \cdots \times SU(s_{m})$ in a
symmetric space with restricted root system of type $B_{n}$, put 
\begin{equation*}
S_{X}=%
\begin{cases}
2J & \text{ if }X\text{ is dominant }B\text{ type} \\ 
\max s_{j} & \text{else}%
\end{cases}%
.
\end{equation*}%
Define $S_{X}$ similarly when $Z$ is in a symmetric space with restricted
root system of type $\,C_{n},D_{n}$ or $BC_{n}$.
\end{notation}

\begin{definition}
(1) We will say that the $L$-tuple $(Z_{1},Z_{2},\dots ,Z_{L})\in \mathfrak{a%
}^{L}$ in a symmetric space with restricted root system of type $A_{n}$ is 
\textbf{eligible} if 
\begin{equation*}
\sum_{i=1}^{L}S_{X_{i}}\leq (L-1)(n+1).
\end{equation*}

(2) We will say that the $L$-tuple $(Z_{1},Z_{2},\dots ,Z_{L})$ $\in 
\mathfrak{a}^{L}$ in a symmetric space with restricted root system of type $%
B_{n}$, $C_{n},$ $D_{n}$ or $BC_{n}$ is \textbf{eligible} if 
\begin{equation}
\sum_{i=1}^{L}S_{X_{i}}\leq (L-1)2n.  \label{eligiblecriteria}
\end{equation}
\end{definition}

\begin{definition}
We will say that $(Z_{1},Z_{2},\dots ,Z_{L})\in \mathfrak{a}^{L}$ is an 
\textbf{exceptional tuple} in any of the following situations:

\begin{enumerate}
\item The symmetric space has restricted root system of type $A_{2n-1}$, $%
L=2 $, $n\geq 2$ and $Z_{1}$ and $Z_{2}$ are both of type $SU(n)\times SU(n)$%
;

\item The symmetric space has restricted root system of type $D_{n}$, $L=2$, 
$Z_{1}$ is type $SU(n)$ and $Z_{2}$ is either type $SU(n)$ or type $SU(n-1)$;

\item The symmetric space has restricted root system of type $D_{4}$, $L=2$, 
$Z_{1}$ is type $SU(4)$ and $Z_{2}$ is either type $SU(2)\times SU(2)$ and $%
\Phi _{Z_{2}}$ is Weyl conjugate to a subset of $\Phi _{Z_{1}}$, or $Z_{2}$
is type $SU(2)\times D_{2}$;

\item The symmetric space has restricted root system of type $D_{n}$, $n=3$
or $4,$ $L=3$ and $Z_{1},Z_{2},Z_{3}$ are all of type $SU(n)$ with Weyl
conjugate sets of annihilators in the case of $n=4$.
\end{enumerate}
\end{definition}

\subsection{Main Result}

Our main result is that other than for the exceptional tuples, eligibility
characterizes absolute continuity of the convolution product. The proof of
this theorem will occupy most of the remainder of the paper. Here is the
formal statement of the theorem.

\begin{theorem}
\label{main}Let $G/K$ by a symmetric space of type $III$ and suppose $%
Z_{j}\in \mathfrak{a}$, $Z_{j}\neq 0$ for $j=1,2,\dots ,L$ and $L\geq 2$.
The orbital measure $\mu _{Z_{1}}\ast \mu _{Z_{2}}\ast \cdots \ast \mu
_{Z_{L}}$ is absolutely continuous with respect to Lebesgue measure on $%
\mathfrak{p}$ if and only if $(Z_{1},Z_{2},\dots ,Z_{L})$ is eligible and
not exceptional.
\end{theorem}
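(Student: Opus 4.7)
The plan is to apply Proposition \ref{keyprop} to reduce the theorem to the geometric assertion that $(Z_1,\dots,Z_L)$ is absolutely continuous if and only if there exist $k_2,\dots,k_L \in K$ (with $k_1 = Id$) satisfying $\mathfrak{p} = \sum_{j=1}^L Ad(k_j)\mathcal{N}_{Z_j}$. This immediately splits the proof into two halves: a necessity direction, that ineligibility or an exceptional configuration prevents such a span from existing for any choice of the $k_j$, and a sufficiency direction, that eligibility plus non-exceptionality lets us exhibit a concrete spanning tuple. As the introduction signals, necessity (section 5) is considerably shorter than sufficiency (section 4), so I would dispatch the former first.

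For necessity, the ineligibility half is essentially a dimension count. Using $\dim \mathcal{N}_Z = \sum_{\alpha \in \Phi^+ \setminus \Phi_Z} \dim \mathfrak{g}_\alpha$ together with the appendix classification data, one checks that $\dim \mathcal{N}_Z$ decreases linearly in the largest isotropy block size $S_Z$; the eligibility inequality is then exactly equivalent to $\sum_j \dim Ad(k_j)\mathcal{N}_{Z_j} \geq \dim \mathfrak{p}$, so an ineligible tuple cannot possibly span $\mathfrak{p}$. The four exceptional cases are subtler, since the dimension bound alone permits spanning; for each I would locate a proper subspace of $\mathfrak{p}$ invariant under the relevant conjugations that contains every $\sum_j Ad(k_j)\mathcal{N}_{Z_j}$. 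In case (1), the two equal blocks inside $A_{2n-1}$ force every contributing root space into a fixed hyperplane, and in cases (2)--(4) I would exploit the low-rank $D$-type accidents (outer automorphism, triality at $D_4$, $D_3 \cong A_3$) to pin down an analogous invariant subspace.

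Sufficiency is the core of the paper. I would case-split by the type of the restricted root system ($A_n$, $B_n$, $C_n$, $D_n$, $BC_n$). For each $Z_j$, put it in standard form so that $\Phi_{Z_j}$ is read off as a concrete subsystem, and then, adapting the combinatorial scheme of \cite{GHAbs}, choose Weyl group representatives $w_j \in K$ so that the union $\bigcup_j w_j\bigl(\Phi^+ \setminus \Phi_{Z_j}\bigr)$ exhausts $\Phi^+$ with sufficient multiplicity. The eligibility bound $\sum S_{Z_j} \leq (L-1)\cdot 2n$ (or $(L-1)(n+1)$ in type $A$) is precisely the combinatorial budget needed for such an assignment to exist, provided the tuple avoids each of the exceptional configurations. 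Once all the root-direction subspaces $\mathfrak{g}_\alpha^-$ are captured, the Cartan part $\mathfrak{a}$ is recovered by a small perturbation: by the last assertion of Proposition \ref{keyprop}, if the spanning condition holds at one $(k_2,\dots,k_L) \in K^{L-1}$ it holds on an open dense set, so generically perturbing off the Weyl lifts picks up the missing $\mathfrak{a}$-directions.

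The main obstacle, relative to the compact Lie algebra treatment of \cite{GHAbs}, is the presence of restricted root spaces with $\dim \mathfrak{g}_\alpha > 1$ (with dimensions ranging up to $8$ in cases such as $AIII$, $AII$, $CII$, $DIII$ and $BI$). A pure root-index combinatorial argument then covers only the index $\alpha$, not every vector direction inside $\mathfrak{g}_\alpha^-$; to fix this one must exploit the action of the centralizer $\mathfrak{m}$ on each root space to spread the Weyl conjugates across all directions of $\mathfrak{g}_\alpha$, or else replace the Weyl-group choice of $k_j$ by a genuinely $K$-valued one and re-verify the combinatorial budget. A secondary hurdle is pinning down the obstructions in the exceptional cases, where the outer-automorphism and triality phenomena in $D_n$ for small $n$ produce several Weyl-inequivalent $SU(n)$-types, each of which must be checked separately.
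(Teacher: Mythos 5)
Your reduction to Proposition \ref{keyprop}(3) is the right starting point, but both halves of the plan have concrete problems. For necessity of eligibility, the claimed equivalence between the eligibility inequality and the dimension count $\sum_j \dim Ad(k_j)\mathcal{N}_{Z_j}\geq \dim\mathfrak{p}$ is false in both directions: $\dim\mathcal{N}_Z$ depends on \emph{all} the block sizes of $Z$, not only on $S_Z$. For instance, in $A_5$ a pair of types $(SU(4),SU(3))$ is ineligible ($4+3>6$) yet $\dim\mathcal{N}_{Z_1}+\dim\mathcal{N}_{Z_2}=9+12=21>20=\dim\mathfrak{p}$, so no tangent-space dimension count can rule it out; conversely the exceptional pair of type $SU(3)\times SU(3)$ in $A_5$ is eligible but fails the dimension count. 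The paper's actual argument is an eigenvalue argument: realizing the $Z_j$ as matrices, the largest eigenspaces $V_j$ of $Ad(k_j)Z_j$ have dimensions governed by $S_{Z_j}$, ineligibility forces $\dim\bigcap_j V_j\geq 1$ (or $\geq q-p+1$ for the zero eigenvalue in the $BDI$-type cases), so every element of $\sum_j O_{Z_j}$ has a forced common eigenvalue of excessive multiplicity and the sum has empty interior. Your invariant-subspace idea for the exceptional tuples is closer in spirit to what is needed; the paper uses a raw dimension count for $(SU(n),SU(n-1))$ in $D_n$, the $D_3\cong A_3$ identification to convert the $D_4$ exceptional tuples into ineligible ones, and a prior result for the $A_{2n-1}$ pairs.

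On sufficiency, your combinatorial Weyl-assignment scheme is the engine of the proof, but two structural devices you would need are missing. First, the higher-multiplicity root spaces, which you correctly flag as the main obstacle, are not handled by acting with $\mathfrak{m}$; the paper instead proves an embedding lemma showing that each higher-multiplicity symmetric space contains an embedded multiplicity-one space of the same rank with the same maximal flat (e.g.\ $AI\hookrightarrow AII$, $BI(q=p+1)\hookrightarrow BDI, AIII, CII$, $DI(p=q)\hookrightarrow AIII\hookrightarrow CII$), and that absolute continuity transfers upward via the map $\mathcal{A}$; only a handful of tuples that are exceptional downstairs but not upstairs need separate treatment via the Wright-type criterion. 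Second, your recovery of the $\mathfrak{a}$-directions by citing the ``almost all'' clause of Proposition \ref{keyprop} is circular: that clause applies only once the spanning condition is known to hold at some point, which is exactly what is at issue. The paper's general strategy does this correctly by an explicit perturbation $Ad(\exp sM)$ with $M=E_\alpha^{+}$ chosen so that the bracket relation $[E_\alpha^{+},E_\alpha^{-}]\in\mathfrak{a}\setminus\{0\}$ supplies the missing direction, embedded in a rank induction on $n$ (reducing $(X,Y)$ to $(X',Y')$) whose nontrivial base cases ($D_3$, $D_4$, $D_5$, and the $L\geq 3$ analogues) are settled by the separate Wright-type sufficient condition. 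Without the embedding reduction, the rank induction, and a correct necessity argument, the plan does not close.
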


\begin{corollary}
Let $Z_{j}\in \mathfrak{a}$, $Z_{j}\neq 0$ for $j=1,2,\dots ,L$ and $L\geq 2$%
, and let $z_{j}=\exp Z_{j}$. The orbital measure on $G$, $\nu _{z_{1}}\ast
\cdots \ast \nu _{z_{L}},$ is absolutely continuous with respect to Haar
measure on $G$ if and only if $(Z_{1},Z_{2},\dots ,Z_{L})$ is eligible and
not exceptional.
\end{corollary}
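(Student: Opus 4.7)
The plan is very short: this corollary is essentially a restatement of Theorem \ref{main} on the group side rather than the algebra side, so all of the work has already been packaged into Proposition \ref{keyprop}. Specifically, I would invoke the equivalence of conditions (1) and (4) in Proposition \ref{keyprop}, which asserts that for $Z_j \in \mathfrak{a}$ and $z_j = \exp Z_j$, the convolution $\mu_{Z_1} \ast \cdots \ast \mu_{Z_L}$ is absolutely continuous with respect to Lebesgue measure on $\mathfrak{p}$ if and only if $\nu_{z_1} \ast \cdots \ast \nu_{z_L}$ is absolutely continuous with respect to Haar measure on $G$. This is exactly the content highlighted in Corollary \ref{equiv}.

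Given this equivalence, I would then simply quote Theorem \ref{main}: the $\mathfrak{p}$-side convolution is absolutely continuous if and only if $(Z_1, \ldots, Z_L)$ is eligible and not exceptional. Combining the two statements yields the group-level characterization. Since the notions of eligibility and exceptionality are defined purely in terms of the $Z_j \in \mathfrak{a}$ (via the annihilating root systems $\Phi_{Z_j}$ and the numbers $S_{Z_j}$), no translation of these combinatorial conditions is needed when passing from $\mathfrak{p}$ to $G$.

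There is essentially no obstacle here: the corollary is a formal consequence of the main theorem together with the bridge already established between orbit sums in $\mathfrak{p}$ and double-coset products in $G$. In fact the proof reduces to a one-line citation, so the only care required is to remark explicitly that the hypotheses $z_j = \exp Z_j$ and $Z_j \in \mathfrak{a}$ are precisely what is needed to apply Proposition \ref{keyprop}, and that every double coset $K z K$ meets $A = \exp \mathfrak{a}$ (by $G = KAK$) so no generality is lost in assuming $z_j \in A$ in the first place.
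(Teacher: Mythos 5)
Your proposal is correct and matches the paper's own argument exactly: the paper also deduces the corollary immediately from Theorem \ref{main} together with Corollary \ref{equiv} (i.e., the equivalence of conditions (1) and (4) in Proposition \ref{keyprop}). Nothing further is needed.
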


\begin{proof}
The proof is immediate from the Theorem and Cor. \ref{equiv}.
\end{proof}

\begin{remark}
The characterization of absolute continuity for pairs of orbital measures, $%
\nu _{x}\ast \nu _{y},$ was established by Gracyzk and Sawyer for the Type $%
III$ symmetric spaces of Cartan types $AI$ and $AII$ in \cite{GSLie} and for
the Cartan types $AIII$, $CII$ and $BDI$ in \cite{GSColloq} and \cite%
{GSArxiv}. They use an induction argument, but how it is applied depends
upon the particular symmetric space.

We will give a complete proof of sufficiency for all Cartan types and all $%
L\geq 2$. As with Gracyzk and Sawyer, we also use an induction argument, but
it relies upon the Lie type of the restricted root space rather than the
symmetric space itself. In fact, it is the combinatorial structure of the
root systems and root vectors that is key to our approach.
\end{remark}

\begin{corollary}
\label{A}(1) If $G/K$ is a symmetric space of Cartan type $AI$ or $AII$, of
rank $n$ (hence the restricted root system is type $A_{n}$), then the
convolution of any $n+1$ orbital measures (on $G$ or $\mathfrak{p}$) is
absolutely continuous. Moreover, this is sharp since any $n$-tuple of
elements all of type $SU(n)$ is not absolutely continuous. Furthermore,
these are the only $n$-tuples that fail to be absolutely continuous.

(2) If $G/K$ is a symmetric space of rank $n$ whose restricted root system
is not type $A_{n}$ or type $D_{3},$ then the convolution of any $n$ orbital
measures (on $G$ or $\mathfrak{p}$) is absolutely continuous. This is sharp
since any $(n-1)$-tuple of elements of type $B_{n-1}$ (or $C_{n-1}$, $%
D_{n-1} $, $BC_{n-1}$ depending on the restricted root system) is not
absolutely continuous. Furthermore, except in type $D_{4}$, these are the
only $(n-1)$-tuples that fail to be absolutely continuous.
\end{corollary}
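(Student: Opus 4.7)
The plan is to read off both parts of the corollary directly from Theorem \ref{main}, which characterises absolute continuity by the two combinatorial conditions of eligibility and non-exceptionality. The proof reduces to (i) bounding $\sum_{i=1}^{L}S_{Z_{i}}$ against $(L-1)(n+1)$ or $(L-1)\cdot 2n$, and (ii) ruling out the exceptional configurations for the relevant $L$ and root system.

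For (1), observe that in restricted root system $A_{n}$ any nonzero $Z\in\mathfrak{a}$, written as $(a_{1}^{s_{1}},\dots,a_{m}^{s_{m}})$ with distinct $a_{j}$ satisfying $\sum s_{j}a_{j}=0$, forces $m\geq 2$ and hence $S_{Z}=\max s_{j}\leq n$. For an $(n+1)$-tuple this gives $\sum S_{Z_{i}}\leq n(n+1)=(L-1)(n+1)$, so eligibility holds; and since exceptional case (1) requires $L=2$, no $(n+1)$-tuple with $n\geq 2$ is exceptional, so Theorem \ref{main} delivers absolute continuity. For sharpness at $L=n$, a tuple of $n$ type-$SU(n)$ elements contributes $\sum S_{Z_{i}}=n^{2}>n^{2}-1=(L-1)(n+1)$, violating eligibility. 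Conversely, any other $n$-tuple has some $S_{Z_{i}}\leq n-1$ and so $\sum\leq n^{2}-1$, giving eligibility; the only potentially applicable exceptional case (1) would require $A_{n}=A_{2m-1}$ with $L=2$, which cannot coincide with $L=n\geq 2$, so non-exceptionality also holds and the tuple is AC.

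For (2), the analogue is that in root systems $B_{n},C_{n},D_{n},BC_{n}$ the maximum of $S_{Z}$ over nonzero $Z\in\mathfrak{a}$ is $2(n-1)$, attained by the elements of type $B_{n-1}$ (resp.\ $C_{n-1}$, $D_{n-1}$, $BC_{n-1}$) having $J=n-1$ and one further block of multiplicity one. For an $n$-tuple, $\sum S_{Z_{i}}\leq 2n(n-1)=(L-1)\cdot 2n$, so eligibility holds. The exceptional tuples live only in $D$-type systems and require $L\in\{2,3\}$; as $D_{3}$ is excluded from our hypotheses, the $D_{n}$ case forces $n\geq 4$ and hence $L=n\geq 4$, and the non-$D$ cases carry no exceptional configurations at all. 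Theorem \ref{main} then yields absolute continuity.

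For sharpness and the converse at $L=n-1$, a tuple of $(n-1)$ max-type elements gives $\sum S_{Z_{i}}=2(n-1)^{2}=2n(n-2)+2>(L-1)\cdot 2n$, so eligibility fails and the tuple is not AC. In the reverse direction (for $n\geq 4$), any non-max $Z$ satisfies $S_{Z}\leq 2n-4$ --- either $S_{Z}=2J$ is even with $J\leq n-2$, or $S_{Z}=\max s_{j}\leq n\leq 2n-4$ --- so replacing any single max element by a non-max $Z$ drops $\sum$ down to at most $(2n-4)+(n-2)\cdot 2(n-1)=2n(n-2)=(L-1)\cdot 2n$, restoring eligibility. Non-exceptionality then holds automatically for $B_{n},C_{n},BC_{n}$ with $n\geq 4$ and for $D_{n}$ with $n\geq 5$; only in $D_{4}$ does exceptional case (4) contribute the extra family of non-AC $(n-1)$-triples of type $SU(4)$ with Weyl-conjugate annihilator sets, giving exactly the $D_{4}$ exception flagged in the statement. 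The main obstacle is this last case analysis on $S_{Z}$-values just below the maximum (and a careful check of the small-$n$ boundary); everything else is routine bookkeeping on top of Theorem \ref{main}.
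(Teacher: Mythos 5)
Your proposal is correct and follows exactly the paper's (one-line) proof: everything is read off from Theorem \ref{main} by checking eligibility and non-exceptionality, and your bookkeeping in both directions is what that check yields. The only loose end is the small-rank boundary you defer in part (2): for $n=3$ in types $B_{3}$, $C_{3}$, $BC_{3}$ your bound $S_{Z}\le 2n-4$ for non-maximal $Z$ fails (an $SU(3)$-type element has $S_{Z}=3$), so the pair of types $(B_{2},SU(3))$ is also non-eligible and hence not absolutely continuous --- a caveat to the corollary's final claim at $n=3$ rather than to your argument for $n\ge 4$.
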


We remind the reader that when we speak of an $L$-tuple of elements being
absolutely continuous, we mean that the convolution of their corresponding
orbital measures is absolutely continuous.

\begin{proof}
In both cases, just check the eligibility and non-exceptionality criterion.
\end{proof}

\begin{remark}
We remark that this corollary partially improves upon \cite{GSJFA} where it
was shown that in any symmetric space the convolution of $rank+1$ orbital
measures is absolutely continuous and that in the symmetric space with
restricted root system of type $A_{n}$, the $n$-fold convolution of the
orbital measure $\mu _{X},$ where $X$ is type $SU(n)$ is not absolutely
continuous.

This corollary also answers Conjecture 10 of \cite{GSJFA} negatively.
\end{remark}

A $K$-bi-invariant measure $\mu $ on $G$ is said to be \textit{continuous}
if $\mu (gK)=0$ for all $g\in G$. Ragozin in \cite{Ra} proved that the
convolution of any $\dim G/K$ continuous $K$-bi-invariant measures is
absolutely continuous. This too can be improved.

\begin{corollary}
If $G/K$ is a symmetric space of rank $n$, then the convolution of any $n$
(resp., $n+1$) continuous $K$-bi-invariant measures on $G$ is absolutely
continuous if the restricted root system is not type $A_{n}$ or type $D_{3}$
(resp., if the restricted root system is type $A_{n}$ or $D_{3}$).
\end{corollary}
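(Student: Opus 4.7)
The strategy is to disintegrate each continuous $K$-bi-invariant measure as an average of orbital measures and then invoke Corollary \ref{A} (and Theorem \ref{main} in the $D_3$ case). For any $K$-bi-invariant probability measure $\mu$ on $G$, set $\widetilde{\mu} := \mathcal{A}_{\ast}\mu$, the push-forward under the Cartan projection $\mathcal{A}:G\to\overline{\mathfrak{a}^+}$. Testing $\mu$ against a continuous compactly-supported $f$ and using bi-invariance to replace $f$ by its double $K$-average, which depends on $g$ only through $\mathcal{A}(g)$ and equals $\int_G f\,d\nu_{\exp\mathcal{A}(g)}$, gives the polar-type identity $\mu=\int_{\overline{\mathfrak{a}^+}}\nu_{\exp H}\,d\widetilde{\mu}(H)$. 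Applied to each $\mu_i$ and combined with the bilinearity of convolution,
\[
\mu_1\ast\cdots\ast\mu_L=\int\cdots\int \nu_{\exp H_1}\ast\cdots\ast\nu_{\exp H_L}\,d\widetilde{\mu}_1(H_1)\cdots d\widetilde{\mu}_L(H_L).
\]
Consequently, if for $(\widetilde{\mu}_1\otimes\cdots\otimes\widetilde{\mu}_L)$-almost every tuple the orbital convolution is absolutely continuous, then a Fubini argument (testing against the indicator of any Haar-null set) shows $\mu_1\ast\cdots\ast\mu_L$ is absolutely continuous.

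The continuity hypothesis is then translated into the absence of an atom of $\widetilde{\mu}_i$ at $0$. Since $\mathfrak{a}\subseteq\mathfrak{p}$ is disjoint from $\mathfrak{k}$ one has $A\cap K=\{e\}$, and a direct computation shows that for $H, H'\in\overline{\mathfrak{a}^+}$,
\[
\nu_{\exp H}(\exp H'\cdot K)=\begin{cases}1 & \text{if } H=H'=0,\\ 0 & \text{otherwise},\end{cases}
\]
using that two distinct equidimensional real-analytic submanifolds of $G$ intersect in a lower-dimensional set. By bi-invariance, $\mu(gK)=\mu(\exp\mathcal{A}(g)\cdot K)$, which by the displayed formula equals $\widetilde{\mu}(\{0\})$ when $\mathcal{A}(g)=0$ and vanishes otherwise. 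Hence continuity of $\mu$ is equivalent to $\widetilde{\mu}(\{0\})=0$, and under the hypothesis the set of $L$-tuples with some $H_i=0$ has $(\widetilde{\mu}_1\otimes\cdots\otimes\widetilde{\mu}_L)$-measure zero.

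It remains to invoke Corollary \ref{A}. When the restricted root system is not type $A_n$ or $D_3$ and $L=n$, part (2) gives absolute continuity of $\nu_{\exp H_1}\ast\cdots\ast\nu_{\exp H_n}$ for every tuple of nonzero elements. When the restricted root system is type $A_n$ and $L=n+1$, part (1) gives the same conclusion. When the restricted root system is type $D_3$ and $L=n+1=4$, Theorem \ref{main} applies: the maximum of $S_X$ over nonzero elements in type $D_3$ is $4$ (attained by type $D_2$), so $\sum S_{X_i}\leq 4\cdot 4=16<18=(L-1)\cdot 2n$ forces eligibility, and $L=4$ precludes all four exceptional configurations. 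In each case the integrand in the displayed convolution formula is absolutely continuous for almost every tuple, and the Fubini argument of the first paragraph finishes the proof.

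The only genuinely delicate step is the equivalence of continuity of $\mu$ with atomlessness of $\widetilde{\mu}$ at $0$; this rests on the observation that an orbital measure $\nu_{\exp H}$ with $H\neq 0$ carries no mass on any single left $K$-coset, which in turn reduces to $A\cap K=\{e\}$ and a dimension count. Everything else is a direct packaging of Corollary \ref{A} and Theorem \ref{main}.
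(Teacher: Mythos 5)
Your proof is correct, but it follows a genuinely different route from the paper's. The paper disposes of this corollary in two lines by citing Ragozin \cite{Ra}: there it is shown that if for every tuple $Z_{1},\dots ,Z_{m}\in \mathfrak{a}$ one has $sp\{Ad(k_{j})\mathcal{N}_{Z_{j}}:j=1,\dots ,m\}=\mathfrak{p}$ for almost all $(k_{1},\dots ,k_{m})\in K^{m}$, then any $m$ continuous $K$-bi-invariant measures convolve absolutely continuously; the ``almost all'' clause of Prop.~\ref{keyprop} combined with Theorem~\ref{main} then furnishes that hypothesis with $m=n$ (resp.\ $n+1$). You instead re-derive the reduction to orbital measures from scratch: the disintegration $\mu =\int \nu _{\exp H}\,d\widetilde{\mu }(H)$, the identification of continuity of $\mu $ with $\widetilde{\mu }(\{0\})=0$, and Fubini. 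This buys self-containedness --- you need only the qualitative conclusion of Corollary~\ref{A} for each fixed tuple (plus, for $D_{3}$, a direct eligibility and non-exceptionality check on $4$-tuples, which is indeed necessary since Corollary~\ref{A} does not cover that case, and which you carry out correctly) rather than Ragozin's genericity statement. The one step worth tightening is the justification of $\nu _{\exp H}(\exp H^{\prime }K)=0$ for $(H,H^{\prime })\neq (0,0)$: the clean computation is $\nu _{\exp H}(gK)=m_{K}\left( K\cap gK\exp (-H)\right) $, and when nonempty this set is a left translate of the closed subgroup $K\cap \exp (H)K\exp (-H)$, which is proper for $H\neq 0$ (if $\exp H$ normalized $K$ then $e^{ad(H)}$ would preserve $\mathfrak{k}$, forcing $\alpha (H)=0$ for every restricted root and hence $H=0$) and therefore $m_{K}$-null; the two equidimensional analytic submanifolds being compared are $K$ and $\exp H^{\prime }K\exp (-H)$, not the double coset and the left coset. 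With that small repair the argument is complete and valid.
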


\begin{proof}
In \cite{Ra} it was actually shown that if for each $Z_{1},...,Z_{m}\in 
\mathfrak{a}_{n}$, $sp\{Ad(k_{j})\mathcal{N}_{Z_{j}}:j=1,...,m\}=\mathfrak{p}
$ for almost all $k_{j}\in K$, then any $m$ continuous $K$-bi-invariant
measures on $G$ is absolutely continuous. From the Theorem and Prop. \ref%
{keyprop} we know this holds with $m=n$.
\end{proof}

Most of the remainder of the paper will be aimed at proving this theorem.
The proof is organized as follows. We focus first on sufficiency. We begin
by showing that the problem can largely be reduced to the study of the
problem on the symmetric spaces whose restricted root spaces are all of
dimension one. For these spaces we give an induction argument; this is is
the key combinatorial idea that was also used in the study of the analogous
problem for convolutions of orbital measures in the classical Lie algebras
(see \cite{GHAbs}). We will apply this first to the problem of convolving
two orbital measures and then will show how to handle more than two
convolutions.

Of course, an induction argument can only be used if we can establish the
base case(s). Some of these cases are non-trivial and for those we prove
another sufficient combinatorial condition that was motivated by a result in 
\cite{Wr}.

In passing from the symmetric spaces with one dimensional, restricted root
spaces to the other symmetric spaces, there are a few special cases of $L$%
-tuples that we will also need to handle using this other sufficient
condition.

We then turn to necessity. The necessity of eligibility will be seen to
follow from elementary linear algebra arguments. For the exceptional tuples,
we need other reasoning. The simple fact that the dimension of the
underlying orbits are simply not large enough to have a chance to satisfy
Prop. \ref{keyprop}(3) can often be used.

\section{Proof of Sufficiency}

\subsection{Reduction to multiplicities one problems}

We begin the proof of sufficiency by showing we can focus our attention
primarily on the symmetric spaces whose restricted root spaces are all of
dimension one. The first lemma seems to be known, but we could not find a
proof in the literature.

\begin{lemma}
\label{openinA}Let $G/K$ be a symmetric space and $x_{1},....,x_{m}\in A$.
Then $\nu _{x_{1}}\ast \cdot \cdot \cdot \ast \nu _{x_{m}}$ is absolutely
continuous on $G$ if and only if $\mathcal{A}(x_{1}Kx_{2}\cdot \cdot \cdot
Kx_{m})$ has non-empty interior in $A$.
\end{lemma}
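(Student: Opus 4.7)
The plan is to leverage the equivalence of (4) and (5) in Proposition~\ref{keyprop}, which says that absolute continuity of $\nu_{x_{1}}\ast \cdots \ast \nu_{x_{m}}$ is equivalent to $T := Kx_{1}Kx_{2}\cdots Kx_{m}K$ having non-empty interior in $G$. Setting $S = x_{1}Kx_{2}\cdots Kx_{m}$ we have $T = KSK$, and because $\mathcal{A}$ is constant on the double cosets $KgK$, one has $\mathcal{A}(T) = \mathcal{A}(S)$. So the lemma reduces to the general statement that a $K$-bi-invariant set $T \subseteq G$ has non-empty interior in $G$ if and only if $\mathcal{A}(T) \subseteq \overline{A^{+}}$ has non-empty interior in $A$.

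The engine of both directions will be the assertion that the multiplication map
\[
\Phi : K \times A^{+} \times K \longrightarrow G, \qquad \Phi(k_{1},a,k_{2}) = k_{1}ak_{2},
\]
is a submersion with open dense image $KA^{+}K$, so that the induced map $\mathcal{A}|_{KA^{+}K} : KA^{+}K \to A^{+}$ (the projection onto the middle factor) is itself a submersion and hence an open map. Granting this, the forward direction is immediate: given a non-empty open $U \subseteq T$, intersect with the open dense set $KA^{+}K$ to obtain a non-empty open $U' = U \cap KA^{+}K$; then $\mathcal{A}(U')$ is open in $A^{+}$, hence in $A$, and $\mathcal{A}(U') \subseteq \mathcal{A}(T) = \mathcal{A}(S)$.

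For the converse, if $\mathcal{A}(S)$ has non-empty interior in $A$ then that interior necessarily lies in $A^{+}$, since $\mathcal{A}(S) \subseteq \overline{A^{+}}$ and $A^{+}$ is the interior of $\overline{A^{+}}$ in $A$. Pick a non-empty open $V \subseteq A^{+} \cap \mathcal{A}(S)$. Each $a \in V$ equals $\mathcal{A}(s)$ for some $s \in S$, so $a \in KsK \subseteq KSK = T$; thus $V \subseteq T$, which gives $\Phi(K \times V \times K) = KVK \subseteq T$. Since $\Phi$ is an open map on $K \times A^{+} \times K$, the set $KVK$ is open in $G$, so $T$ has non-empty interior.

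The main obstacle is the submersion claim for $\Phi$. I would compute its differential at $(e,a,e)$ with $a \in A^{+}$: translating to $\mathfrak{g} \cong T_{e}G$ on the left, the image of $d\Phi_{(e,a,e)}$ becomes $\mathrm{Ad}(a^{-1})\mathfrak{k} + \mathfrak{a} + \mathfrak{k}$. For each $\alpha \in \Phi^{+}$ and $X_{\alpha} \in \mathfrak{g}_{\alpha}$, the two vectors $X_{\alpha}+\theta X_{\alpha} \in \mathfrak{k}$ and $\mathrm{Ad}(a^{-1})(X_{\alpha}+\theta X_{\alpha}) = e^{-\alpha(\log a)}X_{\alpha} + e^{\alpha(\log a)}\theta X_{\alpha}$ are linearly independent precisely because $\alpha(\log a)\neq 0$ when $a \in A^{+}$; as $X_{\alpha}$ ranges over $\mathfrak{g}_{\alpha}$ these span $\mathfrak{g}_{\alpha}\oplus\mathfrak{g}_{-\alpha}$. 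Combined with $\mathfrak{m}\subseteq \mathfrak{k}$ and $\mathfrak{a}$, this recovers $\mathfrak{g}_{0}\oplus \bigoplus_{\alpha\in\Phi}\mathfrak{g}_{\alpha}=\mathfrak{g}$. Equivariance of $\Phi$ under the natural $K\times K$ action on the source then extends surjectivity of the differential to every point of $K\times A^{+}\times K$, completing the submersion argument.
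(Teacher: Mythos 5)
Your proof is correct. Both you and the paper reduce the lemma to Proposition~\ref{keyprop} (absolute continuity $\Leftrightarrow$ the product of double cosets has interior), and the direction ``interior of $\mathcal{A}(x_{1}K\cdots Kx_{m})$ $\Rightarrow$ absolute continuity'' is essentially the same in both: you produce the open set $KVK$ via openness of the multiplication map, while the paper takes the preimage under $\mathcal{A}$ of $V\setminus bdy(A^{+})$, quoting smoothness of $\mathcal{A}$ on $KA^{+}K$ from the literature. The genuine divergence is in the other direction. The paper never shows $\mathcal{A}$ is an open map; instead it intersects the open bi-invariant set $KVK$ with $A$, decomposes $KVK\cap A$ over the Weyl group, and uses Weyl-invariance of $KVK$ together with the fact that the chamber walls have Haar measure zero to locate a non-empty open subset of $A^{+}$ inside the product. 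You instead prove that $(k_{1},a,k_{2})\mapsto k_{1}ak_{2}$ is a submersion on $K\times A^{+}\times K$ by the standard root-space computation, deduce that $\mathcal{A}|_{KA^{+}K}$ is open, and push the open set forward directly; this treats both directions symmetrically through one openness statement and is more self-contained (it proves, rather than cites, the regularity of $\mathcal{A}$ on the regular set), at the cost of the differential computation that the paper's measure-theoretic bookkeeping avoids. Two small points you should make explicit to be airtight: the density of $KA^{+}K$ in $G$ (immediate from $G=K\overline{A^{+}}K$ and continuity of multiplication, since $A^{+}$ is dense in $\overline{A^{+}}$), and the identity $\mathcal{A}(U^{\prime})=\pi_{2}(\Phi^{-1}(U^{\prime}))$ for open $U^{\prime}\subseteq KA^{+}K$, which is what actually delivers openness of $\mathcal{A}(U^{\prime})$ from continuity of $\Phi$ and openness of the projection $\pi_{2}$.
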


\begin{proof}
Suppose $V$ is an open subset of $A$ contained in $\mathcal{A}%
(x_{1}Kx_{2}\cdot \cdot \cdot Kx_{m})$. Let $A^{+}=\exp \mathfrak{a}^{+}$.
Then $V\diagdown bdy(A^{+})\subseteq A^{+}$ is open in $A$. Since one can
easily check that $bdy(A^{+})$ has $A$-Haar measure zero,\textbf{\ } $%
V\diagdown bdy(A^{+})$ is non-empty.

It is known that the map $\mathcal{A}$ restricted to $KA^{+}K$ is a smooth
map onto $A^{+}$ \textbf{(\cite{GS2003})}, thus the preimage of $V\diagdown
bdy(A^{+})$ is open in $KA^{+}K$ and hence also in $G$ since $KA^{+}K$ is
open in $G$. But this non-empty open set is a subset of $Kx_{1}K\cdot \cdot
\cdot Kx_{m}K$ and therefore by Prop. \ref{keyprop}, $\nu _{x_{1}}\ast \cdot
\cdot \cdot \ast \nu _{x_{m}}$ is absolutely continuous.

Conversely, suppose $\nu _{x_{1}}\ast \cdot \cdot \cdot \ast \nu _{x_{m}}$
is absolutely continuous on $G$. Applying Prop. \ref{keyprop} we can find an
open, non-empty subset $V$ of $G$ contained in $Kx_{1}K\cdot \cdot \cdot
Kx_{m}K.$ But then also $KVK$ is an open set in $G$ contained in $%
Kx_{1}K\cdot \cdot \cdot Kx_{m}K$ and hence $KVK\bigcap A$ is open in $A$
(the topology on $A$ being the relative topology). It is non-empty since
every double coset admits elements of $A$.

Now%
\begin{equation*}
KVK\bigcap A=\bigcup_{w\in W}\left( KVK\bigcap w(A^{+})\right) \bigcup
\bigcup_{w\in W}\left( KVK\bigcap bdy(w(A^{+}))\right) .
\end{equation*}%
The sets $KVK\bigcap w(A^{+})$ are all open in $A$. If for some $w\in W$, $%
KVK\bigcap w(A^{+})$ is non-empty, then since $w^{-1}(KVK)=KVK$, it would
follow that $KVK\bigcap A^{+}$ $\subseteq Kx_{1}K\cdot \cdot \cdot Kx_{m}K$
is open and non-empty$.$ But then $\mathcal{A}(KVK\bigcap A^{+})$ is also
open and non-empty, hence $\mathcal{A}(Kx_{1}K\cdot \cdot \cdot Kx_{m}K)=%
\mathcal{A}(x_{1}K\cdot \cdot \cdot Kx_{m})$ has non-empty interior, as we
desired to show.

Otherwise, $KVK\bigcap A=\bigcup_{w\in W}\left( KVK\bigcap
bdy(w(A^{+}))\right) $. But the set on the right has Haar measure zero,
while the set on the left is open and non-empty, so this is impossible.
\end{proof}

\textbf{Terminology:} Let $G_{1}/K_{1}$ and $G_{2}/K_{2}$ be two symmetric
spaces. We say that $G_{1}/K_{1}$ is embedded into $G_{2}/K_{2}$ if there is
a mapping $\mathcal{I}:G_{1}\rightarrow G_{2}$ satisfying the following
properties.

\begin{definition}
\begin{enumerate}
\item $\mathcal{I}$ is a group isomorphism into $G_{2}$.

\item $\mathcal{I}$ restricted to $A_{1}$ is a topological group isomorphism
onto $A_{2}$.

\item $\mathcal{I}$ maps $K_{1}$ into $K_{2}$.
\end{enumerate}
\end{definition}

Property (2) ensures that the symmetric spaces have the same rank. Here are
some examples of embeddings.

\begin{lemma}
\label{embedding1}In the following cases $G_{1}/K_{1}$ embeds into $%
G_{2}/K_{2}$:%
\begin{equation*}
\begin{array}{cccccc}
\begin{array}{c}
\text{Cartan } \\ 
\text{class}%
\end{array}
& G_{1} & K_{1} & G_{2} & K_{2} & 
\begin{array}{c}
\text{Cartan } \\ 
\text{class}%
\end{array}
\\ 
AI & SL(n,\mathbb{R)} & SO(n) & SL(n,\mathbb{H)} & Sp(n) & AII \\ 
BDI & SO_{0}(p,q),q\geq p & SO(p)\times SO(p) & SO_{0}(p,q+1),q\geq p & 
SO(p)\times SO(q) & BDI \\ 
BDI & SO_{0}(p,q),q\geq p & SO(p)\times SO(q) & SU(p,q),q\geq p & 
SU(p)\times SU(q) & AIII \\ 
AIII & SU(p,q),q\geq p & SU(p)\times SU(q) & Sp(p,q),q\geq p & Sp(p)\times
Sp(q) & CII \\ 
\text{Type }IV & SO(n,\mathbb{C)} & SO(n) & SO^{\ast }(2n) & U(n) & DIII%
\end{array}%
\end{equation*}
\end{lemma}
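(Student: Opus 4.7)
The plan is to exhibit the map $\mathcal{I}$ explicitly in each of the five rows, using the standard matrix realizations of the classical groups summarized in the appendix, and then verify properties (1)--(3) case by case. In rows one, three, and four, $\mathcal{I}$ is induced by the scalar extensions $\mathbb{R}\hookrightarrow \mathbb{H}$ (for $AI\to AII$), $\mathbb{R}\hookrightarrow \mathbb{C}$ (for $BDI\to AIII$), and $\mathbb{C}\hookrightarrow \mathbb{H}$ (for $AIII\to CII$), viewing an $n\times n$ matrix over the smaller field as a matrix over the larger field. In row two, $\mathcal{I}$ is the block inclusion $g\mapsto \mathrm{diag}(g,1)$ of $SO_{0}(p,q)$ into $SO_{0}(p,q+1)$. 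In row five, $\mathcal{I}$ is the standard inclusion $SO(n,\mathbb{C})\hookrightarrow SO^{\ast }(2n)$ obtained from realizing $SO^{\ast }(2n)\subseteq GL(2n,\mathbb{C})$ as the fixed points of a suitable involution, restricted to its $\mathbb{C}$-linear elements.

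For each case, property (1) is immediate from the functoriality of matrix multiplication under scalar extension or block inclusion, and the resulting map is clearly injective on entries. Property (3), $\mathcal{I}(K_{1})\subseteq K_{2}$, follows by checking defining relations: a real orthogonal matrix is symplectic when viewed inside $Sp(n)\subseteq GL(n,\mathbb{H})$; a real orthogonal matrix is unitary when viewed inside $SU$; a complex unitary matrix is quaternionic-unitary inside $Sp(p)\times Sp(q)$; and $\mathrm{diag}(k_{1},k_{2},1)$ with $k_{1}\in SO(p),k_{2}\in SO(q)$ manifestly lies in $SO(p)\times SO(q+1)$. Property (2) is the main content. The appendix provides the standard choice of $\mathfrak{a}$ for each $G_j/K_j$; in rows one through four these are spaces of real diagonal matrices (or real ``hyperbolic'' off-diagonal blocks in the $BDI/AIII/CII$ cases), whose entries are preserved by scalar extension and by block inclusion. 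The ranks of $G_{1}/K_{1}$ and $G_{2}/K_{2}$ coincide in every row (directly from the appendix), so $\mathcal{I}|_{\mathfrak{a}_{1}}\to \mathfrak{a}_{2}$ is automatically a linear isomorphism by dimension count, and passing to $A_{j}=\exp \mathfrak{a}_{j}$ then yields the topological group isomorphism required by (2).

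The main obstacle is the $DIII$ row, because the realization of $SO^{\ast }(2n)$ is the most opaque of the five. Here I would use the description $SO^{\ast }(2n)=SO(2n,\mathbb{C})\cap U(n,n)$ for a carefully matched pair of a symmetric complex-bilinear form and a Hermitian form of signature $(n,n)$, identify $SO(n,\mathbb{C})$ as the stabilizer of a $\mathbb{C}$-linear decomposition $\mathbb{C}^{2n}=\mathbb{C}^{n}\oplus \mathbb{C}^{n}$ that is compatible with both forms, and verify by a brief computation that the standard Cartan subspace $\mathfrak{a}$ of $SO(n,\mathbb{C})/SO(n)$ (which has rank $\lfloor n/2\rfloor$) lands isomorphically onto the standard $\mathfrak{a}$ of $SO^{\ast }(2n)/U(n)$ recorded in the appendix, and that $SO(n)$ lands inside $U(n)$. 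Once the forms and basis are pinned down, this reduces to routine matrix algebra of the same flavour as the other rows.
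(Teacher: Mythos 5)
Your proposal is correct and takes essentially the same approach as the paper: the paper disposes of the first four rows by observing that the embedding is simply the identity (natural inclusion) with $A_{1}=A_{2}$, and spends its effort only on the last row. There it realizes $SO^{\ast }(2n)$ as the set of $g\in SO(2n,\mathbb{C})$ with $g^{t}J_{n}\overline{g}=J_{n}$ and writes down the explicit map $\mathcal{I}(g)=\mathrm{diag}(g,\overline{g})$ (which is precisely what your ``stabilizer of a decomposition compatible with both forms'' works out to in a conjugate realization), verifying directly that $\mathcal{I}(A_{1})=A_{2}$ and that $SO(n)$ lands inside the embedded copy of $U(n)$.
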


\begin{proof}
In fact, it is obvious that in all but the last case that the embedding map $%
\mathcal{I}$ is the identity and $A_{1}=A_{2}$.

For the final case, we remind the reader that $SO(n,\mathbb{C)}$ is the set
of $n\times n$ complex matrices $g$ satisfying $g^{t}g=Id$ and $SO^{\ast
}(2n)$ are the matrices in $SO(2n,\mathbb{C)}$ with the additional
requirement that $g^{t}J_{n}\overline{g}=J_{n}$ where $J_{n}=\left[ 
\begin{array}{cc}
0 & I_{n} \\ 
-I_{n} & 0%
\end{array}%
\right] $. The subgroup $SO(n)$ embeds into $SO(n,\mathbb{C)}$ in the
natural way and $U(n)$ embeds into $SO^{\ast }(2n)$ as follows: The matrix $%
X+iY\in U(n)$, where $X,Y$ are real, maps to $\left[ 
\begin{array}{cc}
X & Y \\ 
-Y & X%
\end{array}%
\right] $. Here $A_{1}=\{\exp iX:X\in \mathfrak{t}\}$ where $\mathfrak{t}$
is the maximal torus of the Lie algebra of $SO(n)$.

If we define $\mathcal{I}$ $:SO(n,\mathbb{C)\rightarrow }SO^{\ast }(2n)$ by $%
\mathcal{I}(g)=\left[ 
\begin{array}{cc}
g & 0 \\ 
0 & \overline{g}%
\end{array}%
\right] $, then $\mathcal{I}(A_{1})=A_{2}$ and the other conditions of the
embedding lemma are also satisfied.
\end{proof}

The embedding property is important because we can deduce absolute
continuity of certain convolution products of orbital measures in the
`larger' space $G_{2}/K_{2}$ from the property in the `smaller' symmetric
space $G_{1}/K_{1}$.

\begin{proposition}
\label{embedding2}Suppose $G_{1}/K_{1}$ is embedded into $G_{2}/K_{2}$ with
the mapping $\mathcal{I}$.

\begin{enumerate}
\item Let $x_{1},....,x_{m}\in A_{1}$. If $\nu _{x_{1}}\ast \cdot \cdot
\cdot \ast \nu _{x_{m}}$ is absolutely continuous on $G_{1}$ and $z_{j}=%
\mathcal{I}(x_{j})$, then $\nu _{z_{1}}\ast \cdot \cdot \cdot \ast \nu
_{z_{m}}$ is absolutely continuous on $G_{2}$.

\item Let $X_{1},....,X_{m}\in \mathfrak{a}_{1}$. If $(X_{1},...,X_{m})$ is
an absolutely continuous tuple on $\mathfrak{p}_{1}$ and $\exp (Z_{j})=%
\mathcal{I}(\exp X_{j})$, then $(Z_{1},...,Z_{m})$ is absolutely continuous
on $\mathfrak{p}_{2}$.
\end{enumerate}
\end{proposition}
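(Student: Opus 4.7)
The plan is to establish (1) via Lemma \ref{openinA} --- which reduces absolute continuity to the $\mathcal{A}$-image of the product of cosets having non-empty interior in $A$ --- and then obtain (2) as a direct corollary using Cor. \ref{equiv} twice. So the substantive step is transferring the ``open interior in $A$'' property from $G_{1}/K_{1}$ to $G_{2}/K_{2}$ through the embedding $\mathcal{I}$.

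For (1), Lemma \ref{openinA} furnishes a non-empty open $V\subseteq A_{1}$ inside $\mathcal{A}_{1}(x_{1}K_{1}x_{2}\cdots K_{1}x_{m})$, and the task becomes producing a non-empty open subset of $\mathcal{A}_{2}(z_{1}K_{2}z_{2}\cdots K_{2}z_{m})$ in $A_{2}$. Given $a\in V$, I would pick $g=x_{1}k_{2}x_{2}\cdots k_{m}x_{m}$ with $k_{i}\in K_{1}$ and $\mathcal{A}_{1}(g)=a$, so that $g=k\,a\,k'$ for some $k,k'\in K_{1}$. Because $\mathcal{I}$ is a group homomorphism, sends $K_{1}$ into $K_{2}$, and maps $x_{j}$ to $z_{j}$, applying $\mathcal{I}$ gives $\mathcal{I}(g)\in z_{1}K_{2}z_{2}\cdots K_{2}z_{m}$ and $\mathcal{I}(g)=\mathcal{I}(k)\mathcal{I}(a)\mathcal{I}(k')$ with $\mathcal{I}(k),\mathcal{I}(k')\in K_{2}$. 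The $K_{2}$-bi-invariance of $\mathcal{A}_{2}$ then yields $\mathcal{A}_{2}(\mathcal{I}(a))\in \mathcal{A}_{2}(z_{1}K_{2}z_{2}\cdots K_{2}z_{m})$.

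The main obstacle is that $\mathcal{I}(a)$ lies in $A_{2}$ but need not lie in $\overline{A_{2}^{+}}$, so $\mathcal{A}_{2}(\mathcal{I}(a))=w_{a}\mathcal{I}(a)$ for a Weyl group element $w_{a}\in W_{2}$ that depends on $a$. I would overcome this with a finite partition argument: for each $w\in W_{2}$ set $V_{w}=V\cap \mathcal{I}^{-1}(w^{-1}(A_{2}^{+}))$, which is open in $A_{1}$ since $\mathcal{I}|_{A_{1}}$ is a homeomorphism onto $A_{2}$ (embedding property~(2)). The union $\bigsqcup_{w\in W_{2}}V_{w}$ omits only the $\mathcal{I}$-preimage of the chamber walls, a nowhere-dense closed subset of $A_{1}$, so some $V_{w_{0}}$ is non-empty. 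Then $w_{0}\mathcal{I}(V_{w_{0}})$ is non-empty and open in $A_{2}$ (being the image of an open set under the composition of two homeomorphisms), lies inside $A_{2}^{+}$ by construction, and every element of it equals $\mathcal{A}_{2}(\mathcal{I}(a))$ for some $a\in V_{w_{0}}$; hence it sits inside $\mathcal{A}_{2}(z_{1}K_{2}z_{2}\cdots K_{2}z_{m})$. Invoking Lemma \ref{openinA} in $G_{2}$ finishes (1).

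For (2), put $x_{j}=\exp X_{j}$, so that $\mathcal{I}(x_{j})=\exp Z_{j}=z_{j}$. By Cor. \ref{equiv}, absolute continuity of $(X_{1},\dots ,X_{m})$ on $\mathfrak{p}_{1}$ is equivalent to absolute continuity of $\nu _{x_{1}}\ast \cdots \ast \nu _{x_{m}}$ on $G_{1}$; part (1) transfers this to $\nu _{z_{1}}\ast \cdots \ast \nu _{z_{m}}$ on $G_{2}$; and Cor. \ref{equiv} once more converts this into absolute continuity of $(Z_{1},\dots ,Z_{m})$ on $\mathfrak{p}_{2}$. Once the Weyl-chamber bookkeeping for (1) is in hand the remainder is a direct unwinding of definitions.
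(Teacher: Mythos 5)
Your proof is correct and follows essentially the same route as the paper: reduce both sides to Lemma \ref{openinA}, push the open set $V\subseteq A_{1}$ through $\mathcal{I}$ into $\mathcal{A}_{2}(z_{1}K_{2}\cdots K_{2}z_{m})$, and deduce (2) from (1) via Cor.~\ref{equiv}. The one place you diverge is that the paper simply asserts the intertwining identity $\mathcal{I}\circ \mathcal{A}_{1}=\mathcal{A}_{2}\circ \mathcal{I}$ (``one can check from the definitions''), which implicitly requires $\mathcal{I}$ to carry $\overline{A_{1}^{+}}$ into $\overline{A_{2}^{+}}$ --- something the abstract definition of an embedding does not guarantee; your finite partition of $V$ by Weyl chambers of $A_{2}$, picking a non-empty $V_{w_{0}}$ and replacing $\mathcal{I}(V)$ by the open set $w_{0}\mathcal{I}(V_{w_{0}})\subseteq A_{2}^{+}$, handles this cleanly using only the stated embedding axioms. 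This is a modest but genuine gain in rigor over the paper's version; no gaps.
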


\begin{proof}
Put $\mathcal{A}_{j}:G_{j}\rightarrow A_{j}$ for $j=1,2$. One can check from
the definitions that $\mathcal{I\circ A}_{1}=\mathcal{A}_{2}\circ \mathcal{I}
$. As $\mathcal{I}$ is a group isomorphism, for all $k_{1},...,k_{m-1}$ $\in
K_{1}$ we have 
\begin{eqnarray*}
\mathcal{I}(\mathcal{A}_{1}(x_{1}k_{1}\cdot \cdot \cdot k_{m-1}x_{m})) &=&%
\mathcal{A}_{2}(\mathcal{I}(x_{1})\mathcal{I}(k_{1})\mathcal{I}(x_{2})\cdot
\cdot \cdot \mathcal{I}(k_{m-1})\mathcal{I}(x_{m})) \\
&\subseteq &\mathcal{A}_{2}(z_{1}K_{2}z_{2}\cdot \cdot \cdot K_{2}z_{m}).
\end{eqnarray*}%
Hence $\mathcal{I}(\mathcal{A}_{1}(x_{1}K_{1}\cdot \cdot \cdot
K_{1}x_{m}))\subseteq \mathcal{A}_{2}(z_{1}K_{2}z_{2}\cdot \cdot \cdot
K_{2}z_{m})$.

As $\nu _{x_{1}}\ast \cdot \cdot \cdot \ast \nu _{x_{m}}$ is absolutely
continuous on $G_{1}$, Lemma \ref{openinA} implies there is an open set $%
V\subseteq A_{1}$ with $V\subseteq \mathcal{A}_{1}(x_{1}K_{1}x_{2}\cdot
\cdot \cdot K_{1}x_{m})$. Since $\mathcal{I}(V)$ is open in $A_{2}$ and
contained in $\mathcal{A}_{2}(z_{1}K_{2}z_{2}\cdot \cdot \cdot K_{2}z_{m})$,
it follows by another application of the lemma that $\nu _{z_{1}}\ast \cdot
\cdot \cdot \ast \nu _{z_{m}}$ is absolutely continuous on $G_{2}$.

Part (2) follows from (1) and Corollary \ref{equiv}.
\end{proof}

\begin{remark}
This idea is implicit in the work of Graczyk and Sawyer, in the special case
of the embedding map being the identity.
\end{remark}

\subsection{Induction argument}

Let $G_{n}/K_{n}$ be a symmetric space of rank $n$, with restricted root
system of type $B_{n}\,$, $C_{n},D_{n}$ or $BC_{n}$. Let $Z\in \mathfrak{a}%
_{n}$, say%
\begin{equation*}
Z=(\underbrace{0,\dots ,0}_{J},\underbrace{a_{1},\dots ,a_{1}}_{s_{1}},\dots
,\underbrace{a_{m},\dots ,(\pm )a_{m}}_{s_{m}})\in \mathfrak{a}_{n},
\end{equation*}%
where $s_{1}=\max s_{j}$. We denote by $Z^{\prime }$ the element of $%
\mathfrak{a}_{n-1}$ given by 
\begin{equation}
Z^{\prime }=%
\begin{cases}
(\underbrace{0,\dots ,0}_{J-1},\underbrace{a_{1},\dots ,a_{1}}_{s_{1}},\dots
,\underbrace{a_{m},\dots ,(\pm )a_{m}}_{s_{m}}) & \text{ if }2J\geq s_{1} \\ 
(\underbrace{0,\dots ,0}_{J},\underbrace{a_{1},\dots ,a_{1}}_{s_{1}-1},\dots
,\underbrace{a_{m},\dots ,(\pm )a_{m}}_{s_{m}}) & \text{if }2J<s_{1}%
\end{cases}%
.  \label{X'}
\end{equation}%
Define $Z^{\prime }$ similarly when the restricted root system of $%
G_{n}/K_{n}$ is type $A_{n}$.

We embed $\mathfrak{a}_{n-1}$ into $\mathfrak{a}_{n}$ by taking the standard
basis vectors $e_{1},\dots ,e_{n}$ in $\mathbb{R}^{n}$ (or $%
e_{1}-e_{n+1},\dots ,e_{n}-e_{n+1}$ in $\mathbb{R}^{n+1}$ in the case of
type $A_{n})$ as the basis for $\mathfrak{a}_{n}$ and taking the vectors $%
e_{2},\dots ,e_{n}$ (resp., $e_{2}-e_{n+1},\dots ,e_{n}-e_{n+1})$ as the
basis for $\mathfrak{a}_{n-1}$. This also gives a natural embedding of $\Phi
_{n-1}$ into $\Phi _{n}$ and together these give an embedding of $\mathfrak{g%
}_{n-1}$, $\mathfrak{p}_{n-1}$ and $\mathfrak{k}_{n-1}$ into $\mathfrak{g}%
_{n}$, $\mathfrak{p}_{n}$ and $\mathfrak{k}_{n}$ respectively, an embedding
of $G_{n-1}$ into $G_{n}$, and an embedding of $K_{n-1}$ into $K_{n}$. We
will also view $Z^{\prime }$ as an element of $\mathfrak{a}_{n}$ in the
natural way.

With this understanding, put 
\begin{equation*}
\Omega _{Z}=\mathcal{N}_{Z}\diagdown \mathcal{N}_{Z^{\prime }}\subseteq 
\mathfrak{p}_{n}.
\end{equation*}

\begin{lemma}
\label{eligible}If $(X,Y)$ is an eligible pair in $\mathfrak{a}_{n}$ and $%
X,Y $ are not both of type $SU(m)\times SU(m)$ in a symmetric space with
restricted root system of type $A_{n}$ where $n=2m-1$, then the reduced
pair, $(X^{\prime },Y^{\prime }),$ is eligible in $\mathfrak{a}_{n-1}$.
\end{lemma}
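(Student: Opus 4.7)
The plan is to track how $S_X$ drops under the reduction $X \mapsto X'$ and then perform a short case analysis on the pair $(\Delta_X, \Delta_Y)$ where $\Delta_X := S_X - S_{X'} \geq 0$. Write $E(n) = n+1$ in type $A_n$ and $E(n) = 2n$ in types $B_n, C_n, D_n, BC_n$, so the hypothesis reads $S_X + S_Y \leq E(n)$ and the goal is $S_{X'} + S_{Y'} \leq E(n-1)$, with the gap $E(n) - E(n-1)$ equal to $1$ in type A and $2$ in the other types.

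Inspecting the definition of $Z'$, one sees that $\Delta_X = 0$ happens exactly when $X$ has a ``tied'' maximum: in type A this means $s_1 = s_2$; in types BCDBC it means either $X$ is dominant $B$ with $s_1 = 2J$ (the balanced boundary) or $X$ is dominant $SU$ with $s_1 = s_2$. In any other configuration $\Delta_X \geq 1$, and $\Delta_X = 2$ arises only for a dominant $B$ element with $\max s_j \leq 2J-2$. The combinatorial input I would use is that $\Delta_X = 0$ forces $S_X$ to be small: in type A, $\sum s_j = n+1$ together with $s_1 = s_2$ gives $S_X \leq (n+1)/2$; in BCDBC, $J + \sum s_j = n$ gives $S_X \leq 2n/3$ (from $3J \leq n$ in the $B$-balanced case and $2 s_1 \leq n-J$ in the $SU$-tie case). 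A parallel inspection shows that in BCDBC, $\Delta_Y = 1$ implies $S_Y \leq n$ (via either a dominant $B$ element with $\max s_j = 2J-1$, giving $S_Y \leq 2(n+1)/3$, or a dominant $SU$ element with strict maximum, giving $S_Y \leq n-J \leq n$).

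The case analysis on $\Delta_X + \Delta_Y$ is then short. If $\Delta_X + \Delta_Y \geq 2$, then $S_{X'} + S_{Y'} \leq S_X + S_Y - 2 \leq E(n) - 2 \leq E(n-1)$ and we are done. If $\Delta_X + \Delta_Y = 1$, the conclusion is immediate in type A (only $1$ needs to be saved); in BCDBC (say $\Delta_X = 0$, $\Delta_Y = 1$) the bounds $S_X \leq 2n/3$ and $S_Y \leq n$ yield $S_X + S_Y \leq 5n/3 \leq 2n - 1$ for $n \geq 3$, with the very small values of $n$ checked by hand. If $\Delta_X + \Delta_Y = 0$, then in type A the bounds $S_X, S_Y \leq (n+1)/2$ give $S_X + S_Y \leq n+1$, with equality forcing $S_X = S_Y = (n+1)/2$, hence $n = 2m-1$ and both elements of type $SU(m) \times SU(m)$ --- exactly the excluded case --- and otherwise $S_X + S_Y \leq n = E(n-1)$; in BCDBC the bounds $S_X, S_Y \leq 2n/3$ give $S_X + S_Y \leq 4n/3 \leq 2n - 2$ for $n \geq 3$, again with small $n$ handled directly.

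The main obstacle is the bookkeeping for types $B, C, D, BC$, where the reduction can cause $X$ to switch between dominant $B$ and dominant $SU$ types, creating several boundary subcases; the crux is that the combinatorial situations in which $\Delta_X = 0$ are exactly those in which $S_X \leq 2n/3$, and this slack is what absorbs the shortfall when the reduction drops $S_X + S_Y$ by less than $2$. In type A the analogous alignment holds with the slightly weaker bound $(n+1)/2$, and admits a single unavoidable equality case, which is precisely the source of the lemma's $SU(m) \times SU(m)$ exception.
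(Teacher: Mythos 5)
Your proposal is correct and is essentially the ``straightforward calculation'' the paper alludes to (the paper itself only cites Lemma~3 of \cite{GHAbs} for the details): tracking the drop $\Delta_X=S_X-S_{X'}$ under the reduction, noting that $\Delta_X=0$ forces $S_X\leq (n+1)/2$ in type $A$ and $S_X\leq 2n/3$ in types $B,C,D,BC$, and observing that the only unavoidable equality case is the tied pair $S_X=S_Y=(n+1)/2$, which is exactly the excluded $SU(m)\times SU(m)$ configuration. The case classification of when $\Delta_X$ equals $0$, $1$ or $2$ matches the definitions of $Z'$ and of dominant type, so no gap remains.
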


\begin{proof}
The proof is a straightforward calculation.\textbf{\ }The details are worked
out for the compact Lie algebra case in \cite[Lemma 3]{GHAbs}.
\end{proof}

We next adapt the general strategy used in \cite[Prop. 2]{GHAbs} for the
corresponding problem in the classical compact Lie algebra setting.

\begin{proposition}
\label{general strategy}\textrm{(}General Strategy\/\textrm{) } Let $%
G_{n}/K_{n}$ be a symmetric space of rank $n,$ with associated Lie algebra $%
\mathfrak{g}_{n}=\mathfrak{k}_{n}\oplus \mathfrak{p}_{n}$ and maximal
abelian subspace $\mathfrak{a}_{n}$. Let $X_{i}\in \mathfrak{a}_{n}$, $%
i=1,\dots ,L$ for $L\geq 2$ and assume $(X_{1}^{\prime },\dots
,X_{L}^{\prime })$ is an absolutely continuous tuple in $\mathfrak{p}_{n-1}$.

Let $\mathcal{V}_{j}=\mathfrak{p}_{j}\ominus \mathfrak{a}_{j}$ for $j=n-1,n$%
. Suppose $\Omega $ is a subset of $\mathcal{V}_{n}\ominus \mathcal{V}_{n-1}$
that contains all $\Omega _{X_{i}}$ and has the property that $ad(H)(\Omega
)\subseteq sp\Omega $ whenever $H\in \mathfrak{k}_{n-1}$. Fix $\Omega
_{0}\subseteq \Omega _{X_{L}}$ and assume there exists $k_{1},\dots
,k_{L-1}\in K_{n-1}$ and $M\in \mathfrak{k}_{n}$ such that

\textrm{(i)} $sp\{Ad(k_{i})(\Omega _{X_{i}}),\Omega _{X_{L}}\backslash
\Omega _{0}:i=1,\dots ,L-1\}=sp\Omega ;$

\textrm{(ii)} $ad^{k}(M):\mathcal{N}_{X_{L}}\backslash \Omega
_{0}\rightarrow sp\{\Omega ,\mathfrak{p}_{n-1}\}$ for all positive integers $%
k$; and

\textrm{(iii)} The span of the projection of $Ad(\exp sM)(\Omega _{0})$ onto
the orthogonal complement of $sp\{\mathfrak{p}_{n-1},\Omega \}$ in $%
\mathfrak{p}_{n}$ is a surjection for all small $s>0.$

Then $(X_{1},\dots ,X_{L})$ is an absolutely continuous tuple in $\mathfrak{p%
}_{n}$.
\end{proposition}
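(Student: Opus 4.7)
The aim is to verify condition (3) of Proposition \ref{keyprop} for the tuple $(X_1,\ldots,X_L)$: one must exhibit rotations $\widetilde{k}_j\in K_n$ (with $\widetilde{k}_1 = \mathrm{Id}$) such that $\sum_j Ad(\widetilde{k}_j)\mathcal{N}_{X_j} = \mathfrak{p}_n$. The plan is to build this span layer-by-layer along the orthogonal decomposition
\begin{equation*}
\mathfrak{p}_n \;=\; \mathfrak{p}_{n-1} \,\oplus\, sp\Omega \,\oplus\, W,\qquad W := \mathfrak{p}_n \ominus sp\{\mathfrak{p}_{n-1},\Omega\}.
\end{equation*}
The inductive hypothesis on $(X'_1,\ldots,X'_L)$ will contribute $\mathfrak{p}_{n-1}$, hypothesis (i) will contribute $sp\Omega$, and the perturbation $\exp(sM)$ combined with (ii), (iii) will contribute $W$. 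A basic observation, used repeatedly, is that $\Omega\supseteq\Omega_{X_j}$ and $ad(\mathfrak{k}_{n-1})\Omega\subseteq sp\Omega$ (together with the $K_{n-1}$-invariance of $\mathfrak{p}_{n-1}$) imply that every element of $K_{n-1}$ preserves each of the three summands, and $\mathcal{N}_{X_j} = \mathcal{N}_{X'_j}\oplus \Omega_{X_j}$ with $\mathcal{N}_{X'_j}\subseteq\mathfrak{p}_{n-1}$ and $\Omega_{X_j}\subseteq sp\Omega$.

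The first step is to select appropriate rotations. I would choose a tuple $(h_1,\ldots,h_L)\in K_{n-1}^L$ lying in the intersection of two open subsets:
\begin{equation*}
\text{(a)}\quad \sum_{j=1}^{L-1} Ad(h_j k_j)\mathcal{N}_{X'_j} \;+\; Ad(h_L)\mathcal{N}_{X'_L} \;=\; \mathfrak{p}_{n-1},
\end{equation*}
\begin{equation*}
\text{(b)}\quad \sum_{j=1}^{L-1} Ad(h_j k_j)\Omega_{X_j} \;+\; Ad(h_L)(\Omega_{X_L}\setminus\Omega_0) \;=\; sp\Omega.
\end{equation*}
Condition (a) defines an open dense subset, by the density clause of Proposition \ref{keyprop} applied to the assumed absolute continuity of $(X'_1,\ldots,X'_L)$ in $\mathfrak{p}_{n-1}$. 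Condition (b) defines an open subset that is non-empty because at $h_j = \mathrm{Id}$ for all $j$ it reduces precisely to hypothesis (i). Thus the two conditions can be satisfied simultaneously. Define $\widetilde{k}_j := h_jk_j\in K_{n-1}$ for $1\le j\le L-1$, and $\widetilde{k}_L := h_L \exp(sM)\in K_n$ for $s>0$ small; the normalisation $\widetilde{k}_1 = \mathrm{Id}$ may be enforced by a subsequent common left-translation, which does not affect the spanning condition.

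With these rotations fixed, set $U_s := \sum_j Ad(\widetilde{k}_j)\mathcal{N}_{X_j}$ and split $\mathcal{N}_{X_L} = (\mathcal{N}_{X_L}\setminus\Omega_0)\oplus\Omega_0$. By (ii), $Ad(\exp sM)$ carries $\mathcal{N}_{X_L}\setminus\Omega_0$ into $sp\{\mathfrak{p}_{n-1},\Omega\}$ for every $s$, and since $Ad(h_L)$ preserves this subspace, the subspace
\begin{equation*}
T_s \;:=\; \sum_{j<L} Ad(\widetilde{k}_j)\mathcal{N}_{X_j} \;+\; Ad(\widetilde{k}_L)(\mathcal{N}_{X_L}\setminus\Omega_0)
\end{equation*}
is contained in $\mathfrak{p}_{n-1}\oplus sp\Omega$. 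Because every summand of $T_s$ splits cleanly into its $\mathfrak{p}_{n-1}$- and $sp\Omega$-parts, at $s=0$ the $\mathfrak{p}_{n-1}$-part of $T_0$ fills $\mathfrak{p}_{n-1}$ by (a) and the $sp\Omega$-part fills $sp\Omega$ by (b), so $T_0 = \mathfrak{p}_{n-1}\oplus sp\Omega$. By lower semi-continuity of $\dim T_s$ in $s$ combined with the upper bound $T_s \subseteq \mathfrak{p}_{n-1}\oplus sp\Omega$, the equality $T_s = \mathfrak{p}_{n-1}\oplus sp\Omega$ persists for all sufficiently small $s>0$. In particular, $\mathfrak{p}_{n-1}\oplus sp\Omega \subseteq U_s$.

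Finally, hypothesis (iii) and the $K_{n-1}$-invariance of $W$ ensure that the $W$-projection of $Ad(\widetilde{k}_L)\Omega_0\subseteq U_s$ surjects onto $W$, so $U_s = \mathfrak{p}_n$. The main obstacle is precisely the interaction between the $\mathcal{N}_{X_L}\setminus\Omega_0$ and $\Omega_0$ parts of the $L$-th term under perturbation: a priori, surjectivity of the three projections onto $\mathfrak{p}_{n-1}, sp\Omega, W$ does not force $U_s = \mathfrak{p}_n$. What rescues the argument is hypothesis (ii), which anchors the $(\mathcal{N}_{X_L}\setminus\Omega_0)$-part in $\mathfrak{p}_{n-1}\oplus sp\Omega$, so that $T_s$ actually equals $\mathfrak{p}_{n-1}\oplus sp\Omega$ rather than merely projecting surjectively onto each factor; hypothesis (iii) then contributes exactly the missing $W$-component without cross-interference.
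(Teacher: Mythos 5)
Your proof is correct and takes essentially the same route as the paper's: both rest on the density clause of Proposition \ref{keyprop} applied to the reduced tuple, an openness/perturbation argument to preserve hypothesis (i), and hypotheses (ii)--(iii) to fill in $sp\Omega$ and then its orthogonal complement. The only cosmetic differences are your use of the composition $h_jk_j$ (plus the extra rotation $h_L$) where the paper chooses $h_j$ $\varepsilon$-close to $k_j$, and your phrasing of the persistence of spanning via lower semicontinuity of $\dim T_s$.
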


\begin{proof}
As $(X_{1}^{\prime },\dots ,X_{L}^{\prime })$ is an absolutely continuous
tuple, Prop. \ref{keyprop} tells us that 
\begin{equation*}
sp\left\{ Ad(h_{i})(\mathcal{N}_{X_{i}^{\prime }}),\mathcal{N}%
_{X_{L}^{\prime }}:i=1,\dots ,L-1\right\} =\mathfrak{p}_{n-1}
\end{equation*}%
for a dense set of $(h_{1},\dots ,h_{L-1})\in K_{n-1}^{L-1}$. Given $%
\varepsilon >0$, choose such $h_{i}=h_{i}(\varepsilon )\in K_{n-1}$ with $%
\left\Vert Ad(h_{i})-Ad(k_{i})\right\Vert <\varepsilon ,$ where the elements 
$k_{i}\in K_{n-1}$ are the ones given in the hypothesis of the proposition
and the norm is the operator norm.

An elementary linear algebra argument, together with assumption (i), shows
that for sufficiently small $\varepsilon >0$, 
\begin{equation*}
\dim (sp\Omega )=\dim \left( sp\{Ad(h_{i})(\Omega _{X_{i}}),\Omega
_{X_{L}}\backslash \Omega _{0}:i=1,\dots ,L-1\}\right) .
\end{equation*}

Since $ad(H)(\Omega )\subseteq sp(\Omega )$ for all $H\in \mathfrak{k}_{n-1}$
and $h_{i}=\exp H_{i}$ for some $H_{i}\in \mathfrak{k}_{n-1}$ we have $%
Ad(h_{i})(\Omega )=\exp (ad(H_{i})(\Omega )\subseteq sp\Omega $ for all $%
h_{i}\in K_{n-1}$. Thus for sufficiently small $\varepsilon >0$, 
\begin{equation*}
sp\left\{ Ad(h_{i})(\Omega _{X_{i}}),\Omega _{X_{L}}\backslash \Omega
_{0}:i=1,\dots ,L-1\right\} =sp\Omega
\end{equation*}%
For such a choice of $\varepsilon $ (now fixed) we have 
\begin{equation*}
sp\{Ad(h_{i})(\mathcal{N}_{X_{i}}),\mathcal{N}_{X_{L}}\backslash \Omega
_{0}:i=1,\dots ,L-1\}=sp\{\Omega ,\mathfrak{p}_{n-1}\}.
\end{equation*}

Assumption (ii) and the fact that $\mathcal{N}_{X_{L}}\backslash \Omega
_{0}\subseteq sp\{\Omega ,\mathfrak{p}_{n-1}\}$ implies that for any $s>0$, $%
\exp (s\cdot adM)=Ad(\exp sM)$ maps $\mathcal{N}_{X_{L}}\backslash \Omega
_{0}$ to $sp\{\Omega ,\mathfrak{p}_{n-1}\}$. Moreover, $\left\Vert
Id-Ad(\exp sM)\right\Vert \rightarrow 0$ as $s\rightarrow 0$, thus similar
reasoning to that above shows that for all small enough $s>0$, 
\begin{equation*}
sp\{\Omega ,\mathfrak{p}_{n-1}\}=sp\{Ad(h_{i})(\mathcal{N}_{X_{i}}),Ad(\exp
sM)(\mathcal{N}_{X_{L}}\backslash \Omega _{0}):i=1,\dots ,L-1\}=\mathfrak{p}%
_{n},
\end{equation*}%
with the final equality coming from (iii). Another application of Prop.~\ref%
{keyprop} proves that $\mu _{X_{1}}\ast \cdots \ast \mu _{X_{L}}$ is
absolutely continuous.
\end{proof}

We will now focus on the symmetric spaces all of whose restricted root
spaces have dimension one. These are the symmetric spaces of Cartan type $AI$%
, $CI$, $DI$ and $BI$, the latter two in the cases when the symmetric space
is $SO_{0}(p+q)/SO(p)\times SO(q)$ with $q=p$ and $q=p+1$ respectively.

For such spaces we will introduce the notation $E_{\alpha }$ for a (fixed
choice of) basis vector of the restricted root space $\mathfrak{g}_{\alpha }$%
, $\alpha \in \Phi ^{+}$, and put 
\begin{eqnarray*}
E_{\alpha }^{+} &=&E_{\alpha }+\theta E_{\alpha } \\
E_{\alpha }^{-} &=&E_{\alpha }-\theta E_{\alpha }.
\end{eqnarray*}

The following is well known and very important for us:%
\begin{equation}
\lbrack E_{\alpha }^{+},E_{\beta }^{-}]=cE_{\alpha +\beta }^{-}+dE_{\alpha
-\beta }^{-}  \label{adrule}
\end{equation}%
where $c$ (or $d$) $\neq 0$ if $\alpha +\beta $ (respectively, $\alpha
-\beta )$ is a restricted root and $E_{\alpha +\beta }^{-}$ (or $E_{\alpha
-\beta }^{-}$) should be understood to be the zero vector if $\alpha +\beta $
(resp. $\alpha -\beta )$ is not a restricted root. Furthermore, 
\begin{equation*}
0\neq \lbrack E_{\alpha }^{+},E_{\alpha }^{-}]\in \mathfrak{a}
\end{equation*}%
and if for some subset of roots $I$, $\{\alpha :\alpha \in I\}$ spans $%
sp\Phi $, then $\{$ $[E_{\alpha }^{+},E_{\alpha }^{-}]:\alpha \in I\}$ spans 
$\mathfrak{a}$.

Here is the key induction argument, the most significant step in the proof
of sufficiency.

\begin{theorem}
\label{indstep}Assume $G_{n}/K_{n}$ is a Type $III$ symmetric space of rank $%
n$, whose restricted root spaces all have dimension one. Suppose $(X,Y)$ is
an eligible pair in $\mathfrak{a}_{n}$. Assume that $X$ and $Y$ are not both
of type $SU(n)$ when the restricted root system is type $D_{n}$ and are not
both of type $SU(m)\times SU(m)$ for $n=2m-1$ when the restricted root
system is type $A_{2m-1}$. Assume, also, that the reduced pair, $(X^{\prime
},Y^{\prime }),$ is an absolutely continuous pair in $\mathfrak{p}_{n-1}$.
Then $(X,Y)$ is an absolutely continuous pair in $\mathfrak{p}_{n}.$
\end{theorem}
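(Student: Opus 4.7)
The plan is to apply Proposition~\ref{general strategy} with $L=2$, $X_1 = X$, $X_2 = Y$. Since all restricted root spaces are one-dimensional, each $\mathcal{N}_Z$ is spanned by $\{E_\alpha^- : \alpha \in \Phi^+ \setminus \Phi_Z\}$, and the embedding $\mathfrak{a}_{n-1} \hookrightarrow \mathfrak{a}_n$ identifies $\Phi_{n-1}$ with the roots of $\Phi$ that do not involve the coordinate $e_1$. I would take
\[
\Omega = \mathcal{V}_n \ominus \mathcal{V}_{n-1} = sp\{E_\alpha^- : \alpha \in \Phi^+,\ \alpha \text{ involves } e_1\},
\]
which contains $\Omega_X$ and $\Omega_Y$ by construction. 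The invariance $ad(\mathfrak{k}_{n-1})(\Omega) \subseteq sp\,\Omega$ is immediate from~(\ref{adrule}): bracketing a generator $E_\alpha^-$ of $\Omega$ against an $E_\beta^+$ with $\beta \in \Phi_{n-1}$ produces only vectors $E_{\alpha \pm \beta}^-$, and these roots still involve $e_1$.

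The central combinatorial task is condition~(i): exhibiting $k_1 \in K_{n-1}$ and a subset $\Omega_0 \subseteq \Omega_Y$ for which
\[
sp\{Ad(k_1)(\Omega_X),\ \Omega_Y \setminus \Omega_0\} = sp\,\Omega .
\]
I would proceed by cases on the types of $X$ and $Y$, taking $k_1$ to be a Weyl element that acts only on the indices $2,\ldots,n$. The goal is to permute the non-$e_1$ coordinates of $X$ so that the roots involving $e_1$ supplied by $Ad(k_1)\Omega_X$, together with those of $\Omega_Y$, cover every generator of $\Omega$ except a single distinguished root $\beta$. The eligibility inequality $S_X + S_Y \leq 2n$ (and its $A_n$ analogue) delivers precisely the dimension slack needed. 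The two hypothesis exclusions---$X,Y$ both of type $SU(n)$ in $D_n$, and both of type $SU(m)\times SU(m)$ in $A_{2m-1}$---are exactly the configurations for which this covering cannot be achieved (these are the exceptional pairs), and for the $A_{2m-1}$ case Lemma~\ref{eligible} further confirms that the reduced pair $(X',Y')$ may even fail to be eligible.

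With the residual root $\beta$ identified, I would set $\Omega_0 = sp\{E_\beta^-\}$ and $M = E_\beta^+ \in \mathfrak{k}_n$. The orthogonal complement of $sp\{\mathfrak{p}_{n-1}, \Omega\}$ in $\mathfrak{p}_n$ is the one-dimensional subspace $\mathbb{R} e_1 \subseteq \mathfrak{a}_n$, so condition~(iii) reduces to the statement that $[E_\beta^+, E_\beta^-] \in \mathfrak{a}$ has nonzero $e_1$-component, which is immediate from~(\ref{adrule}) because $\beta$ involves $e_1$. For~(ii), iterated application of~(\ref{adrule}) shows that $ad^k(M)$ carries each $E_\alpha^-$ with $\alpha \neq \beta$ into root vectors $E_\gamma^-$ with $\gamma \neq 0$, and hence into $sp\{\Omega, \mathfrak{p}_{n-1}\}$. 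A subtlety arises in the doubled-root types $C_n$ and $BC_n$: if $2\beta$ is also a root and $E_{2\beta}^- \in \mathcal{N}_Y$, then $ad^2(M)(E_{2\beta}^-)$ acquires an $\mathfrak{a}$-component with a nonzero $e_1$-part, so one must either arrange $2\beta \in \Phi_Y$ during the combinatorial step or enlarge $\Omega_0$ to absorb $E_{2\beta}^-$ (which does not disrupt~(iii) since the enlarged $\Omega_0$ still projects onto $\mathbb{R} e_1$).

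The main obstacle is step~(i): producing the required covering uniformly across the root systems $A_n$, $B_n$, $C_n$, $D_n$, $BC_n$ and every admissible pairing of types. The argument parallels the Lie algebra treatment of \cite[Prop.~2]{GHAbs}, but must be extended to account for the long, short, and multiple roots of types $B$, $C$, $BC$, and for the parity constraint on available Weyl elements in type $D_n$---a constraint that plausibly explains why the extra $D_n$ exclusion in the hypothesis has no counterpart in Lemma~\ref{eligible}.
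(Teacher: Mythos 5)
Your outline has the right shape and matches the paper's high-level plan: invoke Proposition~\ref{general strategy} with $\Omega$ essentially $\mathcal{V}_n\ominus\mathcal{V}_{n-1}$, a single distinguished root $\beta$, $\Omega_0=\{E_\beta^-\}$ and $M=E_\beta^+$, with condition (iii) reduced to the nonvanishing of the $\mathfrak{a}_n\ominus\mathfrak{a}_{n-1}$ component of $[E_\beta^+,E_\beta^-]$. But the step you defer, condition (i), is where the substance lies, and the mechanism you propose for it provably fails in two of the cases. First, if $k_1$ is a Weyl element acting on the indices $2,\dots,n$, then in type $B_n$ with both $X$ and $Y$ dominant $B$ type (so $J,N\geq 1$ after the standard normalization) the vector $E_{e_1}^-$ lies in $\Omega$ but in neither $\mathcal{N}_X$ nor $\mathcal{N}_Y$; a Weyl element only permutes root vectors up to sign, so $sp\{Ad(k_1)\Omega_X,\ \Omega_Y\setminus\Omega_0\}$ can never contain $E_{e_1}^-$. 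The paper gets around this by using the perturbed element $k_t=(\exp tE_{e_n}^+)k$, whose action via (\ref{adrule}) produces an $E_{e_1}^-$ component out of $E_{e_1\pm e_n}^-$; a similar non-Weyl element $\exp tH$ is needed when $X$ and $Y$ have different dominant types.

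Second, and more seriously, in type $C_n$ with both elements dominant $C$ type the vector $E_{2e_1}^-$ is unreachable for \emph{every} $k_1\in K_{n-1}$, not just Weyl elements: $ad(\mathfrak{k}_{n-1})$ preserves the grading of root vectors by the coefficient of $e_1$ in the root, so $Ad(k_1)\Omega_X$ and $\Omega_Y$ remain in the span of the coefficient-one vectors, while $E_{2e_1}^-$ has coefficient two and belongs to neither $\mathcal{N}_X$ nor $\mathcal{N}_Y$. Hence condition (i) is simply false for your choice of $\Omega$. Your proposed repair, enlarging $\Omega_0$ to absorb the doubled root, is not available because $\Omega_0$ must be contained in $\Omega_Y$ and $E_{2e_1}^-\notin\mathcal{N}_Y$ here. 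The paper instead shrinks $\Omega$ to exclude $E_{2e_1}^-$, observes that condition (ii) of the General Strategy then fails for $M=E_{e_1+e_n}^+$ (since $ad(M)E_{e_1-e_n}^-$ has an $E_{2e_1}^-$ component outside $sp\{\Omega,\mathfrak{p}_{n-1}\}$), and replaces the General Strategy in this case by a direct computation based on $sp\{E_{e_1-e_n}^-,\ Ad(\exp sM)\{E_{e_1-e_n}^-,E_{2e_n}^-\}\}=sp\{E_{e_1-e_n}^-,E_{2e_1}^-,E_{2e_n}^-\}$. So while your treatment of conditions (ii) and (iii) in the generic situation is sound, the $B_n$ and especially the $C_n$ dominant-type cases require arguments your plan does not supply, and these cannot be dismissed as routine bookkeeping.
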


\begin{proof}
As mentioned above, these are the symmetric spaces of Cartan type $AI$, $CI$%
, $DI$ ($q=p$) and $BI$ ($q=p+1$) and hence their restricted root systems
are types $A_{n}$, $C_{n}$, $D_{n}$ (with $n=p$) and $B_{n}$ (with $n=p$),
respectively.

The proof of the theorem is essentially the same as that given in \cite[%
Prop. 3]{GHAbs} for orbital measures in the classical, compact Lie algebras,
with an appropriate change of notation. But as the ideas are so important
for this paper we will present a condensed overview of the arguments for the
restricted root spaces of type $B_{n},C_{n}$ or $D_{n}$. Type $A_{n}$ is
similar to case 1(a) below, but easier, and is left for the reader.

Different arguments will be needed depending on the dominant type of $X$ and 
$Y$.

Case 1: Neither $X$ nor $Y$ are dominant $SU$ type.

Suppose $S_{X}=2J$ and $S_{Y}=2N$. Applying a Weyl conjugate if necessary
(which corresponds to the Ad-action of an element in $K$) we can assume,
without loss of generality, that%
\begin{equation*}
\Omega _{X}=\{E_{e_{1}\pm e_{j}}^{-}:j>J\}\text{ and }\Omega
_{Y}=\{E_{e_{1}\pm e_{j}}^{-}:j>N\}.
\end{equation*}

Case 1(a): The restricted root system\ is type $D_{n}$. Put%
\begin{equation*}
\Omega =\{E_{e_{1}\pm e_{j}}^{-}:j=2,...,n\}\text{ and }\Omega
_{0}=\{E_{e_{1}+e_{n}}^{-}\}.
\end{equation*}%
Property (\ref{adrule}) ensures that $ad(H)(\Omega )\subseteq sp\Omega $
whenever $H\in \mathfrak{k}_{n-1}$.

Take $k\in K_{n-1}$ the Weyl conjugate that permutes the letters $1+j$ with $%
N+j$ for $j=1,...,J-1$. The eligibility assumption ensures $\{Ad(k)(\Omega
_{Y}),$ $\Omega _{X}\diagdown \Omega _{0}\}=\Omega $, thus Prop. \ref%
{general strategy}(i) is satisfied.

Set $M=E_{e_{1}+e_{n}}^{+}\in \mathfrak{k}_{n}$ and note that Prop. \ref%
{general strategy}(ii) is also met.

The projection of $ad(M)(E_{e_{1}+e_{n}}^{-})$ maps onto the orthogonal
complement of $sp\{\Omega ,\mathfrak{p}_{n-1}\}$ in $\mathfrak{p}_{n}$ since 
$sp\{\Omega ,\mathfrak{p}_{n-1}\}$ is of co-dimension one, thus (iii) is
also fulfilled with any $s>0$. Applying Prop. \ref{general strategy}, we
conclude that $\mu _{X}\ast \mu _{Y}$ is absolutely continuous.

Case 1(b): The restricted root space is type $B_{n}$.The arguments are
similar. Take%
\begin{equation*}
\Omega =\{E_{e_{1}\pm e_{j}}^{-},E_{e_{1}}^{-}:j=2,...,n\}\text{ and }\Omega
_{0}=\{E_{e_{1}+e_{n}}^{-}\}.
\end{equation*}%
Let $k\in K_{n-1}$ be the Weyl conjugate that permutes the letters $1+j$
with $N+j$ for $j=1,...,J-1$, as in the previous case, and let $k_{t}=\left(
\exp tE_{e_{n}}^{+}\right) k\in K_{n-1}$ for small $t>0$. Since 
\begin{equation*}
Ad(k_{t})(E_{e_{1}\pm e_{n}}^{-})=a(t)E_{e_{1}\pm
e_{n}}^{-}\;+tb(t)E_{e_{1}}^{-}\;+t^{2}c(t)E_{e_{1}\mp e_{n}}^{-}
\end{equation*}%
where $a(t)\rightarrow 1$ as $t\rightarrow 0$, and $b(t)$, $c(t)$ converge
to non-zero scalars, one can deduce that 
\begin{equation*}
sp\{Ad(k)(\Omega _{Y}),\text{ }\Omega _{X}\diagdown \Omega _{0}\}=sp\Omega 
\text{.}
\end{equation*}%
Now take $M=E_{e_{1}+e_{n}}^{+}$ and apply the general strategy.

Case 1(c): The restricted root space is type $C_{n}$. Here we begin with 
\begin{equation*}
\Omega =\{E_{e_{1}\pm e_{j}}^{-}:j=2,...,n\}\text{ and }\Omega
_{0}=\{E_{e_{1}+e_{n}}^{-}\},
\end{equation*}%
and let $k\in K_{n-1}$ be the Weyl conjugate that permutes the letters $1+j$
with $N+j$ for $j=1,...,J-1$. As in the proof of the general strategy, the
induction assumption implies there is some $h\in K_{n-1}$ such that 
\begin{equation*}
sp\{Ad(h)\mathcal{N}_{Y},\mathcal{N}_{X}\diagdown \Omega _{0}\}=sp\{\Omega ,%
\mathfrak{p}_{n-1}\}\text{.}
\end{equation*}

We again take $M=E_{e_{1}+e_{n}}^{+}\in \mathfrak{k}_{n}$, but in this case
cannot call directly upon the general strategy as it is not true that $%
ad^{k}(M)(H)\in sp\{\Omega ,\mathfrak{p}_{n-1}\}$ for all $H\in \mathcal{N}%
_{X}\diagdown \Omega _{0}$. However, one can check that for small $s>0$, 
\begin{equation*}
sp\{E_{e_{1}-e_{n}}^{-},Ad(\exp
sM)\{E_{e_{1}-e_{n}}^{-},E_{2e_{n}}^{-}\}\}=sp%
\{E_{e_{1}-e_{n}}^{-},E_{2e_{1}}^{-},E_{2e_{n}}^{-}\}
\end{equation*}%
and using this fact it can be shown that%
\begin{equation*}
sp\{Ad(h)\mathcal{N}_{Y},Ad(\exp sM)(\mathcal{N}_{X}\diagdown \Omega _{0})\}=%
\mathfrak{p}_{n}\ominus sp(a_{1})
\end{equation*}%
where $a_{1}$ is the standard basis vector of $\mathfrak{a}_{n}\ominus 
\mathfrak{a}_{n-1}$. Standard arguments then allow one to deduce that for
small enough $s>0$, 
\begin{equation*}
sp\{Ad(h)\mathcal{N}_{Y},Ad(\exp sM)(\mathcal{N}_{X})\}=\mathfrak{p}_{n}%
\text{.}
\end{equation*}%
For the details of this technical argument we refer the reader to the proof
of Prop. 3 Case 1(c) in \cite{GHAbs}.

Case 2: Both $X,Y$ are dominant type $SU$. First, assume the restricted root
space is either $B_{n}$ or $C_{n}$. Let 
\begin{equation*}
\Omega =\{E_{e_{1}\pm e_{j}}^{-},E_{(2)e_{1}}^{-}:j=2,...,n\}\text{ and }%
\Omega _{0}=\{E_{(2)e_{1}}^{-}\}
\end{equation*}%
(with the choice $E_{2e_{1}}^{-}$ or $E_{e_{1}}^{-}$ depending on whether
the underlying root system is type $C_{n}$ or $B_{n}$). $\ $Applying a Weyl
conjugate change of sign, as needed, there is no loss of generality in
assuming $\Omega _{X}$ contains all $E_{e_{1}+e_{j}}^{-}$ for $j\geq 2$ and $%
E_{(2)e_{1}}^{-},$ and $\Omega _{Y}$ contains all $E_{e_{1}-e_{j}}^{-}$ for $%
j\geq 2$ and (again) $E_{(2)e_{1}}^{-}$. Hence $\{\Omega _{Y},$ $\Omega
_{X}\diagdown \Omega _{0}\}=\Omega $. Now take $M=E_{(2)e_{1}}^{+}$.

If, instead, the resricted root space is $D_{n}$, then we take $\Omega
=\{E_{e_{1}\pm e_{j}}^{-},:j=2,...,n\}$. Since $X$ and $Y$ are not both of
type $SU(n)$, we can assume $\Omega _{X}$ contains all the roots $%
E_{e_{1}+e_{j}}^{-}$ for $j\geq 2$ and both $E_{e_{1}\pm e_{n}}^{-},$ and $%
\Omega _{Y}$ contains all $E_{e_{1}-e_{j}}^{-}$ for $2\leq j\leq n-1$ and at
least one of $E_{e_{1}\pm e_{n}}^{-}$. Take $\Omega _{0}$ to be the one of $%
E_{e_{1}\pm e_{n}}^{-}$ that belongs to $\Omega _{Y}$ and argue as above.

Case 3: $X,Y$ are of different dominant type, say $X$ is dominant $SU$ type
with $S_{X}=m$ and $S_{Y}=2J$. Take $\Omega =\mathcal{V}_{n}\ominus \mathcal{%
V}_{n-1}$. Applying suitable Weyl conjugates, we can assume 
\begin{eqnarray*}
\Omega _{X}
&=&\{E_{e_{1}+e_{j}}^{-},E_{e_{1}-e_{k}}^{-},E_{(2)e_{1}}^{-}:j\geq 2,k>m\}%
\text{ and} \\
\Omega _{Y} &=&\{E_{e_{1}\pm e_{j}}^{-}:2\leq j\leq n-J+1\}
\end{eqnarray*}%
(with appropriate modifications in type $D_{n}$). Put 
\begin{equation*}
\Omega _{0}=\{E_{e_{1}+e_{n-J+1}}^{-}\}\subseteq \Omega _{X}\cap \Omega _{Y}.
\end{equation*}%
If $n-J+1\geq m$, then $\{\Omega _{Y},\Omega _{X}\diagdown \Omega
_{0}\}=\Omega $ and the rest of the argument is easy. Otherwise, let 
\begin{equation*}
\Omega _{1}=\{E_{e_{1}+e_{k}}^{-}:2\leq k\leq n-J\}\subseteq (\Omega
_{Y}\cap \Omega _{X})\diagdown \Omega _{0}.
\end{equation*}%
Define%
\begin{equation*}
H=\sum_{j=2}^{m+J-n}E_{e_{j}+e_{n-J+j}}^{+}.
\end{equation*}%
The eligibility condition implies $\Omega _{1}\supseteq
\{E_{e_{1}+e_{k}}^{-}:k\leq m+J-n\}$. It follows that%
\begin{equation*}
sp\{ad(H)(\Omega _{1}),\Omega _{Y}\diagdown \Omega _{1},\Omega _{X}\diagdown
\Omega _{0}\}=sp\Omega .
\end{equation*}%
A linear algebra argument implies there is some $k\in K_{n-1}$ (namely, $%
k=\exp tH$ for sufficiently small $t>0$) such that%
\begin{equation*}
sp\{Ad(k)(\Omega _{Y}),\Omega _{X}\diagdown \Omega _{0}\}=sp\Omega .
\end{equation*}%
Now, take $M=E_{e_{1}+e_{n-J+1}}^{+}$ and apply the general strategy to
complete the argument.
\end{proof}

\subsection{Another sufficient condition}

To use the induction argument outlined in the previous subsection we will,
of course, need to do the base cases. A sufficient condition for absolute
continuity that will be helpful to us for in doing this (and also for
dealing with some special tuples when the restricted root spaces are higher
dimensional) is the following variant of a result of \cite{Wr}.

By the rank of a root subsystem we mean the dimension of the Euclidean space
it spans. By the dimension of a root subsystem $\Phi _{0}$, we mean 
\begin{equation*}
\dim \Phi _{0}:=\dim sp\{X_{\alpha }^{-}:X_{\alpha }\in \mathfrak{g}_{\alpha
}\text{, }\alpha \in \Phi _{0}\}.
\end{equation*}%
This is the cardinality of $\Phi _{0}^{+}$ counted by multiplicity of the
corresponding restricted root spaces. When the multiplicities of all the
restricted root spaces coincide, say are equal to $r$, then $\dim \Phi
_{0}=r\cdot card(\Phi _{0}^{+})$.

\begin{theorem}
\label{WrCriteria} Assume $G/K$ is a symmetric space with restricted root
system $\Phi $ and Weyl group $W$. Suppose $Z_{1},...,Z_{m}$ $\in \mathfrak{a%
}$. Assume%
\begin{equation}
(m-1)\left( \dim \Phi -\dim \Psi \right) -1\geq \sum_{i=1}^{m}\left( \dim
\Phi _{Z_{i}}-\min_{\sigma \in W}\dim (\Phi _{Z_{i}}\cap \sigma (\Psi
))\right)  \label{WrC}
\end{equation}%
for all root subsystems $\Psi \subseteq \Phi $ of co-rank $1$ and having the
property that $sp(\Psi )\cap \Phi =\Psi $. Then $\mu _{Z_{1}}\ast \cdot
\cdot \cdot \ast \mu _{Z_{m}}$ is absolutely continuous.
\end{theorem}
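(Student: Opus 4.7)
The plan is to verify condition (3) of Proposition \ref{keyprop}: exhibit $(k_1,\dots,k_m)\in K^m$ with $sp\{Ad(k_i)\mathcal{N}_{Z_i}\}=\mathfrak{p}$. I argue by contradiction, supposing $V(\vec k):=\sum_i Ad(k_i)\mathcal{N}_{Z_i}$ is always a proper subspace of $\mathfrak{p}$. For each $\vec k$, pick $h=h(\vec k)\in V(\vec k)^\perp\setminus\{0\}$; since simultaneous left translation $\vec k\mapsto k_0\vec k$ conjugates $h$ by $Ad(k_0)$ and every $Ad(K)$-orbit in $\mathfrak{p}$ meets $\mathfrak{a}$, I may arrange $h\in\mathfrak{a}\setminus\{0\}$. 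Its annihilator $\Psi:=\{\alpha\in\Phi:\alpha(h)=0\}$ is then automatically a closed root subsystem ($sp(\Psi)\cap\Phi=\Psi$), which by generic deformation of $\vec k$ I further take to have co-rank exactly one.

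The core of the argument is a dimension estimate on the incidence variety
\[
E:=\{(h,\vec k)\in(\mathfrak{a}\setminus\{0\})\times K^m: Ad(k_i)\mathcal{N}_{Z_i}\perp h\text{ for all }i\}.
\]
Modding out by the simultaneous $K$-action, $K\backslash E$ must surject onto $K\backslash K^m$, which has dimension $(m-1)\dim K$. For fixed $h\in\mathfrak{a}$ with annihilator the co-rank one $\Psi$, set $\Sigma_i(h):=\{k\in K:Ad(k^{-1})h\in\mathcal{N}_{Z_i}^\perp\}$, so the fiber over $h$ in $E$ is $\prod_i\Sigma_i(h)$. A tangent-space computation based on (\ref{adZ}), applied to the orbit $Ad(K)h$ at each Weyl conjugate $\sigma h$ together with the decomposition $\mathcal{N}_{Z_i}^\perp=\mathfrak{a}\oplus sp\{X_\alpha^-:\alpha\in\Phi_{Z_i}^+\}$, gives local intersection dimension $\dim\Phi_{Z_i}-\dim(\Phi_{Z_i}\cap\sigma(\Psi))$ at $\sigma h$; maximizing over $\sigma\in W$ yields
\[
\operatorname{codim}_K\Sigma_i(h)=(\dim\Phi-\dim\Psi)-\bigl(\dim\Phi_{Z_i}-\min_{\sigma\in W}\dim(\Phi_{Z_i}\cap\sigma(\Psi))\bigr).
\]

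Since co-rank one $h$'s form a one-dimensional stratum in $\mathfrak{a}$, the piece of $K\backslash E$ attached to a given such $\Psi$ has dimension at most $1+(m-1)\dim K-\sum_i\operatorname{codim}_K\Sigma_i(h)$. The finite union over closed co-rank one $\Psi$'s must cover $K\backslash K^m$, so for at least one such $\Psi$ we obtain $\sum_i\operatorname{codim}_K\Sigma_i(h)\leq 1$, i.e.,
\[
m(\dim\Phi-\dim\Psi)-\sum_{i=1}^m\bigl(\dim\Phi_{Z_i}-\min_{\sigma\in W}\dim(\Phi_{Z_i}\cap\sigma(\Psi))\bigr)\leq 1.
\]
Combined with the hypothesis (\ref{WrC}) for this same $\Psi$, which upper-bounds $\sum_i(\dim\Phi_{Z_i}-\min\dim(\cdot))$ by $(m-1)(\dim\Phi-\dim\Psi)-1$, this forces $\dim\Phi\leq\dim\Psi$, contradicting the properness of $\Psi$. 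The principal technical obstacle is the codimension computation, particularly verifying that the Weyl minimum captures the full local dimension of $Ad(K)h\cap\mathcal{N}_{Z_i}^\perp$ near the optimal $\sigma h$ and that no off-chamber intersections inflate this dimension; boundary strata with higher-co-rank annihilators require separate but analogous bookkeeping, and are handled by the same incidence-variety count with the appropriate stratum dimension in place of $1$.
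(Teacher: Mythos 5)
Your architecture is the same as the paper's (which follows Wright): reduce via Proposition \ref{keyprop} to showing that the kernels $n_{Z_i}=\ker(ad\,Z_i)|_{\mathfrak{p}}=\mathcal{N}_{Z_i}^{\perp}$ have trivial intersection for generic $(k_1,\dots,k_m)$, reduce to perpendicular vectors $h$ whose annihilator is a co-rank one closed subsystem $\Psi$, and then run a dimension count on an incidence variety whose fiber codimensions are exactly the quantities appearing in (\ref{WrC}). The arithmetic at the end is correct. However, the heart of the proof is the codimension formula
\begin{equation*}
\operatorname{codim}_{K}\Sigma _{i}(h)\geq \left( \dim \Phi -\dim \Psi
\right) -\left( \dim \Phi _{Z_{i}}-\min_{\sigma \in W}\dim (\Phi
_{Z_{i}}\cap \sigma (\Psi ))\right) ,
\end{equation*}
and you assert it rather than prove it, flagging it yourself as ``the principal technical obstacle.'' This is a genuine gap, not a routine verification. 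The difficulty is precisely the one you name: to see that the Weyl minimum controls the local geometry, one must show that at \emph{any} point $k$ of $\Sigma_i(h)$ the pair $(h,\,Ad(k)Z_i)$ can be simultaneously conjugated so that both land in $\mathfrak{a}$, whence $Ad(k)Z_i$ becomes $\sigma(Z_i)$ for some $\sigma\in W$. The paper proves this by observing that $(G_h)_0/(K_h)_0$ is itself a symmetric space containing $\mathfrak{a}$ as a maximal abelian subspace of $\mathfrak{p}_h$: since $Ad(h_1k)Z_i\in\mathfrak{p}_h$ after first conjugating $h$ back into $\mathfrak{a}$, an element of $(K_h)_0$ moves it into $\mathfrak{a}$ without disturbing $h$, and two $Ad(K)$-related elements of $\mathfrak{a}$ are Weyl conjugate. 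Only then does the rank of the defining map $k\mapsto[h,Ad(k)Z_i]$ compute, via (\ref{adrule}), to $\dim(\mathcal{N}_h\cap\mathcal{N}_{\sigma(Z_i)})$. Without this step, nothing rules out larger intersections of $Ad(K)h$ with $n_{Z_i}$ away from the Weyl orbit of $h$, and the $\min_{\sigma\in W}$ in your formula is unjustified. Note also that what you need (and what is actually provable) is a pointwise lower bound on the rank of the defining equations at every zero, not the equality of codimensions you state.

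Two smaller points. First, your reduction to co-rank one annihilators via ``generic deformation of $\vec k$'' is not the right mechanism: you cannot perturb $\vec k$ to shrink the annihilator while preserving perpendicularity. The correct statement is that if $\bigcap_j Ad(k_j)n_{Z_j}\neq\{0\}$ and contains some $Z$, then it automatically contains the whole subspace $\{H\in\mathfrak{a}':\alpha(H)=0\ \forall\alpha\in\Phi_Z\}$ for a maximal abelian $\mathfrak{a}'\ni Z$, hence a maximally singular element; this also makes your closing remark about ``boundary strata with higher-co-rank annihilators'' unnecessary. Second, your bookkeeping around the $K$-quotient and the radial direction of $h$ is loose (the one-dimensional stratum of $h$'s and the $\dim O_h$ of its $K$-saturation both need to be accounted for consistently); the paper sidesteps this by fixing one representative orbit $O_Z$ per co-rank one subsystem, using that the perpendicularity condition is scale-invariant in $h$, and bounding $\dim f_Z^{-1}(0)<\dim K^m$ directly for the map $f_Z(Z',\vec k)=([Z',Ad(k_i)Z_i])_i$. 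I recommend recasting your count in that form.
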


We first prove several lemmas. Throughout, $Z_{1},...,Z_{m}\in \mathfrak{a}$
will be fixed. For the proof, denote by%
\begin{equation*}
n_{X}=\ker (adX)|_{\mathfrak{p}}=\{Y\in \mathfrak{p}:[X,Y]=0\}.
\end{equation*}

\begin{lemma}
The sum $O_{Z_{1}}+\cdot \cdot \cdot +O_{Z_{m}}$ has non-empty interior in $%
\mathfrak{p}$ if and only if there exist $k_{1},...,k_{m}\in K$ such that 
\begin{equation*}
\bigcap\limits_{j=1}^{m}Ad(k_{j})n_{Z_{j}}=\{0\}.
\end{equation*}
\end{lemma}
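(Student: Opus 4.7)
The plan is to realize $\mathcal{N}_{Z_j}$ and $n_{Z_j}$ as orthogonal complements of each other in $\mathfrak{p}$ with respect to an $\mathrm{Ad}(K)$-invariant inner product, and then dualize the spanning condition of Proposition \ref{keyprop}(3) into an intersection condition.

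First, I would fix the bilinear form $B_\theta(X,Y) = -B(X,\theta Y)$ on $\mathfrak{g}$, where $B$ is the Killing form. This is $\mathrm{Ad}(K)$-invariant, positive definite, and makes $\mathfrak{k} \perp \mathfrak{p}$. Using $\theta[X,Y] = [\theta X,\theta Y]$ and the invariance of $B$ under $\mathrm{ad}$, a short calculation gives $\mathrm{ad}(X)^* = -\mathrm{ad}(\theta X)$ with respect to $B_\theta$. When $X = Z \in \mathfrak{p}$, so that $\theta Z = -Z$, this says $\mathrm{ad}(Z)$ is self-adjoint. In particular, the restrictions $\mathrm{ad}(Z_j)|_{\mathfrak{k}} \colon \mathfrak{k} \to \mathfrak{p}$ and $\mathrm{ad}(Z_j)|_{\mathfrak{p}} \colon \mathfrak{p} \to \mathfrak{k}$ are adjoints of one another, and therefore
\begin{equation*}
\mathcal{N}_{Z_j} \;=\; \mathrm{Im}\,\mathrm{ad}(Z_j)|_{\mathfrak{k}} \;=\; \bigl(\ker\mathrm{ad}(Z_j)|_{\mathfrak{p}}\bigr)^{\perp} \;=\; n_{Z_j}^{\perp}
\end{equation*}
where the orthogonal complement is taken inside $\mathfrak{p}$.

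Next I would use that $\mathrm{Ad}(k)$ for $k \in K$ preserves both $\mathfrak{p}$ and $B_\theta$, so it sends orthogonal complements in $\mathfrak{p}$ to orthogonal complements. Consequently $\mathrm{Ad}(k_j)\mathcal{N}_{Z_j} = (\mathrm{Ad}(k_j)n_{Z_j})^{\perp}$ in $\mathfrak{p}$, and by the standard duality between sums and intersections of orthogonal subspaces,
\begin{equation*}
sp\{\mathrm{Ad}(k_j)\mathcal{N}_{Z_j} : j = 1,\dots,m\} = \mathfrak{p} \quad\Longleftrightarrow\quad \bigcap_{j=1}^{m}\mathrm{Ad}(k_j)n_{Z_j} = \{0\}.
\end{equation*}

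Finally, I would invoke the equivalence of (2) and (3) in Proposition \ref{keyprop}: non-empty interior of $O_{Z_1}+\cdots+O_{Z_m}$ corresponds to the existence of $k_1 = \mathrm{Id}, k_2,\dots,k_m \in K$ for which the left-hand span fills $\mathfrak{p}$. The normalization $k_1 = \mathrm{Id}$ is harmless for the intersection formulation: if an $m$-tuple $(k_1,\dots,k_m)$ makes $\bigcap_j \mathrm{Ad}(k_j)n_{Z_j} = \{0\}$, then applying $\mathrm{Ad}(k_1^{-1})$ (an invertible map of $\mathfrak{p}$) shows the same holds for $(\mathrm{Id}, k_1^{-1}k_2,\dots,k_1^{-1}k_m)$, and conversely. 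Combining these equivalences yields the lemma. The only non-routine step is the self-adjointness of $\mathrm{ad}(Z)$ for $Z \in \mathfrak{p}$, which is the standard computation sketched above and is the crux of the argument.
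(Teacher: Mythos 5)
Your proof is correct and follows essentially the same route as the paper: identify $\mathcal{N}_{Z_j}=\mathrm{Im}\,ad(Z_j)|_{\mathfrak{k}}$ with $n_{Z_j}^{\perp}$ in $\mathfrak{p}$ via an $Ad(K)$-invariant inner product, then dualize the spanning condition of Proposition \ref{keyprop}(3) into the intersection condition. The paper compresses this to ``a Hilbert space argument taking the inner product given by the Killing form''; you supply the details it omits (the self-adjointness of $ad(Z)$ for $Z\in\mathfrak{p}$ with respect to $B_{\theta}$, and the harmlessness of the $k_{1}=\mathrm{Id}$ normalization), which is a useful elaboration rather than a different argument.
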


\begin{proof}
This is a Hilbert space argument taking the inner product given by the
Killing form. From Prop. \ref{keyprop}, $O_{Z_{1}}+\cdot \cdot \cdot
+O_{Z_{m}}$ has non-empty interior if and only if there is some $k_{1}=Id$, $%
k_{2},...,k_{m}\in K$ such that 
\begin{equation}
sp\{Ad(k_{j})\mathcal{N}_{Z_{j}}:j=1,...,m\}=\mathfrak{p.}  \label{ref1}
\end{equation}%
We note that $sp\mathcal{N}_{Z_{j}}=\text{Im } ad(Z_{j})|_{\mathfrak{k}}$.
Thus (\ref{ref1}) holds if and only if 
\begin{eqnarray*}
\mathfrak{p} &\mathfrak{=}&\sum_{j=1}^{m}Ad(k_{j})\text{Im }ad(Z_{j})|_{%
\mathfrak{k}}=\sum_{j=1}^{m}Ad(k_{j})\left( \ker (ad(Z_{j})|_{\mathfrak{p}%
})\right) ^{\bot } \\
&=&\sum_{j=1}^{m}Ad(k_{j})\left( n_{Z_{j}}\right) ^{\bot }=\left(
\bigcap\limits_{j=1}^{m}Ad(k_{j})n_{Z_{j}}\right) ^{\bot },
\end{eqnarray*}%
where the orthogonal complements are all understood to be in $\mathfrak{p}$.
\end{proof}

We call $Z\in \mathfrak{p}$ \textit{maximally singular} if whenever $%
W=Ad(k)Z\in \mathfrak{a}$ for $k\in K$, then $\Phi _{W}$ is of co-rank one.
This is equivalent to saying $O_{Z}$ contains an element in $\mathfrak{a}$
whose set of annihilating roots is a co-rank one root subsystem.

\begin{lemma}
If the intersection 
\begin{equation*}
\bigcap\limits_{j=1}^{m}Ad(k_{j})n_{Z_{j}}\neq \{0\}
\end{equation*}%
for some $k_{j}\in K$, then the intersection contains a maximally singular
element.
\end{lemma}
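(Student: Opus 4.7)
The plan is to transfer the question to $\mathfrak{a}$ and then carve out a maximally singular direction using the hyperplane arrangement of $\Phi$. Pick any nonzero $Y \in V := \bigcap_{j=1}^{m} Ad(k_j) n_{Z_j}$ and choose $k \in K$ with $W := Ad(k^{-1})Y \in \overline{\mathfrak{a}^{+}} \setminus \{0\}$. Set $V' := Ad(k^{-1})V = \bigcap_j n_{\tilde{W}_j}$, where $\tilde{W}_j := Ad(k^{-1}k_j)Z_j \in \mathfrak{p}$; then $W \in V' \cap \mathfrak{a}$, and since maximal singularity is $Ad(K)$-invariant, it suffices to produce a maximally singular element of $V'$.

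The first substantive step is to identify $V_0 := V' \cap \mathfrak{a}$ as the annihilator of a root subsystem. Write each $\tilde{W}_j$ in the form $H_j + \sum_{\alpha \in \Phi^{+}}(X_\alpha^{(j)} - \theta X_\alpha^{(j)})$ with $H_j \in \mathfrak{a}$ and $X_\alpha^{(j)} \in \mathfrak{g}_\alpha$. For $H \in \mathfrak{a}$, equation (\ref{adZ}) yields
\begin{equation*}
[H, \tilde{W}_j] = \sum_{\alpha \in \Phi^{+}} \alpha(H)(X_\alpha^{(j)} + \theta X_\alpha^{(j)}),
\end{equation*}
and the linear independence of the subspaces $\{X + \theta X : X \in \mathfrak{g}_\alpha\}$ across distinct $\alpha \in \Phi^{+}$ forces $\alpha(H) = 0$ for every $\alpha$ with $X_\alpha^{(j)} \neq 0$. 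Setting $S := \bigcup_{j}\{\alpha \in \Phi^{+} : X_\alpha^{(j)} \neq 0\}$ and $\Psi_0 := \Phi \cap sp(S)$, this yields $V_0 = \{H \in \mathfrak{a}: \alpha(H) = 0 \text{ for all } \alpha \in \Psi_0\}$, so $\dim V_0 = n - \mathrm{rank}(\Psi_0)$, and $W \in V_0$ ensures $V_0 \neq \{0\}$.

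To finish, I carve out a maximally singular line inside $V_0$. Set $d := \dim V_0 \geq 1$. If some nonzero $H \in V_0$ were annihilated by every root in $\Phi \setminus \Psi_0$, it would be annihilated by all of $\Phi$ and hence vanish; thus the restrictions to $V_0$ of roots in $\Phi \setminus \Psi_0$ span $V_0^{\ast}$, and I may pick $\beta_1, \dots, \beta_{d-1} \in \Phi \setminus \Psi_0$ whose restrictions are linearly independent. The line $L := V_0 \cap \bigcap_{i=1}^{d-1}\ker \beta_i$ is one-dimensional, and any $H \in L \setminus \{0\}$ has $\Phi_H \supseteq \Psi_0 \cup \{\pm \beta_1, \dots, \pm \beta_{d-1}\}$, a set of rank at least $\mathrm{rank}(\Psi_0) + (d-1) = n-1$. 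Nonvanishing of $H$ precludes rank $n$, so the rank equals $n-1$ and $H$ is maximally singular; then $Ad(k)H \in V$ is the desired element. The main subtlety is the algebraic identification of $V_0$ in the middle step, which rests on equation (\ref{adZ}); once that is in place, the hyperplane arrangement construction of $L$ is routine linear algebra.
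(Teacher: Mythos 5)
Your argument is correct and is essentially the paper's proof in disguise: conjugating the whole configuration by $k^{-1}$ so that the chosen element lands in the fixed $\mathfrak{a}$ is the same as the paper's choice of a maximal abelian subalgebra $\mathfrak{a}'$ through $Z$, and the computation identifying $V_{0}$ as the annihilator of the root subsystem $\Psi_{0}$ is the paper's coefficient-killing argument for $\mathfrak{a}'_{Z}\subseteq\bigcap_{j}Ad(k_{j})n_{Z_{j}}$. The only (welcome) difference is that you make explicit the final step --- cutting $V_{0}$ down by kernels of $d-1$ further roots to reach a co-rank-one annihilator --- which the paper leaves as ``simply choose a maximally singular element.''
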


\begin{proof}
Suppose $Z\in \bigcap\limits_{j=1}^{m}Ad(k_{j})n_{Z_{j}}$ for some $Z\neq 0$%
. Choose a maximal abelian subalgebra $\mathfrak{a}^{\prime }$ of $\mathfrak{%
p}$ that contains $Z$ and let 
\begin{equation*}
\mathfrak{a}_{Z}^{\prime }=\{H\in \mathfrak{a}^{\prime }\text{ }:\alpha (H)=0%
\text{ for all }\alpha \in \Phi _{Z}\},
\end{equation*}%
where we understand the root system $\Phi $ to be with respect to this
subalgebra $\mathfrak{a}^{\prime }$. For each $\alpha \in \Phi ^{+}$, choose
bases $\{E_{\alpha }^{(i)}:i\in I_{\alpha }\}$ for the restricted root
spaces $\mathfrak{g}_{\alpha }$.

Temporarily fix an index $j$. Since $Ad(k_{j})Z_{j}\in \mathfrak{p}$, we can
find $H\in \mathfrak{a}^{\prime }$ and coefficients $c_{\alpha }^{(i)}$
(depending on $j$) such that%
\begin{equation*}
Ad(k_{j})Z_{j}=H+\sum_{\alpha \in \Phi ^{+}}\sum_{i\in I_{\alpha }}c_{\alpha
}^{(i)}E_{\alpha }^{(i)-}.
\end{equation*}%
Now, $Z\in Ad(k_{j})n_{Z_{j}}$, hence there is some $Y_{j}\in n_{Z_{j}}$
such that $Z=Ad(k_{j})Y_{j}$. Thus%
\begin{equation*}
\lbrack
Z,Ad(k_{j})Z_{j}]=[Ad(k_{j})Y_{j},Ad(k_{j})Z_{j}]=Ad(k_{j})[Y_{j},Z_{j}]=0.
\end{equation*}%
But we also have 
\begin{equation*}
\lbrack Z,Ad(k_{j})Z_{j}]=[Z,H+\sum_{\alpha \in \Phi ^{+}}\sum_{i}c_{\alpha
}^{(i)}E_{\alpha }^{(i)-}]=\sum_{\alpha \in \Phi ^{+}}\sum_{i\in I_{\alpha
}}\alpha (Z)c_{\alpha }^{(i)}E_{\alpha }^{(i)-}.
\end{equation*}%
It follows that $c_{\alpha }^{(i)}=0$ for all $\alpha \in \Phi ^{+}$ such
that $\alpha (Z)\neq 0$, i.e., for all $\alpha \notin \Phi _{Z}$. Hence 
\begin{equation*}
Ad(k_{j})Z_{j}=H+\sum_{\alpha \in \Phi _{Z}^{+}}\sum_{i\in I_{\alpha
}}c_{\alpha }^{(i)}E_{\alpha }^{(i)-}.
\end{equation*}%
Pick $H^{\prime }\in \mathfrak{a}_{Z}^{\prime }$. Since $\alpha (H^{\prime
})=0$ for all $\alpha \in \Phi _{Z}$, we have 
\begin{equation*}
\lbrack H^{\prime },Ad(k_{j})Z_{j}]=[H^{\prime },H+\sum_{\alpha \in \Phi
_{Z}^{+}}\sum_{i\in I_{\alpha }}c_{\alpha }^{(i)}E_{\alpha }^{(i)-}]=0.
\end{equation*}%
Thus $\mathfrak{a}_{Z}^{\prime }\subseteq Ad(k_{j})n_{Z_{j}}$ for all $j$
and hence is contained in $\bigcap\limits_{j=1}^{m}Ad(k_{j})n_{Z_{j}}$. To
complete the proof, simply choose a maximally singular element in $\mathfrak{%
a}_{Z}^{\prime }$.
\end{proof}

\begin{remark}
Note that the proof actually shows that the intersection contains all the
elements of $\mathfrak{a}_{Z}^{\prime }$ whose set of annihilating roots is
co-rank one in $\mathfrak{a}^{\prime }$.
\end{remark}

There are only finitely many co-rank one root subsystems of the (original)
restricted root system $\Phi $, so we may choose a finite set $S\subseteq 
\mathfrak{a}$ such that $\{\Phi _{Z}:Z\in S\}$ is the complete set. For each 
$Z\in S$, consider the map $f_{Z}:O_{Z}\times K^{m}\rightarrow \mathfrak{k}%
^{m}$ defined by%
\begin{equation*}
f_{Z}(Z^{\prime },k_{1},...,k_{m})=\left( [Z^{\prime
},Ad(k_{1})Z_{1}],...,[Z^{\prime },Ad(k_{m})Z_{m}]\right) .
\end{equation*}%
Note that $f_{Z}(Z^{\prime },k_{1},...,k_{m})=0$ if and only if $Z^{\prime
}\in \bigcap\limits_{j=1}^{m}Ad(k_{j})n_{Z_{j}}$.

For $Z\in \mathfrak{a}$, set%
\begin{eqnarray*}
G_{Z} &=&\{g\in G:Ad(g)Z=Z\}\text{ and } \\
K_{Z} &=&\{k\in K:Ad(k)Z=Z\}.
\end{eqnarray*}%
The associated Lie algebras are given by:%
\begin{eqnarray*}
\mathfrak{g}_{Z} &=&\{X\in \mathfrak{g}:[X,Z]=0\} \\
\mathfrak{k}_{Z} &=&\{X\in \mathfrak{k}:[X,Z]=0\} \\
\mathfrak{p}_{Z} &=&\{X\in \mathfrak{p}:[X,Z]=0\}.
\end{eqnarray*}

Let $\left( G_{Z}\right) _{0}$ and $\left( K_{Z}\right) _{0}$ be the
connected components containing the identity of $G$. Their Lie algebras are
also $\mathfrak{g}_{Z}$ and $\mathfrak{k}_{Z}$, respectively.

\begin{lemma}
The pair $\left( G_{Z}\right) _{0}/\left( K_{Z}\right) _{0}$ is a symmetric
space whose rank is equal to that of the dimension of $\mathfrak{a}$.
Moreover, $\mathfrak{g}_{Z}$ $=\mathfrak{k}_{Z}\oplus \mathfrak{p}_{Z}$.
\end{lemma}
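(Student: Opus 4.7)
The plan is to show that the Cartan involution $\theta$ of $G$ restricts to an involution of $(G_Z)_0$ whose fixed-point subgroup has identity component $(K_Z)_0$; the required Lie algebra decomposition and the rank claim will fall out of the induced $\pm 1$ eigenspace decomposition. The crucial algebraic input is the identity $\theta(Z) = -Z$, which holds because $Z \in \mathfrak{a} \subseteq \mathfrak{p}$.

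First I would verify that $\theta$ stabilizes $\mathfrak{g}_Z$ and $G_Z$. At the Lie algebra level, if $X \in \mathfrak{g}_Z$ then $\theta[X,Z] = [\theta X,\theta Z] = -[\theta X, Z]$, so $[X,Z]=0$ forces $[\theta X, Z] = 0$. At the group level, the relation $\theta \circ Ad(g) = Ad(\theta g) \circ \theta$ together with $\theta Z = -Z$ shows $Ad(g)Z = Z \iff Ad(\theta g)Z = Z$, so $\theta(G_Z) = G_Z$ and hence $\theta((G_Z)_0) = (G_Z)_0$. The $+1$ and $-1$ eigenspaces of $\theta|_{\mathfrak{g}_Z}$ are then exactly $\mathfrak{g}_Z \cap \mathfrak{k} = \mathfrak{k}_Z$ and $\mathfrak{g}_Z \cap \mathfrak{p} = \mathfrak{p}_Z$, and because $\theta^2 = \mathrm{Id}$ the decomposition $\mathfrak{g}_Z = \mathfrak{k}_Z \oplus \mathfrak{p}_Z$ follows immediately. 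The bracket relations $[\mathfrak{k}_Z,\mathfrak{k}_Z]\subseteq \mathfrak{k}_Z$, $[\mathfrak{k}_Z,\mathfrak{p}_Z]\subseteq \mathfrak{p}_Z$, $[\mathfrak{p}_Z,\mathfrak{p}_Z]\subseteq \mathfrak{k}_Z$ are inherited by intersection with $\mathfrak{g}_Z$ from the corresponding relations in $\mathfrak{g}$, so $(\theta,\mathfrak{g}_Z)$ is a Cartan-type involutive Lie algebra and $(G_Z)_0/(K_Z)_0$ inherits the symmetric space structure; compactness of $(K_Z)_0$ is inherited from $K$.

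For the rank, I note that $Z \in \mathfrak{a}$ and $\mathfrak{a}$ abelian imply $\mathfrak{a} \subseteq \mathfrak{p}_Z$, so $\mathfrak{a}$ is an abelian subspace of $\mathfrak{p}_Z$ of dimension $\dim \mathfrak{a}$. Conversely, any abelian subspace of $\mathfrak{p}_Z$ is a fortiori an abelian subspace of $\mathfrak{p}$, hence has dimension at most $\dim \mathfrak{a}$. Thus $\mathfrak{a}$ is maximal abelian in $\mathfrak{p}_Z$ and the rank of $(G_Z)_0/(K_Z)_0$ is exactly $\dim \mathfrak{a}$.

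No step here is genuinely hard; essentially everything reduces to the single observation that $\theta Z = -Z$ because $Z \in \mathfrak{p}$. The one pedantic point worth flagging is matching $(K_Z)_0$ with the identity component of the fixed-point subgroup of $\theta|_{(G_Z)_0}$, but this is automatic since both have Lie algebra $\mathfrak{k}_Z$ and both are connected.
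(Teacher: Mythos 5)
Your proposal is correct and follows essentially the same route as the paper: the key point in both is that $\theta Z=-Z$ (since $Z\in\mathfrak{a}\subseteq\mathfrak{p}$) forces $\theta$ to stabilize $G_{Z}$ and $\mathfrak{g}_{Z}$, giving the eigenspace decomposition $\mathfrak{g}_{Z}=\mathfrak{k}_{Z}\oplus\mathfrak{p}_{Z}$, while $\mathfrak{a}\subseteq\mathfrak{p}_{Z}\subseteq\mathfrak{p}$ remains maximal abelian, so the rank is $\dim\mathfrak{a}$. Your use of the relation $\theta\circ Ad(g)=Ad(\theta g)\circ\theta$ is if anything slightly cleaner than the paper's computation with $h=\exp H$, but it is the same argument.
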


\begin{proof}
Let $h\in \left( G_{Z}\right) _{0}$ and pick $H\in \left( \mathfrak{g}%
_{Z}\right) _{0}$ such that $h=\exp H$. Standard facts imply 
\begin{eqnarray*}
Ad(\theta h)Z &=&Ad(\theta \exp H)Z=Ad(\exp (d\theta )_{e}H)Z \\
&=&\exp (ad((d\theta )_{e}H)Z=Z
\end{eqnarray*}%
since $[(d\theta )_{e}H,Z]=0$. Thus $\theta h\in \left( G_{Z}\right) _{0}$.

Since $Z\in \mathfrak{a}$, $[X,Z]=0$ for all $X\in \mathfrak{a}$. Thus $%
\mathfrak{a\subseteq p}_{Z}$ and hence must be a maximal abelian subalgebra.
\end{proof}

\begin{lemma}
The rank of $Df_{Z}$ at $(Z^{\prime },k_{1},...,k_{m})\in f_{Z}^{-1}(0)$ is
at least%
\begin{equation*}
\sum_{j=1}^{m}\min_{\sigma \in W}\dim \left( \mathcal{N}_{Z}\bigcap 
\mathcal{N}_{\sigma (Z_{j})}\right) .
\end{equation*}
\end{lemma}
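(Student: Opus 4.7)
The plan is to exploit the block structure of $Df_Z$: partial differentiation in the $k_j$ direction alone affects only the $j$-th factor of $\mathfrak{k}^m$, so the rank of $Df_Z$ at any point of $f_Z^{-1}(0)$ is bounded below by the sum over $j$ of the ranks of these partials. It therefore suffices to show, for each $j$ separately, that the $k_j$-partial has image of dimension at least $\min_{\sigma\in W}\dim(\mathcal{N}_Z\cap\mathcal{N}_{\sigma(Z_j)})$.

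Fix $(Z',k_1,\dots,k_m)\in f_Z^{-1}(0)$ and set $Y=Ad(k_j^{-1})Z'$, which lies in $n_{Z_j}$ by the definition of $f_Z^{-1}(0)$. Replacing $k_j$ by $k_j\exp(tH_j)$ with $H_j\in\mathfrak{k}$ and differentiating yields the directional derivative $[Z',Ad(k_j)[H_j,Z_j]]=Ad(k_j)[Y,[H_j,Z_j]]$ in the $j$-th slot. Since $[Y,Z_j]=0$, the Jacobi identity reduces this to $-Ad(k_j)\,ad(Z_j)[Y,H_j]$, so as $H_j$ varies over $\mathfrak{k}$ the image is $Ad(k_j)\,ad(Z_j)([Y,\mathfrak{k}])$. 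Because $\ker(ad(Z_j)|_{\mathfrak{p}})=n_{Z_j}$ and $[Y,\mathfrak{k}]\subseteq\mathfrak{p}$, this image has dimension $\dim[Y,\mathfrak{k}]-\dim([Y,\mathfrak{k}]\cap n_{Z_j})$. Writing $Y=Ad(h)Z$ with $h=k_j^{-1}k\in K$ (for any $k\in K$ with $Z'=Ad(k)Z$), the conjugation identity $ad(Ad(h)Z)=Ad(h)\,ad(Z)\,Ad(h^{-1})$ together with $Ad(h^{-1})\mathfrak{k}=\mathfrak{k}$ identifies $[Y,\mathfrak{k}]$ with $Ad(h)\mathcal{N}_Z$, of dimension $\dim\mathcal{N}_Z$.

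The final step uses the direct sum decomposition $\mathfrak{p}=\mathcal{N}_{Z_j}\oplus n_{Z_j}$ (immediate from the explicit spanning sets in terms of the $X_\alpha^-$ and $\mathfrak{a}$) to deduce $\dim V-\dim(V\cap n_{Z_j})\geq\dim(V\cap\mathcal{N}_{Z_j})$ for every subspace $V\subseteq\mathfrak{p}$; applied to $V=Ad(h)\mathcal{N}_Z$ this bounds the $j$-th partial's rank below by $\dim(Ad(h)\mathcal{N}_Z\cap\mathcal{N}_{Z_j})$. To convert $Ad(h)$ into a Weyl conjugate, I would observe that $Y$ and $Z_j$ commute in $\mathfrak{p}$, so they lie in a common maximal abelian subspace $\mathfrak{a}'\subseteq\mathfrak{p}$; by $Ad(K)$-transitivity on such subspaces there is $k'\in K$ with $Ad(k')\mathfrak{a}'=\mathfrak{a}$, and then $Ad(k')Y=\tau(Z)$ and $Ad(k')Z_j=\sigma(Z_j)$ for some $\tau,\sigma\in W$. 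Applying $Ad(k')$, and then a representative in $K$ of $\tau^{-1}$, preserves intersection dimensions and rewrites $\dim(Ad(h)\mathcal{N}_Z\cap\mathcal{N}_{Z_j})$ as $\dim(\mathcal{N}_Z\cap\mathcal{N}_{\tau^{-1}\sigma(Z_j)})$, which is at least $\min_{\sigma\in W}\dim(\mathcal{N}_Z\cap\mathcal{N}_{\sigma(Z_j)})$. Summing over $j$ yields the claimed lower bound. I expect the main obstacle to lie in this last conversion: turning the arbitrary $h\in K$ constrained by $Ad(h)Z\in n_{Z_j}$ into a genuine Weyl element, which requires recognizing that the centralizer condition forces $Ad(h)Z$ and $Z_j$ into a common MASA that $Ad(K)$ then carries into $\mathfrak{a}$.
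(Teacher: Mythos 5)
Your argument is correct and follows the same strategy as the paper's proof: bound the rank of $Df_{Z}$ below by the sum of the ranks of the individual $k_{j}$-partials, compute each partial's image, and convert the resulting group elements into Weyl conjugates by observing that the commuting elements $Y$ and $Z_{j}$ lie in a common maximal abelian subspace of $\mathfrak{p}$ (the paper packages this last step via the auxiliary symmetric space $(G_{Z})_{0}/(K_{Z})_{0}$, which is the same fact). The only difference is cosmetic: the paper conjugates $Z^{\prime}$ back to $Z$ and $Ad(k_{j})Z_{j}$ to $\sigma(Z_{j})$ \emph{before} differentiating, so the image of the $j$-th partial is computed exactly as $[Z,\mathcal{N}_{\sigma(Z_{j})}]$ and your Jacobi-identity and projection-inequality steps are not needed, but both routes yield the same lower bound.
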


\begin{proof}
Fix $(Z^{\prime },k_{1},...,k_{m})\in f_{Z}^{-1}(0)$. Consider the $j$'th
inclusion map: $K\rightarrow O_{Z}\times K^{m}$ given by $k\longmapsto
(Z^{\prime },k_{1},...,k_{j-1},k,k_{j+1},...,k_{m})$ and let $%
f_{Z}^{(j)}:K\rightarrow \mathfrak{k}^{m}$ be the composition of this
inclusion with $f_{Z}$.

As the derivative of $f_{Z}^{(j)}$ lies in the $j$'th coordinate of $%
\mathfrak{k}^{m}$, 
\begin{equation*}
rankDf_{Z}\geq \sum_{j=1}^{m}rankDf_{Z}^{(j)}\text{.}
\end{equation*}

Suppressing the unused components of the domain of $f_{Z}^{(j)}$, we can
write $f_{Z}^{(j)}(k)=[Z^{\prime },Ad(k)Z_{j}]$. We will compute the rank of 
$f_{Z}^{(j)}$ at $k_{j}$.

We claim that there is a $k\in K$ such that $Ad(k)Z^{\prime }=Z$ and $%
Ad(kk_{j})Z_{j}=\sigma (Z_{j})$ for some $\sigma \in W$. To see this, choose 
$h_{1}\in K$ such that $Ad(h_{1})Z^{\prime }=Z$ and consider $%
[Z,Ad(h_{1}k_{j})Z_{j}].$ As $Ad(h_{1}k_{j})Z_{j}\in \mathfrak{p}_{Z}$,
there is some $h_{2}\in (K_{Z})_{0}$ such that $Ad(h_{2})Ad(h_{1}k_{j})Z_{j}%
\in \mathfrak{a}$. Set $k=h_{2}h_{1}$. By construction, $Ad(k)Z^{\prime }=Z$%
. Since $kk_{j}\in K$ and $Ad(kk_{j})Z_{j}\in \mathfrak{a}$, the claim now
follows from the fact that if two elements, $Q,Q^{\prime }$, of $\mathfrak{a}
$ are $Ad(K)$ related, then there is an element $\sigma \in W$ with $\sigma
(Q)=Q^{\prime }$.

Since the rank of $f_{Z}^{(j)}$ is $Ad$-invariant, we can assume that $%
Z^{\prime }=Z$ and $Ad(k_{j})Z_{j}=\sigma (Z_{j})$.

For $X\in \mathfrak{k}$, we have 
\begin{equation*}
(Df_{Z}^{(j)})_{k_{j}}(X)=[Z,Ad(k_{j})[X,Z_{j}]]=[Z,[Ad(k_{j})X,\sigma
(Z_{j})]].
\end{equation*}%
Thus%
\begin{equation*}
\text{Im}(Df_{Z}^{(j)})_{k_{j}}=[Z,\mathcal{N}_{\sigma (Z_{j})}]=sp\left\{
X_{\alpha }^{+}:X_{\alpha }\in \mathfrak{g}_{\alpha },\alpha \in \Phi
_{Z}^{c}\bigcap \Phi _{\sigma (Z_{j})}^{c}\right\}
\end{equation*}%
and this has the same dimension as $\mathcal{N}_{Z}\bigcap \mathcal{N}%
_{\sigma (Z_{j})}$.
\end{proof}

\begin{proof}
\lbrack of Theorem] The proof of the theorem can now be completed as in \cite%
{Wr}. The hypothesis of the theorem implies that 
\begin{equation*}
\sum_{j=1}^{m}\min_{\sigma \in W}\dim \left( \mathcal{N}_{Z}\bigcap 
\mathcal{N}_{\sigma (Z_{j})}\right) >\dim \mathcal{N}_{Z}
\end{equation*}%
for all maximally singular elements $Z$, thus the rank of $f_{Z}$ at any
element of $f_{Z}^{-1}(0)$ is greater than the dimension of $O_{Z}$.
Consequently, $f_{Z}^{-1}(0)$ has dimension less than $K^{m}$. If we let $%
\pi _{Z}:O_{Z}\times K^{m}\rightarrow K^{m}$ be the projection, then $\pi
_{Z}(f_{Z}^{-1}(0)),$ and thus also $\bigcup\limits_{Z\in S}\pi
_{Z}(f_{Z}^{-1}(0))$, has measure zero in $K^{m}$. If $(k_{1},...,k_{m})%
\notin \bigcup\limits_{Z\in S}\pi _{Z}(f_{Z}^{-1}(0))$, then $%
\bigcap\limits_{j=1}^{m}Ad(k_{j})n_{Z_{j}}=\{0\}$ and hence $%
O_{Z_{1}}+\cdot \cdot \cdot +O_{Z_{m}}$ has non-empty interior.
\end{proof}

\subsection{Completion of the proof of sufficiency for symmetric spaces with
all one-dimensional, restricted root spaces}

In this subsection we will apply the previous results to prove that
eligible, non-exceptional $L$-tuples in symmetric spaces with (all)
one-dimensional, restricted root spaces are absolutely continuous. We begin
with two lemmas that will allow us to establish the base cases.

We call $Z\in \mathfrak{a}$ \textit{regular} if $\Phi _{Z}$ is empty.

\begin{lemma}
\label{reg}If $Z$ is a regular element and $Y\in \mathfrak{a}$ is non-zero,
then $\mu _{Z}\ast \mu _{Y}$ is absolutely continuous.
\end{lemma}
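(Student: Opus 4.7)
The plan is to verify condition (3) of Proposition \ref{keyprop}. Since $Z$ is regular, $\Phi_Z = \emptyset$ and so $\mathcal{N}_Z = sp\{X_\alpha^- : \alpha \in \Phi\} = \mathfrak{p}\ominus\mathfrak{a}$. Taking $k_1 = \mathrm{Id}$, condition (3) then reduces to producing $k \in K$ such that $\pi_\mathfrak{a}(Ad(k)\mathcal{N}_Y) = \mathfrak{a}$, where $\pi_\mathfrak{a}:\mathfrak{p}\to\mathfrak{a}$ is the orthogonal projection relative to the Killing form.

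Because $Ad(k)\mathcal{N}_Y = T_{Ad(k)Y}(O_Y)$, this requirement says that the smooth map $\pi_\mathfrak{a}|_{O_Y}: O_Y \to \mathfrak{a}$ is a submersion at the point $Ad(k)Y$. I would argue existence of such a point by contradiction via Sard's theorem: if no submersion point existed, every point of $O_Y$ would be a critical point of $\pi_\mathfrak{a}|_{O_Y}$, whence the image $\pi_\mathfrak{a}(O_Y)$ would have Lebesgue measure zero in $\mathfrak{a}$.

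To derive a contradiction I would invoke Kostant's convexity theorem, which identifies $\pi_\mathfrak{a}(O_Y)$ with the convex hull of the Weyl orbit $W\cdot Y$. Since $G/K$ is irreducible the Weyl group acts irreducibly on $\mathfrak{a}$, so the only $W$-fixed vector in $\mathfrak{a}$ is $0$; thus the centroid $\tfrac{1}{|W|}\sum_{w\in W} wY = 0$ lies in $\mathrm{conv}(W\cdot Y)$, while the linear span of $\{wY: w\in W\}$ equals $\mathfrak{a}$ as $Y\neq 0$. Consequently $\mathrm{conv}(W\cdot Y)$ has non-empty interior in $\mathfrak{a}$, contradicting the measure-zero conclusion.

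The main obstacle is the reliance on Kostant's convexity. A self-contained alternative would pick $k = \exp(sH)$ with $H = \sum_i X_{\alpha_i}^+$ a sum over roots $\alpha_i \in \Phi\setminus\Phi_Y$ chosen so that they span $\mathfrak{a}^*$. Using the identity $0 \neq [X_\alpha^+, X_\alpha^-] \in \mathfrak{a}$ recorded in the excerpt together with the first-order expansion $Ad(\exp sH) = I + s\cdot ad(H) + O(s^2)$, one sees that the leading-order contribution to $\pi_\mathfrak{a}(Ad(\exp sH)\mathcal{N}_Y)$ is $s\cdot sp\{[X_{\alpha_i}^+, X_{\alpha_i}^-]\}$, which equals $s\mathfrak{a}$ by the spanning property; a standard lower-semicontinuity-of-rank argument then yields $\pi_\mathfrak{a}(Ad(\exp sH)\mathcal{N}_Y) = \mathfrak{a}$ for all small $s > 0$. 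This reduces the proposition to the combinatorial claim that in an irreducible restricted root system the roots non-vanishing on a nonzero element span $\mathfrak{a}^*$, which follows from irreducibility by ruling out containment of $\Phi$ in a union of two distinct hyperplanes.
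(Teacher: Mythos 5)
Your argument is correct, but it is genuinely different from the paper's, which does not prove this lemma at all: it simply remarks that the proof in \cite{GHgeneric} can be adapted, or refers to \cite{GS2003}. Both of your routes reduce, as they should, to showing that $\mathrm{Ad}(k)\mathcal{N}_Y$ projects onto $\mathfrak{a}$ for some $k\in K$, since regularity of $Z$ gives $\mathcal{N}_Z=\mathfrak{p}\ominus\mathfrak{a}$. The Sard--Kostant route is the more conceptual one: it buys a very short proof at the cost of importing Kostant's convexity theorem (and the irreducibility of the $W$-action on $\mathfrak{a}$), whereas your second argument is entirely in the spirit of the machinery the paper itself develops (compare Prop.~\ref{general strategy}: perturb by $\exp(sH)$ with $H\in\mathfrak{k}$, isolate the first-order bracket term, and use lower semicontinuity of rank), and is therefore the more natural one to splice into this paper; it is essentially the tangent-space argument of \cite{GHgeneric}. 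One step deserves a cleaner justification than the one you sketch: the claim that the roots of $\Phi\setminus\Phi_Y$ span $\mathfrak{a}^{\ast}$ is not really about $\Phi$ lying in a union of two hyperplanes (the span of $\Phi\setminus\Phi_Y$ could a priori be a subspace of any positive codimension). The quick correct argument is: if $\beta\in\Phi_Y$ is not orthogonal to some $\alpha\in\Phi\setminus\Phi_Y$, then $s_{\beta}(\alpha)=\alpha-\langle\alpha,\beta^{\vee}\rangle\beta$ is again a root not annihilating $Y$, so $\beta$ lies in $U:=sp(\Phi\setminus\Phi_Y)$; hence every root outside $U$ is orthogonal to $\Phi\setminus\Phi_Y$ and therefore to $U$, which would split $\Phi$ into two nonempty mutually orthogonal pieces, contradicting irreducibility. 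With that repair, and the observation that for distinct positive roots $\alpha_i\neq\alpha_j$ the bracket $[X_{\alpha_j}^{+},X_{\alpha_i}^{-}]$ has no $\mathfrak{a}$-component (so only the diagonal terms $[X_{\alpha_i}^{+},X_{\alpha_i}^{-}]$ survive the projection), your proof is complete.
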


\begin{proof}
The proof given in \cite{GHgeneric} can be easily adapted or see \cite%
{GS2003}.
\end{proof}

\begin{lemma}
Any eligible, non-exceptional pair, $(X,Y),$ in the symmetric space $%
G_{n}/K_{n}$ where $G_{n}=$ $SO_{0}(n,n)$, $K_{n}=SO(n)\times SO(n)$ for $%
n=3,4,5,$ is absolutely continuous.
\end{lemma}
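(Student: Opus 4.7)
The plan is to treat this statement as the base case for the induction in Theorem \ref{indstep}, exploiting that $SO_{0}(n,n)/SO(n)\times SO(n)$ has restricted root system $D_{n}$ with all restricted root spaces one-dimensional, so that $\dim \Phi _{0}=\mathrm{card}(\Phi _{0}^{+})$ for any subsystem. I would combine Lemma \ref{reg} with case-by-case application of Theorem \ref{WrCriteria}. First, if either $X$ or $Y$ is regular, Lemma \ref{reg} already gives absolute continuity; so from here on assume both $\Phi _{X}$ and $\Phi _{Y}$ are non-empty.

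Next I would enumerate, up to Weyl conjugacy, the possible types $D_{J}\times SU(s_{1})\times \cdots \times SU(s_{m})$ of non-regular elements in $\mathfrak{a}_{n}$ for $n=3,4,5$, remembering that in $D_{n}$ there are two Weyl-inequivalent conjugacy classes of $SU(n)$ type (the sign option when $J=0$). The eligibility constraint $S_{X}+S_{Y}\leq 2n$ and the list of exceptional pairs in the definition cut this down to a short finite list in each rank. For instance, in $n=3$ the excluded types are those where both entries of the pair lie in $\{SU(3),SU(2)\}$ with the appropriate combinations; in $n=4$ one further excludes the tuples tied to the special $D_{4}$ triality phenomena; in $n=5$ one excludes only $(SU(5),SU(5))$ and $(SU(5),SU(4))$.

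For each surviving pair I would verify Theorem \ref{WrCriteria} with $m=2$, namely
\begin{equation*}
|\Phi ^{+}|-|\Psi ^{+}|-1\geq \bigl(|\Phi _{X}^{+}|-\min_{\sigma \in W}|\Phi _{X}^{+}\cap \sigma (\Psi ^{+})|\bigr)+\bigl(|\Phi _{Y}^{+}|-\min_{\sigma \in W}|\Phi _{Y}^{+}\cap \sigma (\Psi ^{+})|\bigr),
\end{equation*}
for every co-rank one closed root subsystem $\Psi \subseteq D_{n}$. The relevant such $\Psi $ are of types $D_{n-1}$, $A_{n-1}$ (in its two $W$-inequivalent embeddings in $D_{n}$), and $D_{k}\times D_{n-k}$; this is a short list of combinatorial configurations to check in each of the three ranks. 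For each pair $(X,Y)$ from the enumeration and each $\Psi $, I would compute the right-hand side by choosing $\sigma \in W$ to \emph{maximize} the intersection $|\Phi _{X}^{+}\cap \sigma (\Psi ^{+})|$ (and similarly for $Y$), and compare to the left-hand side.

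The main obstacle will be the case analysis near the eligibility boundary and the exceptional pairs, where the inequality is tight. The point is that when $\Phi _{X}$ is of $SU(n)$-type and $\Psi $ is an $A_{n-1}$ subsystem Weyl conjugate to $\Phi _{X}$, one can achieve $\Phi _{X}^{+}\cap \sigma (\Psi ^{+})=\Phi _{X}^{+}$, so the right-hand side swells by exactly $|\Phi _{X}^{+}|$; this is precisely the geometric source of the exceptional tuples (2)--(4) in Definition (and in $D_{4}$ the triality requires one to track which $SU(4)$-orbit is Weyl conjugate to which). So the verification has two sides: (a) confirm that for the listed exceptional pairs the inequality fails for the corresponding $\Psi $, which serves as a sanity check; and (b) confirm that for every other eligible pair one finds enough slack on the left, after accounting for the worst $\Psi $. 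Because the ranks involved are $3,4,5$, both sides of the inequality are small explicit integers, and the check reduces to a finite tabulation rather than any new conceptual difficulty.
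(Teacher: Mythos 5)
Your overall plan (treat $n=3,4,5$ as a finite tabulation and verify Theorem \ref{WrCriteria} for every eligible, non-exceptional pair) is a different route from the paper, which uses (\ref{WrC}) only for a handful of pairs — $(D_2,SU(2))$ in $D_3$, $(SU(4),SU(2)\times SU(2))$ non-conjugate in $D_4$, $(SU(5),D_2\times SU(3))$ in $D_5$ — and lifts everything else from rank $n-1$ via the induction step, Theorem \ref{indstep}. A full tabulation may well succeed (in the cases I have checked the inequality holds, often with equality), but as written your proposal contains a genuine error in how the criterion is applied. The right-hand side of (\ref{WrC}) involves $\dim \Phi _{X}-\min_{\sigma \in W}\dim (\Phi _{X}\cap \sigma (\Psi ))$, so evaluating it requires the \emph{minimum} of the intersection over the Weyl orbit of $\Psi$, i.e.\ the worst relative position; you propose to compute it using the \emph{maximum}. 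The maximum only yields a lower bound on the right-hand side, which cannot establish the inequality — indeed, with your convention every pair, including the exceptional ones, would trivially ``pass'' (take $\sigma (\Psi )\supseteq \Phi _{X}$ so the term vanishes). Your explanation of the exceptional pairs is inverted for the same reason: when $\Phi _{X}$ is type $SU(n)$ and $\sigma (\Psi )=\Phi _{X}$ the corresponding summand is $0$, not $|\Phi _{X}^{+}|$; the criterion fails for, say, $(SU(n),SU(n))$ because there exist $\sigma$ making $\dim (\Phi _{X}\cap \sigma (\Psi ))$ \emph{small} (two $A_{n-1}$'s in $D_{n}$ can intersect in only $\binom{\lfloor n/2\rfloor }{2}+\binom{\lceil n/2\rceil }{2}$ positive roots). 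This distinction is exactly what separates, e.g., the exceptional pair $(SU(4),\{e_{1}-e_{2},e_{3}-e_{4}\})$ from the non-exceptional $(SU(4),\{e_{1}-e_{2},e_{3}+e_{4}\})$ in $D_{4}$: for a fixed Weyl class of $A_{3}$ the minimal intersections are $(2,0)$ in the first case (sum of deficits $6>5$) but $(2,1)$ or $(3,0)$ in the second (sum $5\leq 5$).

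A secondary issue is your list of admissible $\Psi$. The subsystems $D_{k}\times D_{n-k}$ have full rank $n$, not co-rank one, so they do not occur; on the other hand you omit $A_{1}\times D_{n-2}$, which is $v^{\perp }\cap \Phi$ for $v=e_{i}\pm e_{j}$ and is a legitimate co-rank one closed subsystem for $n=4,5$ (harmless in practice, since there is ample slack there, but it must be listed and checked). Finally, note that many of the verifications sit exactly on the boundary (equality in (\ref{WrC})), so the tabulation must be done with the correct minima and with careful attention to the two Weyl classes of $A_{n-1}$ and of $SU(n)$-type annihilators in $D_{n}$; one cannot mix the minimizing $\sigma$ for $\Phi _{X}$ from one class with the minimizing $\sigma$ for $\Phi _{Y}$ from the other, since $\Psi$ is fixed before the two minima are taken.
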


\begin{proof}
When $n=3$, the only eligible, non-exceptional pairs where neither $X$ nor $%
Y $ are regular are the pairs of type $(D_{2},SU(2))$ and $(SU(2),SU(2))$.
Since the annihilators of any element of type $SU(2)$ are contained in the
annihilators of an element of type $D_{2},$ it suffices to check that the
former pair is absolutely continuous. For this one can easily verify (\ref%
{WrC}).

The induction argument, Theorem \ref{indstep}, can then be called upon to
see that all eligible pairs $(X,Y)$ in $G_{n}/K_{n}$ with $n=4$ are
absolutely continuous, except those for which $X^{\prime }$ is type $SU(3)$
and $Y^{\prime }$ is either $SU(3))$ or $SU(2)$. These are the pairs $(X,Y)$
of types $(SU(4),SU(4))$, $(SU(4),SU(3))$, $(SU(4),D_{2}\times SU(2))$, or $%
(SU(4),SU(2)\times SU(2))$. Notice that all these are exceptional pairs,
except for the pairing $X$ of type $SU(4)$ and $Y$ of type $SU(2)\times
SU(2))$ with $\Phi _{Y}$ not Weyl conjugate to a subset of $\Phi _{X}$. For
the last pair we can easily check (\ref{WrC}) is satisfied$.$

The arguments are similar for $n=5$; it suffices to check the Wright
criterion for a pair of type $(SU(5),D_{2}\times SU(3))$ and this can be
done as in \cite[Lemma 6]{GHAbs}.
\end{proof}

\begin{theorem}
\label{Propdim1}Suppose $G/K$ is a symmetric space whose restricted root
spaces are all of dimension one. If $(X,Y)$ is an eligible, non-exceptional
pair of non-zero elements in $\mathfrak{a},$ then $(X,Y)$ is an absolutely
continuous pair.
\end{theorem}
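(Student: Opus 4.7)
The plan is to proceed by induction on the rank $n$ of the symmetric space. The inductive engine is Theorem~\ref{indstep}: if $(X,Y)$ is eligible and avoids the two pair-types explicitly excluded there (both of type $SU(n)$ in $D_n$, or both of type $SU(m)\times SU(m)$ in $A_{2m-1}$), then Lemma~\ref{eligible} tells us that the reduced pair $(X',Y')$ is eligible in $\mathfrak{a}_{n-1}$; by induction this reduced pair is absolutely continuous on $\mathfrak{p}_{n-1}$, and Theorem~\ref{indstep} lifts the conclusion back to $\mathfrak{p}_n$. Since the four families of symmetric spaces under consideration here (Cartan types $AI$, $BI$ with $q=p+1$, $CI$, and $DI$ with $q=p$) have restricted root systems $A_n, B_n, C_n, D_n$ respectively, and the induction respects the Lie type of the restricted root system, the scheme is internally consistent.

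For the base cases, I would handle $n=1$ using Lemma~\ref{reg}, since the only non-zero elements of $\mathfrak{a}_1$ are regular. The case of $D_n$ for $n=3,4,5$ has been handled in the immediately preceding lemma, and the analogous low-rank cases for $A_n, B_n, C_n$ would be verified directly: either by applying Lemma~\ref{reg} whenever one of $X,Y$ is regular (which covers the bulk of eligible low-rank pairs), or by verifying the combinatorial inequality (\ref{WrC}) of Theorem~\ref{WrCriteria} for the remaining handful of non-regular pairs. For the latter one computes $\dim \Phi$, $\dim \Phi_X$, $\dim \Phi_Y$ and $\min_\sigma \dim(\Phi_X \cap \sigma(\Psi))$ etc.\ for each co-rank one subsystem $\Psi$ satisfying $\mathrm{sp}(\Psi)\cap \Phi = \Psi$, which in low rank reduces to a short finite list of subsystems.

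The main obstacle will be the boundary cases where the inductive scheme stalls, namely eligible non-exceptional pairs $(X,Y)$ whose reduced pair $(X',Y')$ is either zero or lands in one of the exceptional configurations of the $(n-1)$-rank problem, making the induction hypothesis vacuous. Typical offenders are pairs in $D_n$ with one element of type $SU(n)$ and the other of some admissible but ``large'' type, or pairs in $A_{2m-1}$ whose reduction approaches the $SU(m)\times SU(m)$ pairing. For these I would bypass the induction altogether and verify Theorem~\ref{WrCriteria} directly, exploiting the fact that in the one-dimensional-multiplicity setting $\dim \Phi_0 = \#\Phi_0^+$, so the inequality (\ref{WrC}) becomes a purely combinatorial count on positive roots. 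The analysis mirrors the compact Lie algebra argument of \cite[Lemma~6]{GHAbs}, adapted to the $A, B, C, D$ restricted root systems appearing here; in each case the minimization $\min_\sigma \dim(\Phi_X \cap \sigma(\Psi))$ is achieved by a Weyl element that aligns the support of $\Psi$ optimally with the block structure of $\Phi_X$, and the eligibility bound $S_X + S_Y \le 2n$ (or $\le n+1$ in type $A_n$) is precisely what is needed to close the inequality.
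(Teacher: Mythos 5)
Your proposal is correct and is essentially the paper's argument: induction on rank driven by Theorem~\ref{indstep} and Lemma~\ref{eligible}, with base cases settled by Lemma~\ref{reg} and the criterion (\ref{WrC}), and with the problematic pairs whose reduction becomes exceptional handled by direct verification of (\ref{WrC}). The only refinement in the paper is that it pins down exactly where the induction can stall --- it shows that for types $A_n$, $B_n$, $C_n$ and for $D_n$ with $n\geq 6$ an eligible, non-exceptional pair always reduces to a non-exceptional pair, so the Wright-criterion checks are confined to the $D_n$ base cases $n=3,4,5$.
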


\begin{proof}
This will be an induction argument based on the rank of the symmetric space.
The previous lemma establishes the result for the symmetric spaces with $%
G_{n}=SO_{0}(n,n)$ and $K_{n}=SO(n)\times SO(n)$ (restricted root space type 
$D_{n})$ for $n=3,4,5$. Hence for this family of symmetric spaces we can
start the induction argument with rank $n=6$, taking $n=5$ as the base case.
All the other symmetric spaces under consideration have restricted root
spaces of types $A_{n}$, $B_{n}$ or $C_{n}$. For all of these we can take $%
n=1$ as the base case and there the result trivially holds by Lemma \ref{reg}
since any non-zero element is regular.

We now assume inductively that the result holds for symmetric spaces of rank 
$n-1$ and proceed to consider the problem for rank $n$. Assume $(X,Y)$ is an
eligible, non-exceptional pair of non-zero elements in $\mathfrak{a}_{n}$.
From Lemma \ref{eligible} we know the reduced pair, $(X^{\prime },Y^{\prime
})$, is also eligible. If the restricted root system is type $B_{n}$ or $%
C_{n}$, then clearly $(X^{\prime },Y^{\prime })$ is not exceptional. If the
restricted root system is type $D_{n}$ (with $n\geq 6$) then $(X^{\prime
},Y^{\prime })$ can only be exceptional if $X^{\prime }$ is type $SU(n-1)$
and $Y^{\prime }$ is either that type or type $SU(n-2)$. But this happens
only if $X$ is of type $SU(n)$ and $Y$ is either type $SU(n)$ or $SU(n-1)$
which is not true as the pair $(X,Y)$ is not exceptional. Lastly, we remark
that in the case of type $A_{n}$, we can be sure the pair $(X^{\prime
},Y^{\prime })$ is not exceptional because if $X^{\prime },Y^{\prime }$ were
both of type $SU(n/2)\times SU(n/2)$ in the symmetric space with restricted
root system of type $A_{n-1}$, then $X,Y$ would both be type $%
SU(n/2+1)\times SU(n/2)$ and that's not an eligible pair in the original
symmetric space.

The induction hypothesis thus implies that $(X^{\prime },Y^{\prime })$ is an
absolutely continuous pair. Appealing to Theorem \ref{indstep}, we conclude
that the same is true for $(X,Y)$.
\end{proof}

We now turn to the problem of $L\geq 3$ where the results are new for all
types. Type $D_{n}$ is the most complicated because of the exceptional
cases. (The exceptional pairs present difficulties even for dealing with $%
L\geq 3$.) These problems were also addressed for the Lie algebra case in 
\cite{GHAbs} (see especially Lemmas 6, 7 in that paper), but in \cite{GHAbs}
some of the arguments relied upon $L^{2}$ density results for convolutions
of orbital measures and such results are generally unknown in the symmetric
space setting.

We begin the argument with several technical lemmas which will enable us to
address these complications.

\begin{lemma}
\label{base}Consider the symmetric space of Cartan type $DI$, $%
SO_{0}(n,n)/SO(n)\times SO(n)$ with $n\geq 3$. Let $X,Y\in \mathfrak{a}_{n}$
be of dominant $SU$ type and $Z\in \mathfrak{a}_{n}$ be non-zero.

\begin{enumerate}
\item When $n=4$, the triple $(X,Y,Z)$ is absolutely continuous if $X,Y,Z$
are all type $SU(4),$ but their annihilating root systems are not Weyl
conjugates.

\item Suppose $n\geq 4$. If $(X^{\prime },Y^{\prime },Z^{\prime })$ is an
absolutely continuous triple, then $(X,Y,Z)$ is also absolutely continuous.

\item If $n\geq 4$ and $Z$ is also of dominant $SU$ type, then $(X,Y,Z)$ is
an absolutely continuous triple, except if $n=4$ and all three of $X,Y,Z$
are type $SU(4)$ with Weyl conjugate sets of annihilating roots.

\item If $n\geq 4$ and $Z$ is not of dominant $SU$ type, then $(X,Y,Z)$ is
absolutely continuous.
\end{enumerate}
\end{lemma}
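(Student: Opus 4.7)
My plan is to prove the four parts in sequence, using part (1) as the base case for $n=4$, part (2) as the inductive step, and then deducing parts (3) and (4) from these together with Theorem \ref{Propdim1}.

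For part (1), I would apply the Wright-type sufficient condition, Theorem \ref{WrCriteria}. In $D_4$ all restricted root spaces have dimension one, so $\dim\Phi=12$ and each type-$SU(4)$ element has $\dim\Phi_{Z_i}=6$. Up to Weyl conjugacy there are only finitely many co-rank one root subsystems $\Psi\subseteq D_4$ with $sp(\Psi)\cap\Phi=\Psi$ (types $A_3$, $D_3$, $A_1\times A_1\times A_1$, and their relatives), and for each I would compute both $\dim\Phi-\dim\Psi$ and the three numbers $\dim\Phi_{Z_i}-\min_\sigma\dim(\Phi_{Z_i}\cap\sigma(\Psi))$. The hypothesis that the three $SU(4)$ annihilating systems are not all mutually Weyl conjugate is exactly what guarantees that for every such $\Psi$ at least one $\min_\sigma$ is strictly smaller than in the all-conjugate case, pushing the sum across the threshold in (\ref{WrC}).

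For part (2), I would apply the general strategy Proposition \ref{general strategy} with $L=3$. Mirroring Case 1(a) in the proof of Theorem \ref{indstep}, I take $\Omega=\mathcal{V}_n\ominus\mathcal{V}_{n-1}=sp\{E_{e_1\pm e_j}^-:2\le j\le n\}$, which is $ad(\mathfrak{k}_{n-1})$-invariant by rule (\ref{adrule}); set $\Omega_0=\{E_{e_1+e_n}^-\}\subseteq\Omega_Z$; and pick $M=E_{e_1+e_n}^+\in\mathfrak{k}_n$. Appropriate Weyl conjugates of $X,Y,Z$ together with the dominant $SU$ structure of $X$ and $Y$ and the non-zero hypothesis on $Z$ let me exhibit $k_1,k_2\in K_{n-1}$ satisfying hypothesis (i); hypotheses (ii) and (iii) follow from the co-dimension one property of $sp\{\Omega,\mathfrak{p}_{n-1}\}$ in $\mathfrak{p}_n$, exactly as in the induction theorem.

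Part (3) splits according to whether the pair $(X,Y)$ is itself absolutely continuous. If $(X,Y)$ is eligible and non-exceptional, Theorem \ref{Propdim1} gives that $\mu_X\ast\mu_Y$ is absolutely continuous, and convolving with the probability measure $\mu_Z$ preserves this. The only dominant-$SU$ exceptional pairs in $D_n$ are $(SU(n),SU(n))$, $(SU(n),SU(n-1))$, and the additional $n=4$ configurations; in each of these I would reduce via part (2), observing that $S_{X'}=S_X-1$ and $S_{Y'}=S_Y-1$ under the reduction, so that after finitely many descent steps the primed pair is no longer exceptional and Theorem \ref{Propdim1} applies, or the descent reaches the $n=4$ base case handled by part (1). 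The main obstacle here, which I expect to require the most careful bookkeeping, is tracking the Weyl-conjugacy condition on the three $SU(n)$ annihilators through successive reductions, so that we only land in the excluded all-conjugate $SU(4)$ triple when the original triple was itself of that form. For part (4), where $Z$ is dominant $D$ type (so $\Phi_Z$ contains a $D_J$-subsystem with $2J\ge\max_j s_j(Z)$), I would again induct on $n$ via part (2) after verifying absolute continuity of $(X',Y',Z')$ in $\mathfrak{p}_{n-1}$. When $n$ is sufficiently large, at least one of the pairs $(X,Z)$, $(Y,Z)$, $(X,Y)$ is eligible and non-exceptional, giving an absolutely continuous pair via Theorem \ref{Propdim1} whose convolution with the third measure is therefore absolutely continuous. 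Small-rank edge cases, where all three pairs fail, descend via part (2) to $n=4$, where a direct application of Theorem \ref{WrCriteria} in the spirit of part (1) completes the argument.
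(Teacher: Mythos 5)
Your overall architecture --- Theorem \ref{WrCriteria} for the $D_4$ base case, Proposition \ref{general strategy} for the inductive step (2), and descent for (3) and (4) --- is the same as the paper's, but several individual steps are wrong or unresolved. The most serious is in part (1): your explanation of why non-conjugacy helps runs in the wrong direction. The criterion (\ref{WrC}) asks that the right-hand side be \emph{small}, i.e.\ that each $\min_{\sigma}\dim(\Phi_{Z_i}\cap\sigma(\Psi))$ be \emph{large}; making a minimum ``strictly smaller'' enlarges the right-hand side and makes the inequality harder, not easier, to satisfy. What actually rescues the non-conjugate triple is that two non-Weyl-conjugate $SU(4)$ subsystems of $D_4$ are forced to intersect in \emph{more} roots than the worst-case pair of conjugate ones, so the corresponding term $\dim\Phi_{Z_i}-\min_{\sigma}\dim(\Phi_{Z_i}\cap\sigma(\Psi))$ decreases and the sum just meets the bound $2(\dim\Phi-\dim\Psi)-1$ for $\Psi$ of type $SU(4)$; with all three conjugate the sum exceeds it, consistent with that triple being exceptional. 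As written, your heuristic would ``prove'' the opposite of the lemma.

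There are further gaps. In part (2), the uniform choice $\Omega_0=\{E_{e_1+e_n}^-\}\subseteq\Omega_Z$ fails when $Z$ is of type $SU(n)$ with $e_1+e_n\in\Phi_Z$ (and no Weyl conjugate repairs this, since the two $SU(n)$ classes in $D_n$ are not conjugate); a case split with a different $\Omega_0$ and $M$ (e.g.\ $\Omega_0=\{E_\beta^-\}$, $M=E_\beta^+$ for $\beta$ one of $e_1\pm e_2$) is needed. In part (4), the claim that for large $n$ some pair among $(X,Y),(X,Z),(Y,Z)$ is absolutely continuous is false: for $X,Y$ of type $SU(n)$ and $Z$ of type $D_{n-1}$ one has $S_X+S_Z=3n-2>2n$, so both mixed pairs are ineligible while $(X,Y)$ is exceptional, for \emph{every} $n$; hence the induction must be run all the way down to $n=4$, where triples such as $(SU(4),SU(4),D_3)$ and $(SU(4),SU(4),D_2\times SU(2))$ must be verified directly by (\ref{WrC}) --- they cannot be obtained from an absolutely continuous pair. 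Finally, in part (3) the ``bookkeeping'' you defer is the actual content of the proof: reducing $(SU(n),SU(n),SU(n))$ reproduces the same configuration at every rank, so one must show (as is done by applying even sign changes to the original elements before reducing) that the three annihilating systems can be arranged to land in distinct Weyl classes at rank $4$; and the $n=4$ triple $(SU(4),SU(4),SU(2)\times SU(2))$ again requires a direct check of (\ref{WrC}) rather than descent, since part (2) only descends to rank $3$ where no triple results are yet available.
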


\begin{proof}
(1) We use the criterion of (\ref{WrC}) to prove this. The root subsystems
of rank 3 in $D_{4}$ are of type $D_{2}\times SU(2)$, $D_{3}$ and $SU(4)$.
The key points to observe are:

(i) the intersection of any positive root systems of type $SU(4)$ with one
of type $D_{J}\times SU(4-J)$ has cardinality at least $1$ if $J=2$ and at
least $3$ if $J=3$;

(ii) the intersection of any two Weyl conjugate positive root systems of
type $SU(4)$ has cardinality at least four; the intersection of any two
non-Weyl conjugate positive root systems of type $SU(4)$ has cardinality at
least six.

(2) We will use the notation of Theorem \ref{indstep}. Let $\Omega
=\{E_{\alpha }^{-}:\alpha =e_{1}\pm e_{j},2\leq j\leq n\}$. Without loss of
generality we can assume $\Omega _{X}\supseteq \{E_{\alpha }^{-}:\alpha
=e_{1}+e_{j},2\leq j\leq n\}$ and $\Omega _{Y}$ either contains the same set
again or $\Omega _{Y}\supseteq \{E_{\alpha }^{-},E_{e_{1}-e_{n}}^{-}:\alpha
=e_{1}+e_{j},2\leq j\leq n-1\}$. Let $k\in K_{n}$ be a Weyl conjugate that
changes the signs of $2,...,n-1$\ (and $n$ if necessary, to be an even sign
change). Then $\Omega _{X}\bigcup \Omega _{Y}$ contains all of $\Omega $,
except possibly $E_{e_{1}-e_{n}}^{-}$.

If $Z$ is not type $SU(n)$, applying a Weyl conjugate, if needed, we can
assume $\Omega _{Z}\supseteq \{E_{\alpha }^{-}:\alpha =e_{1}\pm e_{n}\}$. We
take $\Omega _{0}=\{E_{e_{1}+e_{n}}^{-}\}$ and $M=E_{e_{1}+e_{n}}^{+}$. By
assumption, $(X^{\prime },Y^{\prime },Z^{\prime })$ is absolutely
continuous, hence we can appeal to the general strategy, Prop. \ref{general
strategy} to see that $(X,Y,Z)$ is absolutely continuous.

If $Z$ is of type $SU(n)$, after applying a Weyl conjugate we can assume $%
\Omega _{Z}$ contains $E_{e_{1}-e_{n}}^{-}$ and $E_{\beta }^{-}$ for $\beta $
one of $e_{1}\pm e_{2}$. Take $\Omega _{0}=\{E_{\beta }^{-}\}$ and $%
M=E_{\beta }^{+}$, and appeal to the same proposition again.

(3)\ It is useful to note that if $\mu _{X}\ast \mu _{Y}$ is absolutely
continuous, then so is $\mu _{X}\ast \mu _{Y}\ast \mu _{Z}$ for any $Z$.

Assume, first, that $n\geq 5$. Since all pairs of dominant $SU$ type are
absolutely continuous except the pairs $(SU(n),SU(n))$ and $(SU(n),SU(n-1))$%
, and the annihilating root system of any element of type $SU(n-1)$ is
contained in one of type $SU(n)$, it suffices to check that the triple $%
(SU(n),SU(n),SU(n))$ is absolutely continuous. To do this, we will verify
the result holds for $n=5$ and then appeal to the induction argument
established in the previous part of this lemma.

The most efficient way to prove a triple $(X,Y,Z)$, each of type $SU(5)$
with $n=5$, is absolutely continuous is to establish that the reduced triple 
$(X^{\prime },Y^{\prime },Z^{\prime })$ is absolutely continuous and again
appeal to the previous part of the lemma. The reduced elements are each of
type $SU(4)$, but by applying an even number of sign changes as needed, to
the original triple, we can assume the three $SU(4)$ annihilating root
subsystems are not Weyl conjugate. Thus part (1) of the lemma establishes
the absolute continuity of $(X^{\prime },Y^{\prime },Z^{\prime })$.

When $n=4$, the pairs $(X,Y)$ that are each of dominant $SU$ type that are
not absolutely continuous are those where (without loss of generality) $X$
is type $SU(4)$ and $Y$ is type $SU(4),SU(3)$ or $SU(2)\times SU(2)$ where,
in the latter case, $\Phi _{Y}\subseteq \Phi _{X}$. Thus it will be enough
to check that the triples $(SU(4),SU(4),SU(3))$ and $(SU(4),SU(4),SU(2)%
\times SU(2))$ are absolutely continuous. The first follows from part (1)
since any root system of type $SU(3)$ can be viewed as a subset of either of
the two Weyl conjugacy classes of root systems of type $SU(4)$. \ The second
can be deduced from (\ref{WrC}).

(4) First assume $n=4$. The triples with precisely two terms that are
dominant $SU$ type, that cannot be seen to be absolutely continuous by
arguing that some pair in the triple is absolutely continuous, are the types 
$(SU(4),SU(4),D_{3})$, $(SU(4),SU(3),D_{3})$ and $(SU(4),SU(4),D_{2}\times
SU(2))$. Actually, it is enough to check the first and last triple in the
list, as the absolute continuity of the second follows from the first. These
we check by (\ref{WrC}).

Together with part (3), we have now shown that all triples in $D_{4},$ where
at least two elements are dominant $SU$ type, are absolutely continuous,
other than in the exceptional case where all three elements are type $SU(4)$
with Weyl conjugate sets of annihilating root systems.

Now assume $n\geq 5$. There is no loss in assuming $X$ and $Y$ are type $%
SU(n)$ and $Z$ is not dominant $SU$ type. We proceed by induction using the
comment of the previous paragraph to start the base case, $n=4$, noting that 
$X^{\prime }$ and $Y^{\prime }$, but not $Z^{\prime }$, are type $SU(n-1)$.
If $Z^{\prime }$ is not dominant $SU$ type, then by the induction hypothesis
the triple $(X^{\prime },Y^{\prime },Z^{\prime })$ is absolutely continuous
and we appeal to part (2) to conclude that $(X,Y,Z)$ is absolutely
continuous. If $Z^{\prime }$ is dominant $SU$ type, we appeal to (3) to see
that the reduced triple is absolutely continuous and then call again upon
(2).
\end{proof}

Here is a useful and immediate corollary.

\begin{corollary}
\label{2dom}Suppose $n\geq 5$, non-zero $X_{j}\in \mathfrak{a}_{n}$ for $%
j=1,...,L\geq 3$ and at least two $X_{j}$ are dominant $SU$ type. Then $%
(X_{1},...,X_{L})$ is an absolutely continuous tuple.
\end{corollary}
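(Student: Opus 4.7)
The plan is to reduce the absolute continuity of an $L$-tuple, with $L \ge 3$, to the triple case already settled by Lemma \ref{base}, using only the elementary fact that the convolution of an absolutely continuous measure with any Borel probability measure on $\mathfrak{p}$ is again absolutely continuous (this is immediate by Fubini applied to the Radon--Nikodym density against Lebesgue measure).

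After relabeling, I may assume $X_1$ and $X_2$ are the two elements of dominant $SU$ type, and I isolate the triple $(X_1, X_2, X_3)$. The hypothesis $n \ge 5$ is crucial here: it rules out the sole exceptional situation in Lemma \ref{base}(3), namely three elements of type $SU(4)$ in $D_4$ whose annihilating root systems are Weyl conjugate. Consequently, if $X_3$ is also of dominant $SU$ type, Lemma \ref{base}(3) gives absolute continuity of $\mu_{X_1} \ast \mu_{X_2} \ast \mu_{X_3}$, while if $X_3$ is not of dominant $SU$ type, Lemma \ref{base}(4) does the same.

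To finish, I write
\begin{equation*}
\mu_{X_1} \ast \cdots \ast \mu_{X_L} = \bigl(\mu_{X_1} \ast \mu_{X_2} \ast \mu_{X_3}\bigr) \ast \mu_{X_4} \ast \cdots \ast \mu_{X_L},
\end{equation*}
and invoke the preservation of absolute continuity under further convolution with the probability measures $\mu_{X_4}, \ldots, \mu_{X_L}$. By Corollary \ref{equiv} (equivalently, by Proposition \ref{keyprop}), this yields that $(X_1, \ldots, X_L)$ is an absolutely continuous tuple.

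There is no genuine obstacle in this argument: the entire substance has been absorbed into Lemma \ref{base}, whose proof is where the real combinatorial work of handling the dominant $SU$ type elements was done. The role of $n \ge 5$ is purely to avoid the $D_4$ exceptional triple; in particular, no induction on $L$ is needed because convolution with an absolutely continuous measure trivially propagates the property.
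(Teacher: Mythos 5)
Your proposal is correct and is exactly the argument the paper intends: the corollary is stated as "immediate" from Lemma \ref{base}, and indeed parts (3) and (4) of that lemma (with $n\geq 5$ excluding the $D_4$ exceptional triple) settle the triple $(X_1,X_2,X_3)$, after which absolute continuity propagates under further convolution --- a fact the paper itself records at the start of the proof of Lemma \ref{base}(3).
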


\begin{lemma}
\label{domswitch}Consider the symmetric space $SO_{0}(n,n)/SO(n)\times SO(n)$
with $n\geq 5$.

\begin{enumerate}
\item Suppose $X$ is of dominant $SU$ type, $Y$ is of dominant $D$ type and $%
Y^{\prime }$ is of dominant $SU$ type. Then the pair $(X,Y)$ is absolutely
continuous.

\item Suppose $X,Y$ are both of dominant $D$ type, but $X^{\prime }$, $%
Y^{\prime }$ are both of dominant $SU$ type. Then the pair $(X,Y)$ is
absolutely continuous.
\end{enumerate}
\end{lemma}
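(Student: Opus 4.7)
The plan is to apply Theorem \ref{indstep}: in both parts of the lemma, the hypotheses force the reduced pair $(X',Y')$ to consist of two dominant $SU$ elements in $\mathfrak{a}_{n-1}$. For part (1), $X'$ remains dominant $SU$ because reducing $s_1^X$ by one leaves $s_1^X - 1 \geq 2J_X$, and $Y'$ being dominant $SU$ is given. For part (2), both $X'$ and $Y'$ are dominant $SU$ by hypothesis. Theorem \ref{indstep} then applies: part (1) falls into its Case 3 (different dominant types, with the $D_n$ modifications indicated there), and part (2) into its Case 2, provided the reduced pair $(X',Y')$ is absolutely continuous in $\mathfrak{p}_{n-1}$.

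To establish $(X',Y')$ absolutely continuous, my first attempt would be Theorem \ref{Propdim1}, which needs $(X',Y')$ eligible and non-exceptional in $D_{n-1}$. Non-exceptionality is routine: being excluded would force $(X',Y')$ to have type $(SU(n-1),SU(n-1))$ or $(SU(n-1),SU(n-2))$, but these would force types on $(X,Y)$ in $D_n$ incompatible with the stated dominant-type hypotheses (using $n \geq 5$). Eligibility is the main combinatorial check: writing $S_{X'} = s_1^X - [s_1^X > s_2^X]$, and (for part (1)) $S_{Y'}=s_1^Y \in \{2J_Y-1, 2J_Y\}$ forced by $Y$ dominant $D$ but $Y'$ dominant $SU$ (analogously for part (2)), one verifies that $S_{X'}+S_{Y'}$ drops by at least two from $S_X+S_Y$ except in a narrow boundary situation.

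The main obstacle is the tight boundary configuration where $(X',Y')$ fails to be eligible in $D_{n-1}$ despite $(X,Y)$ being eligible in $D_n$. For part (1) this is precisely $S_X+S_Y=2n$ together with $s_1^X=s_2^X$ and $s_1^Y=2J_Y$; part (2) yields an analogous list. In these boundary cases the induction from $\mathfrak{p}_{n-1}$ is blocked, and I would instead invoke Wright's criterion (Theorem \ref{WrCriteria}) directly at rank $n$. For each co-rank one subsystem $\Psi \subseteq \Phi$ with $sp(\Psi)\cap\Phi=\Psi$ (up to Weyl conjugacy, types $D_{n-1}$, $A_{n-1}$, and $D_k \times A_{n-1-k}$), one estimates $\min_\sigma \dim(\Phi_Z \cap \sigma\Psi)$ exploiting the very rigid block structure imposed by the boundary conditions on $X$ (two top blocks of equal size) and on $Y$ (a single block $b_1^{2J_Y}$ of size exactly $2J_Y$ atop $J_Y$ zeros). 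The inequality $\dim\Phi - \dim\Psi - 1 \geq \sum_i(\dim\Phi_{Z_i} - \min_\sigma \dim(\Phi_{Z_i}\cap\sigma\Psi))$ then closes with slack thanks to $n \geq 5$, completing the proof in both parts.
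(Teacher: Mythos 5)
There is a genuine gap, and it sits exactly where the actual content of the lemma lies. The lemma carries \emph{no} eligibility hypothesis on $(X,Y)$: its whole point is that the stated dominant-type conditions \emph{force} $(X,Y)$ to be eligible and non-exceptional. Your argument needs $(X,Y)$ eligible in order to invoke Theorem \ref{indstep}, and you quietly treat this as given (``\ldots despite $(X,Y)$ being eligible in $D_n$''), but you never derive it. The derivation is the one short computation the proof requires: if $Y$ is dominant $D$ type of type $D_{J}\times SU(s_{1})\times\cdots$ while $Y'$ is dominant $SU$ type, then $2J\geq s_{1}\geq 2J-1$, so $J+s_{1}\leq n$ gives $J\leq (n+1)/3$ and hence $S_{Y}=2J\leq 2(n+1)/3$. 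Combined with $S_{X}\leq n$ (part (1)) or with the same bound for $X$ (part (2)), this yields $S_{X}+S_{Y}\leq (5n+2)/3<2n$ for $n\geq 2$, so the pair is eligible with room to spare; non-exceptionality is immediate since for $n\geq 5$ the exceptional pairs consist of dominant $SU$ elements only. This is the paper's entire proof, after which Theorem \ref{Propdim1} (already established for all eligible, non-exceptional pairs in these multiplicity-one spaces) applies directly at rank $n$.

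Two consequences for your write-up. First, the ``tight boundary configuration'' $S_{X}+S_{Y}=2n$ that drives you to Wright's criterion cannot occur under the hypotheses, precisely because of the bound above; the case analysis you build around it is vacuous, and the unverified claim that inequality (\ref{WrC}) ``closes with slack'' for those configurations is left entirely to the reader. Second, once you do supply the eligibility computation, the entire detour through Theorem \ref{indstep} and the reduced pair becomes redundant: Theorem \ref{Propdim1} already covers $(X,Y)$ itself. So the proposal is simultaneously missing the one step that matters and expending its effort on machinery that the missing step would render unnecessary.
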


\begin{proof}
Suppose an element $Z$ is type $D_{J}\times SU(s_{1})\times \cdot \cdot
\cdot \times SU(s_{m})$ and is of dominant $D$ type, but $Z^{\prime }$ is
dominant $SU$ type. Then $2J>s_{1}\geq 2J-1$ and consequently, $J\leq
(n+1)/3 $. Using this fact it is easy to check that under either of the two
hypotheses, the pair $(X,Y)$ is eligible and not exceptional.
\end{proof}

\begin{theorem}
\label{Thmdim1}Suppose $G/K$ is a symmetric space whose restricted root
spaces all have dimension one. Let $L\geq 3$. If $(X_{1},X_{2},...,X_{L})$
is an eligible, non-exceptional $L$-tuple of non-zero elements in $\mathfrak{%
a},$ then $(X_{1},X_{2},...,X_{L})$ is absolutely continuous.
\end{theorem}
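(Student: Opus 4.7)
The plan is to proceed by induction on the rank $n$, layered on top of the pair result Theorem \ref{Propdim1} and the auxiliary low-rank lemmas. The base case $n=1$ is immediate from Lemma \ref{reg}, since every non-zero element of $\mathfrak{a}_{1}$ is regular. For restricted root systems of type $D_{n}$ the induction must start at $n=5$, and the small-rank cases $n=3,4,5$ for $L=3$ are handled directly by Lemma \ref{base} together with the definition of exceptional triples.

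My first move would be to exploit a monotonicity principle: if any sub-pair $(X_{i},X_{j})$ inside the tuple is already an eligible, non-exceptional pair, then by Theorem \ref{Propdim1} the product $\nu_{x_{i}}\ast \nu_{x_{j}}$ is absolutely continuous, and since $G$ is unimodular, convolving this AC probability measure with the remaining $\nu_{x_{k}}$ preserves absolute continuity. I may therefore restrict attention to those eligible, non-exceptional $L$-tuples in which every sub-pair is non-AC (either non-eligible as a pair, or falling into one of the pair-exceptional cases from the definition). For $L\geq 4$ this is very restrictive, forcing each $S_{X_{i}}$ to be near the maximum; for $L=3$ the delicate case is triples of type $(SU(n),SU(n),\cdot)$ in $D_{n}$.

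The main engine is rank reduction: replace each $X_{i}$ by $X_{i}^{\prime}$ as in (\ref{X'}), verify that $(X_{1}^{\prime},\dots,X_{L}^{\prime})$ remains eligible by a computation parallel to Lemma \ref{eligible} (the extra slack $(L-1)2n-(L-1)2(n-1)=2(L-1)$ makes this routine for $L\geq 3$), verify non-exceptionality (automatic for $L\geq 4$; for $L=3$ a direct check in $D_{n}$ shows the exceptional triples can occur only when the original triple was itself exceptional in the sense of case (4) of the definition), apply the inductive hypothesis, and finally lift by the general strategy of Prop. \ref{general strategy} with the same $\Omega$, $\Omega_{0}$, $M$ constructions as in Theorem \ref{indstep}. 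The span condition (i) of Prop. \ref{general strategy} is only easier to verify with $L\geq 3$ summands available.

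The main obstacle will be the $L=3$ case in type $D_{n}$, where sub-pairs are frequently exceptional so the monotonicity reduction fails, and where the reduction $X\mapsto X^{\prime}$ can change the dominant type. I would treat this by splitting cases on how many of $X_{1},X_{2},X_{3}$ are dominant $SU$ versus dominant $D$: Corollary \ref{2dom} covers triples with at least two dominant $SU$ entries, Lemma \ref{domswitch} handles the transitions in dominant type that occur under reduction, and Lemma \ref{base} supplies the $D_{4}$ triples of type $SU(4)$ with non-Weyl-conjugate annihilators. A further technicality is that some $X_{j}^{\prime}$ may vanish after reduction (when $X_{j}$ has all its distinct nonzero entries in the top coordinate slot); in that event I would treat $X_{j}$ separately, letting it supply the $M$ and $\Omega_{0}$ data in Prop. \ref{general strategy}, while induction handles the remaining $L-1$ entries in rank $n-1$.
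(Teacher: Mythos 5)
Your overall architecture (induction on rank, monotonicity via Theorem \ref{Propdim1}, rank reduction plus the general strategy) matches the paper's, but there is a concrete gap in the base cases for type $D_{3}$. You assign the small ranks $n=3,4,5$ to Lemma \ref{base}, but that lemma only addresses triples whose first two entries are of dominant $SU$ type, and its substantive parts require $n\geq 4$. The $D_{3}$ tuples that actually need to be settled before the induction can start are $(SU(3),SU(3),D_{2})$, $(SU(3),D_{2},D_{2})$, $(D_{2},D_{2},D_{2})$ and the four-tuple with all entries of type $SU(3)$. For each of these, every sub-pair is non-absolutely-continuous (an element of type $D_{2}$ has $S_{X}=4$, so any pair containing one is non-eligible, and $(SU(3),SU(3))$ is an exceptional pair; for the four-tuple even every sub-triple is exceptional), so your monotonicity reduction gives nothing, and rank reduction bottoms out at $n=3$. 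The paper disposes of these by a direct verification of the Wright-type criterion (\ref{WrC}) of Theorem \ref{WrCriteria}, an ingredient your proposal never invokes; without it the induction has no floor in type $D_{n}$.

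A second, lesser issue is the lifting step for $L\geq 3$. You assert that condition (i) of Prop.~\ref{general strategy} is ``only easier to verify'' with more summands, but this is exactly where work is required: one must exhibit Weyl conjugates $k_{1},\dots ,k_{L-1}$ making the union of the $Ad(k_{i})(\Omega _{X_{i}})$ together with $\Omega _{X_{L}}\setminus \Omega _{0}$ span $sp\,\Omega$. The paper's device is to choose the $k_{i}$ so that $\bigcup_{i=1}^{L-1}Ad(k_{i})(\Omega _{X_{i}})$ contains $\Omega _{Y}$ for a single auxiliary element $Y$ of type $D_{m}$ with $m=n-(L-1)n+\sum J_{i}$, check that eligibility of the original tuple forces $(Y,X_{L})$ to be an eligible, non-exceptional pair, and then rerun the pair construction of Theorem \ref{indstep} on $(Y,X_{L})$. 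Your proposal would be complete if you supplied this (or an equivalent) collapsing argument and replaced the appeal to Lemma \ref{base} in rank $3$ by direct checks via Theorem \ref{WrCriteria}. Finally, the worry about $X_{j}^{\prime}$ vanishing is moot: for $n\geq 2$ the reduction of a non-zero element is non-zero, so no separate treatment is needed there.
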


\begin{proof}
We will first give the argument when the restricted root system is type $%
D_{n}$; the other cases are easier, as we indicate below. The key idea is
again an induction argument that is similar to one used in \cite{GHAbs} and
builds upon the $L=2$ result.

The base cases $D_{3}$ and $D_{4}$ will be discussed at the conclusion of
the proof, so assume that the result is true for $n=3,4$ and that now $n\geq
5$. We will let 
\begin{equation*}
\Omega =\{E_{\alpha }^{-}:\alpha =e_{1}\pm e_{j},2\leq j\leq n\}.
\end{equation*}

By appealing to Cor. \ref{2dom} and Lemma \ref{domswitch}, we can assume
that for at least $L-1$ indices $j$, (say all but $j=L$) both $X_{j}$ and $%
X_{j}^{\prime }$ are dominant $D$ type for otherwise there is some pair, $%
(X_{k},X_{\ell }),$ which is already absolutely continuous. For these
\thinspace $L-1$ indices we have $S_{X_{j}^{\prime }}=S_{X_{j}}-2$ and from
this it is easy to see that the reduced tuple, $(X_{1}^{\prime
},....,X_{L}^{\prime }),$ is eligible. It is clearly not exceptional. By the
induction assumption, the reduced tuple is absolutely continuous.

For $j\neq L$, we have $\Omega _{X_{j}}=\{E_{e_{1}\pm e_{k}}^{-}:k>J_{j}\}$
where $2J_{j}=S_{X_{j}}$. By choosing suitable Weyl conjugates, $k_{i}\in K$%
, we can arrange for $\bigcup\limits_{i=1}^{L-1}Ad(k_{i})\left( \Omega
_{X_{i}}\right) $ to contain $\Omega _{Y}$ for some choice of $Y$ which is
of type $D_{m}$ for $m=n-(L-1)n+\sum_{i=1}^{L-1}J_{i}$ (or regular $Y$ if $%
m<2$); for more details on how to do this, we refer the reader to \cite%
{GHAbs}.

The eligibility assumption ensures that the pair $(Y,X_{L})$ is eligible and
not exceptional. The arguments given in the proof of Theorem \ref{indstep}
Case 1 or Case 3, depending on whether $X_{L}$ is dominant $D$ or dominant $%
SU$ type, can then be used to show that there is some $k$ $\in K_{n-1}$, $%
M\in \mathfrak{k}_{n}$ and $\Omega _{0}\subseteq \Omega _{X_{L}}$ such that 
\begin{eqnarray*}
sp\Omega  &=&sp\{Ad(k)(\Omega _{Y}),\Omega _{X_{L}}\backslash \Omega _{0}\}
\\
&=&sp\{Ad(kk_{i})(\Omega _{X_{i}}),\Omega _{X_{L}}\backslash \Omega
_{0}:i=1,....,L-1\};
\end{eqnarray*}%
$ad^{k}(M)$ maps $\mathcal{N}_{X_{L}}\backslash \Omega _{0}\rightarrow
sp\{\Omega ,\mathfrak{p}_{n-1}\}$ for all positive integers $k$; and the
span of the projection of $Ad(\exp sM)(\Omega _{0})$ onto the orthogonal
complement of $sp\{\mathfrak{p}_{n-1},\Omega \}$ in $\mathfrak{p}_{n}$ is a
surjection for all small $s>0.$

Calling upon the general strategy, Prop. \ref{general strategy}, with $k_{i}$
replaced there by $kk_{i},$ we deduce that $(X_{1},...,X_{L})$ is an
absolutely continuous tuple.

This completes the induction argument and we now turn to the base cases.

When the restricted root system is type $D_{3}$, we want to show all
four-tuples are absolutely continuous and all triples are absolutely
continuous, except when all three elements are of type $SU(3)$. This reduces
to checking that the triples $(SU(3),SU(3),D_{2})$, $(SU(3),D_{2},D_{2})$
and $(D_{2},D_{2},D_{2})$ are absolutely continuous, and that the four-tuple
consisting of all elements of type $SU(3)$ is absolutely continuous. These
are easily checked using the criteria (\ref{WrC}), noting that any two root
systems of type $SU(3)$ will intersect non-trivially.

For type $D_{4},$ we have already seen in Lemma \ref{base} that any triple
with at least two terms that are dominant $SU$ type is absolutely
continuous, other than the exceptional triple. Thus we only need to check
the triples with two or three terms that are dominant $D$ type. But in any
of these cases the reduced triple in type $D_{3}$ will be absolutely
continuous and we can use the induction argument given earlier in this
proof, taking $D_{3}$ as the base case. This finishes the argument for
restricted root systems of type $D_{n}$.

When the restricted root system is type $B_{n}$ or $C_{n}$ the base case
argument is trivial because when $n=1$ the convolution of any two non-zero
orbital measures is absolutely continuous. To begin the induction argument,
we note that if two or more $X_{i}$ are dominant $SU$ type, then that pair
is itself eligible and hence absolutely continuous. Similarly, if two or
more $X_{i}^{\prime }$ are dominant $SU$ type, it is easy to see that the
corresponding $X_{i}$ are an eligible pair and hence are absolutely
continuous. Thus, again we can assume $S_{X_{i}^{\prime }}=S_{X_{i}}-2$ for
all but at most one $i$ and that ensures the reduced tuple is eligible. Now
apply the induction argument as done for type $D_{n}$ above, (starting with $%
\Omega =\{E_{\alpha }^{-},E_{(2)e_{1}}^{-}:\alpha =e_{1}\pm e_{j},2\leq
j\leq n\}$) obtaining a $Y$ that is either type \thinspace $B_{m}$ (or $C_{m}
$) if $m=n-(L-1)n+\sum_{i=1}^{L-1}J_{i}\geq 2$ or $B_{1}\ $(or $C_{1})$
otherwise.

The argument is easier, still, for type $A_{n-1}$. If two or more $X_{i}$
have $S_{X_{i}^{\prime }}=S_{X_{i}}$, then these satisfy $S_{X_{i}}\leq n/2$
and this fact implies the reduced $L$-tuple is eligible. Otherwise, at most
one $X_{i}$ has $S_{X_{i}^{\prime }}=S_{X_{i}}$ and again we deduce that the
reduced tuple is eligible. Now apply the induction argument in a similar
manner.
\end{proof}

\subsection{Proof of sufficiency for symmetric spaces with higher
dimensional, restricted root spaces}

We prove two more technical lemmas before completing the proof of
sufficiency for symmetric spaces with higher dimensional, restricted root
spaces.

\begin{lemma}
\label{SU}In the symmetric spaces whose restricted root systems are type $%
B_{n}$, $C_{n}$ or $BC_{n}$, the pairs of type $(SU(n),SU(n))$ are
absolutely continuous.
\end{lemma}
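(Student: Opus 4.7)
The plan is to proceed by induction on the rank $n$, using the General Strategy (Proposition \ref{general strategy}) in the style of Case 2 of Theorem \ref{indstep}, adapted to the higher-dimensional restricted root spaces arising here. For the base case $n=1$, an element of type $SU(1)$ has empty annihilator and is therefore regular, so Lemma \ref{reg} gives absolute continuity of any pair of non-zero elements.

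For the inductive step, let $X=Y=(a,\ldots ,a)\in\mathfrak{a}_n$ with $a\neq 0$; the reduced pair $(X',Y')$ is of type $(SU(n-1),SU(n-1))$ in the rank-$(n-1)$ symmetric space of the same Cartan family, hence absolutely continuous by the induction hypothesis. I would then apply Proposition \ref{general strategy} with $\Omega=\mathcal{V}_n\ominus\mathcal{V}_{n-1}$, which is the span of the vectors $X_\alpha^-$ over positive roots $\alpha$ involving $e_1$; with $\Omega_0=sp\{X_{2e_1}^-:X_{2e_1}\in\mathfrak{g}_{2e_1}\}$ in the $C_n$ and $BC_n$ cases (and the analogous $sp\{X_{e_1}^-\}$ for $B_n$); and with $M$ a nonzero element of $\mathfrak{g}_{2e_1}^+\subseteq\mathfrak{k}_n$ (respectively $\mathfrak{g}_{e_1}^+$). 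The closure condition $ad(\mathfrak{k}_{n-1})(\Omega)\subseteq sp\,\Omega$ holds because brackets with $\mathfrak{k}_{n-1}$ preserve the property of a root involving the first coordinate.

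To verify condition (i), take $k\in K_{n-1}$ representing the Weyl element that flips signs of coordinates $2,\ldots ,n$ (available because the Weyl group of $B_{n-1}$, $C_{n-1}$, $BC_{n-1}$ contains all sign changes), so that $Ad(k)\mathfrak{g}_{e_1+e_j}^-=\mathfrak{g}_{e_1-e_j}^-$ for $j\geq 2$; then $\Omega_X\setminus\Omega_0$ together with $Ad(k)\Omega_Y$ span all of $\Omega$. Condition (ii) is immediate from the bracket relation (\ref{adrule}): for $\alpha\in\Phi^+\setminus\Phi_X^+$ with $\alpha\neq 2e_1$, $ad(M)X_\alpha^-$ lies in $\mathfrak{g}_{2e_1+\alpha}^-\oplus\mathfrak{g}_{2e_1-\alpha}^-$, which is either zero or a subspace of $\Omega$. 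For condition (iii), the orthogonal complement of $sp\{\mathfrak{p}_{n-1},\Omega\}$ inside $\mathfrak{p}_n$ is a one-dimensional line in $\mathfrak{a}_n\ominus\mathfrak{a}_{n-1}$, and the leading $s$-term of $Ad(\exp sM)(X_{2e_1}^-)$ with $X_{2e_1}^-$ chosen dual to $M$ is $s\,[M,X_{2e_1}^-]$, a nonzero element of $\mathfrak{a}_n\ominus\mathfrak{a}_{n-1}$, which projects surjectively onto this line for all small $s>0$.

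The main obstacle is performing this analysis uniformly across the $B_n$, $C_n$, and $BC_n$ multiplicity structures, given that direct application of Wright's criterion (Theorem \ref{WrCriteria}) fails for the co-rank one subsystem $\Psi=A_{n-1}$ when the multiplicity of $2e_k$ (or $e_k$) is small relative to that of $e_i\pm e_j$. The non-degeneracy of the Killing-form pairing on $\mathfrak{g}_{(2)e_1}^+\times\mathfrak{g}_{(2)e_1}^-$ guarantees that $M$ and the dual basis of $\Omega_0$ can always be chosen so that the first-order term in condition (iii) survives, which is precisely what allows the inductive step to go through regardless of the ambient multiplicities.
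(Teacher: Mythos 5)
Your proposal is correct and follows essentially the same route as the paper's proof: induction on the rank with the trivial base case $n=1$, reduction to the pair of type $(SU(n-1),SU(n-1))$ via the induction hypothesis, and an application of Proposition \ref{general strategy} with $\Omega =\mathcal{V}_{n}\ominus \mathcal{V}_{n-1}$, the sign-flipping Weyl element of $K_{n-1}$, and $M$ taken from the $+$-part of $\mathfrak{g}_{(2)e_{1}}$. The only (harmless) deviation is that you take $\Omega _{0}$ to be the whole root space spanned by the vectors $X_{(2)e_{1}}^{-}$ rather than a single basis vector of it, which if anything streamlines the verification of condition (ii) when that restricted root space has dimension greater than one.
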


\begin{proof}
This will be an induction argument on $n$, similar to the argument given in
Case 2 of Theorem \ref{indstep}. The base case, $n=1$, is trivial, so assume
the result holds for $n-1$, $n\geq 2$. We will write the proof for type $%
C_{n}$; only notational changes are needed for the other types. Let 
\begin{equation*}
\Omega =\{E_{e_{1}\pm e_{j}}^{(u)-},E_{2e_{1}}^{(v)-}:j=2,...,n;u,v\}\text{.}
\end{equation*}%
where $\{E_{e_{1}\pm e_{j}}^{(u)}:u\}$ is a basis for the restricted root
space $\mathfrak{g}_{e_{1}\pm e_{j}}$ and $\{E_{2e_{1}}^{(v)}:v\}$ a basis
for $\mathfrak{g}_{2e_{1}}$. Let $\Omega _{0}$ be any one of the vectors $%
E_{2e_{1}}^{(v)-}$. Applying a Weyl conjugate, if necessary, there is no
loss of generality in assuming 
\begin{equation*}
\Omega _{X}=\Omega _{Y}=\{E_{e_{1}\pm
e_{j}}^{(u)-},E_{2e_{1}}^{(v)-}:j=2,...,n;u,v\}.
\end{equation*}%
Taking the Weyl conjugate $k\in K_{n-1}$ that changes the sign of the
letters $2,...,n$, we have 
\begin{equation*}
\{Ad(k)(\Omega _{Y}),\text{ }\Omega _{X}\diagdown \Omega _{0}\}=\Omega \text{%
.}
\end{equation*}

As $X^{\prime },Y^{\prime }$ are both type $SU(n-1),$ they are an absolutely
continuous pair in $\mathfrak{p}_{n-1}$. Now take $M=E_{2e_{1}}^{+}$. Since
the complement of $sp\{\mathfrak{p}_{n-1},\Omega \}$ in $\mathfrak{p}_{n}$
is spanned by any one basis vector in $\mathfrak{a}_{n}\ominus \mathfrak{a}%
_{n-1}$, an application of the general strategy, Prop. \ref{general strategy}%
, completes the argument.
\end{proof}

\begin{lemma}
\label{C4}The pair $(C_{k-2}\times SU(2),SU(k))$ is absolutely continuous in
any symmetric space whose restricted root system is type $C_{k}$, \thinspace 
$k=3,4$.
\end{lemma}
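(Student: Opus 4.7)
My plan is to apply Theorem~\ref{WrCriteria} (the Wright-style criterion) directly with $m=2$, $Z_1 = X$ of type $C_{k-2}\times SU(2)$, and $Z_2 = Y$ of type $SU(k)$. This requires verifying inequality~\eqref{WrC} for every closed co-rank one root subsystem $\Psi \subseteq C_k$. Up to Weyl conjugacy, such subsystems are the parabolic $A_{p-1}\times C_{k-p}$ for $1\le p\le k$, giving three cases in $C_3$ (namely $C_2$, $A_1\times C_1$, $A_2$) and four in $C_4$ (namely $C_3$, $A_2\times C_1$, $A_1\times C_2$, $A_3$). For each such $\Psi$, I compute $\dim\Phi$, $\dim\Psi$, $\dim\Phi_X$, $\dim\Phi_Y$ parametrically in terms of the short and long restricted root multiplicities $d_s$ and $d_\ell$ of the ambient symmetric space, so that both Cartan type $CI$ (with $d_s = d_\ell = 1$) and type $CII$ (with $d_s = 4$, $d_\ell = 3$) are handled simultaneously.

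The crux is minimizing $\dim(\Phi_Y\cap\sigma\Psi)$ and $\dim(\Phi_X\cap\sigma\Psi)$ over $\sigma\in W(C_k)$. Since the Weyl group acts by signed permutations, sign changes are essential: when $\Psi = A_{k-1}$, for instance, double sign-changes on a pair of coordinates convert several $e_i-e_j$ roots into $e_i+e_j$ roots absent from $\Phi_Y$, reducing $\dim(\Phi_Y\cap\sigma\Psi)$ all the way to $2d_s$. This case turns out to be the tightest: the inequality~\eqref{WrC} reduces to the mild condition $2d_\ell \geq 1$, which holds automatically. All remaining cases reduce to analogous positivity statements with ample slack, matching the fact that eligibility holds strictly for $k=3$ and with equality for $k=4$.

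I remark that one might hope to give an inductive proof via Proposition~\ref{general strategy}, using the reduced pair $(X',Y')$ which is $(SU(2),SU(2))$ in $C_2$ when $k=3$ (handled by Lemma~\ref{SU}) and $(C_1\times SU(2),SU(3))$ in $C_3$ when $k=4$. For $k=3$ this works cleanly by taking $\Omega = \mathcal{V}_3\ominus\mathcal{V}_2$, $\Omega_0 = \{E^{(u)-}_{e_1+e_2}:u\}$ (all basis vectors of the negative image of $\mathfrak{g}_{e_1+e_2}$, included together to prevent stray $\mathfrak{a}$-components in condition~(ii) of the General Strategy), and $M = E^+_{e_1+e_2}$. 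For $k=4$, however, the same approach cannot satisfy condition~(i): $\Omega_X$ contains no $e_2$-directions because $e_1\pm e_2\in\Phi_X$, and no single Weyl conjugate $k_1\in K_3$ of $\Omega_Y$ can simultaneously supply both signs of $e_2$ together with $E^-_{2e_1}$. This obstruction is why the Wright criterion is the cleaner uniform tool.

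The main obstacle is the combinatorial enumeration of sign-change conjugates in the Weyl minimization, particularly for $\Psi = A_{k-1}$ where $\Phi_Y$ is itself a standard $A_{k-1}$; one must carefully identify which signed permutations of the coordinates minimize the intersection dimension, and then verify the analogous minimizations for $\Phi_X$ against the same $\sigma\Psi$ to be sure the sharpened bounds on both sides are compatible.
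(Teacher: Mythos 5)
Your proposal is correct and follows essentially the same route as the paper: the paper's proof of Lemma~\ref{C4} also applies the criterion~(\ref{WrC}) directly, enumerating the co-rank one subsystems $C_3$, $C_2\times SU(2)$, $C_1\times SU(3)$, $SU(4)$ of $C_4$ (leaving $C_3$ as an exercise) and tabulating the dimensions and minimal Weyl-conjugate intersections in terms of the short and long root multiplicities, exactly as you do with $d_s,d_\ell$. Your identified tight case $\Psi=A_{k-1}$ with the inequality reducing to $2d_\ell\ge 1$ matches the paper's chart (which gives $\min\dim(SU(4)\cap SU(4))=2m_S$ and $\min\dim(C_2\times SU(2)\cap SU(4))=m_S$), so the verification goes through as you describe.
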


\begin{proof}
We will give the proof for $C_{4}$ and leave $C_{3}$ as an exercise. Suppose
the multiplicities of the long roots are $m_{L}$ and the multiplicities of
the short roots are $m_{S}$. In terms of this notation, $\dim \Phi
=12m_{S}+4m_{L}$. The co-rank one root subsystems are types $%
C_{3},C_{2}\times SU(2)$, $C_{1}\times SU(3)$ and $SU(4)$. The chart below
summarizes the pertinent information. When we write $\min \dim (\Psi
\bigcap \Psi ^{\prime })$ we mean the minimal dimension of the span of $%
\{X_{\alpha }^{-}:\alpha \in \sigma (\Psi )\bigcap \Phi ^{\prime }\}$ where 
$\Phi ^{\prime }$ is any root subsystem of type $\Psi ^{\prime }$ and $%
\sigma $ is any Weyl conjugate.

\begin{equation*}
\begin{array}{ccccc}
\Psi & C_{3} & C_{2}\times SU(2) & C_{1}\times SU(3) & SU(4) \\ 
\dim \Psi & 6m_{S}+3m_{L} & 3m_{S}+2m_{L} & m_{L}+3m_{S} & 6m_{S} \\ 
\min \dim (\Psi \bigcap SU(4)) & 3m_{S} & m_{S} & m_{S} & 2m_{S} \\ 
\min \dim (\Psi \bigcap C_{2}\times SU(2)) & m_{L}+m_{S} & m_{S} & 0 & m_{S}%
\end{array}%
\end{equation*}%
With these facts it is easy to check that the criterion (\ref{WrC}) is
satisfied.
\end{proof}

\textbf{Completion of the Proof of Sufficiency:} Theorems \ref{Propdim1} and %
\ref{Thmdim1} establish the absolute continuity of all eligible,
non-exceptional $L$-tuples in symmetric spaces all of whose restricted root
spaces have dimension one. That means the sufficiency result is proven for
the Cartan classes $AI$, $CI,$ $BI$ (with $q=p+1$) and $DI$ (with $p=q\geq 3$%
).

Lemma \ref{embedding1} shows that Cartan class $AI$ embeds into $AII$ (of
the same rank) with the identity map. Since the eligible, non-exceptional
tuples in type $AII$ are the same as those in type $AI$, sufficiency follows
for $AII$ directly from Prop. \ref{embedding2} (the embedding proposition).

Similarly, $BI$ with $q=p+1$ embeds into $BDI$ with $q>p$ and also into
Cartan types $AIII$ and $CII$ with $q>p$, in all cases with the identity
map. There are no exceptional tuples in restricted root systems of type $%
B_{p}$ (the restricted root system of type $BDI$ when $q>p$) and the
eligible tuples are the same in all these cases, hence we again obtain the
result for those types from the embedding proposition.

The Cartan type $DI$ with $p=q\geq 3$ has restricted root system type $D_{p}$
and embeds into type $AIII$ ($q=p$) with restricted root system of type $%
C_{p}$. This in turn embeds into type $CII$ also with restricted root system
of type $C_{p}$. Both embeddings are given by the identity map. Thus any
eligible $L$-tuple in type $AIII$ or $CII$ with $p\geq 3$, that is not
identified with an exceptional tuple in type $DI,$ is absolutely continuous.
By Lemma \ref{SU}, the eligible pair of type $(SU(p),SU(p))$ in either $AIII$
or $CII$ of rank $p$ is an absolutely continuous pair, and this implies the
same conclusion for the pair $(SU(p),SU(p-1))$ and the triple $%
(SU(p),SU(p),SU(p))$ when $p=3,4$. By Lemma \ref{C4}, the pairs $%
(C_{p-2}\times SU(2),SU(p))$ for $p=3,4$, and hence also the pair $%
(SU(2)\times SU(2),SU(4))$ when $p=4$, are absolutely continuous in types $%
AIII$ and $CII$. This shows that the eligible, but exceptional tuples in $DI$
are absolutely continuous tuples in types $AIII$ and $CII,$ when $p=q\geq 3$.

It was noted in the appendix that for Cartan type $AIII$ with $p=q$ we can
assume $p\geq 3$ and for type $CII$ with $p=q$ we can assume $p\geq 2$. For $%
CII$ with $p=q=2$ all pairs are eligible and we can check absolute
continuity by verifying the criterion (\ref{WrC}). This is very easy as the
only non-regular elements are types $C_{1}$ or $SU(2)$, so it remains only
to check the pairs $(C_{1},C_{1})$ and $(C_{1},SU(2))$. We leave the details
for the reader.

Lastly, consider the symmetric $SO^{\ast }(2n)/U(n),$ the space of Cartan
type $DIII$ where, as noted in the appendix, $n\geq 6$ when $n$ is even and $%
n\geq 3$ when $n$ is odd.

In Lemma \ref{embedding1} we saw that the Cartan type $IV$ symmetric space $%
SO(n,\mathbb{C})/SO(n)$ embeds into $SO^{\ast }(2n)/U(n)$ with the canonical
map. The symmetric spaces $SO(n,\mathbb{C})/SO(n)$ are dual to the compact
Lie groups $SO(n)$, considered as symmetric spaces, and have root systems of
type $D_{n/2}$ when $n\geq 6$ is even, or $B_{[n/2]}$ when $n\geq 3$ is odd.
In \cite{GHAbs} it was shown that all eligible, non-exceptional tuples in
these settings were absolutely continuous. In type $B_{[n/2]}$ there are no
exceptional tuples, so all eligible tuples in Cartan type $DIII$ with $n$
odd are absolutely continuous. When $n$ is even, the restricted root system
of Cartan type $DIII$ is type $C_{n/2}$, so again images of the exceptional
tuples from type $D_{n/2}$ must be shown to be absolutely continuous. This
is done in the same manner as for $AIII$ above.

\section{Proof of Necessity}

Finally, to complete the proof of the characterization theorem, we turn to
proving the necessity of eligibility and non-exceptionality.

\subsection{Eligibility is necessary}

\begin{proposition}
If $(X_{1},...,X_{L})\in \mathfrak{a}^{L}$ is not eligible, then $\mu
_{X_{1}}\ast \cdot \cdot \cdot \ast \mu _{X_{L}}$ is a singular measure.
\end{proposition}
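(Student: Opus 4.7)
The plan is to exhibit, for each non-eligible tuple, a proper algebraic subvariety of $\mathfrak{p}$ containing the support $O_{X_1}+\cdots+O_{X_L}$; singularity will then follow at once from Proposition~\ref{keyprop}. The engine is a rank bound in the natural matrix realization of each classical Type III symmetric space.

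First I would invoke the standard matrix models: elements of $\mathfrak{a}$ act as operators on a reference vector space of dimension $N$, with $N=n+1$ when the restricted root system is of type $A_n$ and $N=2n$ in types $B_n,C_n,D_n,BC_n$. A routine inspection across the Cartan classes $AI,AII,AIII,BI,CI,CII,DI,DIII$ identifies $S_X$ with the dimension of the largest eigenspace of $X$ in the matrix model: the $0$-eigenspace of dimension $2J$ when $X$ is dominant $B/C/D/BC$-type, and the $\pm a_{i_0}$-eigenspace of dimension $s_1$ when $X$ is dominant $SU$-type, with $a_{i_0}$ the shared value of the largest $SU$ block. Let $c_X\in\mathbb{R}$ denote the associated eigenvalue; it is an invariant of $O_X$ up to the sign ambiguity present in the dominant $SU$ case.

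The core computation is then this: for any $Y_j\in O_{X_j}$ and any admissible sign choice $\varepsilon_j\in\{\pm 1\}$ (forced to $+1$ in the dominant $B/C/D/BC$ case), the operator $Y_j-\varepsilon_j c_{X_j}I$ has rank at most $N-S_{X_j}$, so subadditivity of rank gives
\begin{equation*}
\mathrm{rank}\Bigl(\sum_{j=1}^L Y_j-\Bigl(\sum_{j=1}^L\varepsilon_j c_{X_j}\Bigr)I\Bigr)\;\leq\;LN-\sum_{j=1}^L S_{X_j}.
\end{equation*}
The non-eligibility hypothesis $\sum_j S_{X_j}>(L-1)N$ forces this bound to be strictly less than $N$, hence $\sum_j\varepsilon_j c_{X_j}$ is an eigenvalue of $\sum_j Y_j$. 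This is a nontrivial polynomial condition $\det(\sum_j Y_j-\lambda I)=0$ on $\mathfrak{p}$; taking the union over the finitely many sign choices still yields a proper algebraic subvariety, which contains $O_{X_1}+\cdots+O_{X_L}$ and so witnesses the singularity of the convolution.

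The main obstacle will be the case-by-case verification that $S_X$ really equals the dimension of the largest $X$-eigenspace in the matrix model appropriate to each Cartan Type III class. The real cases $AI,CI,DI,BI$ are immediate from standard real-matrix realizations, while $AII,AIII,CII,DIII$ require passing to complex or quaternionic models in which both the eigenspace dimensions and the ambient dimension $N$ rescale by the common restricted-root multiplicity; the non-eligibility inequality rescales identically, leaving the rank argument intact. The exceptional sign-flip case in type $D_n$ is similarly routine. A minor additional check is that the locus $\{Z\in\mathfrak{p}:\det(Z-\lambda I)=0\}$ is genuinely a proper subvariety, which follows at once from the existence of regular elements.
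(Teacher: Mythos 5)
Your overall strategy --- realize each $X_j$ as a matrix, identify $S_{X_j}$ with an eigenspace dimension, and use subadditivity of rank (equivalently, $\dim\bigcap_j V_j\geq\sum_j\dim V_j-(L-1)N$) to force a common eigenvalue on every element of $O_{X_1}+\cdots+O_{X_L}$ --- is exactly the paper's argument, and it works as you describe for Cartan classes $AI$, $AII$, $CI$, the $p=q$ cases of $BDI$, $AIII$, $CII$, and $DIII$ with $n$ even. The gap is in the classes whose restricted root system is $B_p$ or $BC_p$, namely $SO_0(p,q)$, $SU(p,q)$, $Sp(p,q)$ with $q>p$ and $SO^{\ast}(2n)$ with $n$ odd. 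There the matrix model is $(p+q)\times(p+q)$ (not $2p\times 2p$), the zero eigenspace of a dominant $B$/$BC$-type element has dimension $S_X+(q-p)$ rather than $S_X$, and --- crucially --- \emph{every} element of $\mathfrak{p}$ has $0$ as an eigenvalue with multiplicity at least $q-p\geq 1$. So your witness variety $\{\det(Z-\lambda I)=0\}$ with $\lambda=0$ is all of $\mathfrak{p}$ and proves nothing; you call the verification for $BI$ ``immediate,'' but that is precisely where both the identification $S_X=\dim(\text{largest eigenspace})$ and the value $N=2n$ fail.

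The repair (and this is what the paper does) is to upgrade the conclusion from ``$0$ is an eigenvalue'' to ``$0$ is an eigenvalue with multiplicity strictly greater than the generic multiplicity $q-p$,'' i.e.\ $\mathrm{rank}\bigl(\sum_j Y_j\bigr)<2p$, which is still a proper algebraic condition (vanishing of all $2p\times 2p$ minors). Your rank bookkeeping delivers this once corrected: for dominant-type $X_j$ one has $\mathrm{rank}(Y_j)\leq 2p-S_{X_j}$ for $Y_j\in O_{X_j}$, so $\mathrm{rank}\bigl(\sum_j Y_j\bigr)\leq 2pL-\sum_j S_{X_j}<2p$ by non-eligibility --- the two errors (wrong ambient dimension, wrong eigenspace dimension) offset each other exactly. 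You must also treat separately the mixed case in which exactly one $X_j$ is dominant $SU$ type, where the forced common eigenvalue $\sum_j c_{X_j}$ is nonzero and the determinant locus is genuinely proper; when two or more $X_j$ are dominant $SU$ type the tuple is automatically eligible, so that case does not arise. With these corrections your proof is complete and coincides with the paper's.
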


\begin{proof}
We will prove necessity using properties of the underlying symmetric spaces,
but the core idea is elementary linear algebra.

Case: Cartan type $AI$ and $AII$ of rank $n-1$. Restricted root space is
type $A_{n-1}$.

Here $\mathfrak{g}=sl(n,F)$, the $n\times n$ matrices over $F$ with the real
part of their trace $0,$ where $F$ is $\mathbb{R}$ for type $AI$ and $F$ is
the Quaternions for type $AII$. The space $\mathfrak{p}$ consists of the
Hermitian members of $\mathfrak{g}$ and $\mathfrak{a}$ can be taken to be
the real diagonal matrices in $\mathfrak{p}$ (see \cite[p.371]{Kn}).

For $X\in \mathfrak{a}$, $S_{X}$ is the dimension of the largest eigenspace
of matrix $X$.

Given non-eligible $L$-tuple, $(X_{1},...,X_{L})\in \mathfrak{a}^{L}$, let $%
\alpha _{j}$ be the eigenvalue of $X_{j}$ (viewed as an element of $sl(n,F)$%
) with greatest multiplicity. Let $k_{j}\in K$ and denote by $V_{j}$ the
eigenspace of $Ad(k_{j})X_{j}$ corresponding to $\alpha _{j}$. Then 
\begin{equation*}
\dim \bigcap\limits_{j=1}^{L}V_{j}\geq \sum_{j=1}^{L}\dim
V_{j}-n(L-1)=\sum_{j=1}^{L}S_{X_{j}}-n(L-1)\geq 1.
\end{equation*}%
Now, for any $v\in \bigcap\limits_{j=1}^{L}V_{j}$ we have $%
(Ad(k_{j})X_{j})v=\alpha _{j}v$, hence the matrix $%
\sum_{j=1}^{L}Ad(k_{j})X_{j}$ has eigenvalue $\sum \alpha _{j}$. This proves
every element of $\sum_{j=1}^{L}O_{X_{j}}$ has eigenvalue $\sum \alpha _{j}$
and that implies$\sum_{j=1}^{L}O_{X_{j}}$ must have empty interior. Prop. %
\ref{keyprop} tells us $\mu _{X_{1}}\ast \cdot \cdot \cdot \ast \mu _{X_{L}}$
is a singular measure.

Case: Cartan type $CI$ of rank $n$. Restricted root space is type $C_{n}.$

In this symmetric space $\mathfrak{p}$ is the space of $2n\times 2n$
matrices of the form $\left[ 
\begin{array}{cc}
Z_{1} & Z_{2} \\ 
Z_{2} & -Z_{1}%
\end{array}%
\right] ,$ where $Z_{1},Z_{2}$ are $n\times n$ real matrices with $Z_{1}$
symmetric and $Z_{2}$ skew-symmetric. Take for $\mathfrak{a}$ the diagonal
matrices in $\mathfrak{p}$ (see \cite[p.454]{He}). We identify the diagonal
matrix $diag(b_{1},....,b_{n},-b_{1},...,-b_{n})$ with the $n$-vector $%
(b_{1},...,b_{n})$.

If $X\in \mathfrak{a}$ is type $C_{J}\times SU(s_{1})\times \cdot \cdot
\cdot \times SU(s_{t})$, then $0$ is an eigenvalue of $X$ with multiplicity $%
2J$ and $X$ has pairs of non-zero eigenvalues with multiplicity $s_{j}$. It
follows that the dimension of the largest eigenspace of $Ad(k)X$ is $S_{X}$
for any $k\in K$.

As above, we argue that if $(X_{1},...,X_{L})$ is not eligible, then every
element of $\sum_{j=1}^{L}O_{X_{j}}$ has a common eigenvalue and hence the
sum must have empty interior.

Case: Cartan types $BI$ and $DI$ - Symmetric space $SO_{0}(p,q)/SO(p)\times
SO(q)$ with $q\geq p$. Restricted root space is type $C_{p}$ (if $q>p)$ or
type $D_{p}$ if $q=p.$

Here $\mathfrak{p}$ consists of the $(p+q)\times (p+q)$ matrices $\left[ 
\begin{array}{cc}
0 & Z \\ 
Z^{t} & 0%
\end{array}%
\right] $ where $Z$ is a real $p\times q$ matrix. The space $\mathfrak{a}$
can be taken to be the set of matrices of the form 
\begin{equation}
X=\left[ 
\begin{array}{ccc}
0_{p\times p} & H & 0_{p\times (q-p)} \\ 
H & 0_{p\times p} & 0_{p\times (q-p)} \\ 
0_{(q-p)\times p} & 0_{(q-p)\times p} & 0_{(q-p)\times (q-p)}%
\end{array}%
\right]  \label{BTorus}
\end{equation}%
where $H$ is a real diagonal $p\times p$ matrix (see \cite{GSColloq}). One
can see from this description that a subset of $\mathfrak{p}$ in which every
element has $0$ as an eigenvalue with multiplicity greater than $q-p,$ or
contains only elements which have a common non-zero eigenvalue, cannot be
open.

We identify the matrix $X\in \mathfrak{a}$ with the $p$-vector whose entries
are the diagonal entries of $H$,%
\begin{equation}
X=(\underbrace{0,....,0}_{J},\underbrace{a_{1},...,a_{1}}_{s_{1}},...%
\underbrace{,a_{m},...,a_{m}}_{s_{m}}).  \label{nvector}
\end{equation}%
Then $0$ is an eigenvalue of $X$ with multiplicity $2J+q-p$ and each $\pm
ia_{j}$ is an eigenvalue of multiplicity $s_{j}$.

Suppose $(X_{1},...,X_{L})$ is not eligible and $k_{j}\in K$. If all $X_{j}$
are dominant $C$ or $D$ type and $V_{j}$ is the eigenspace of $%
Ad(k_{j})X_{j} $ corresponding to the eigenvalue $0$, then%
\begin{equation*}
\sum_{j=1}^{L}\dim V_{j}=\sum_{j=1}^{L}S_{X_{j}}+L(q-p)\geq (L-1)2p+1+L(q-p).
\end{equation*}%
Thus%
\begin{equation*}
\dim \bigcap\limits_{j=1}^{L}V_{j}\geq (L-1)2p+1+L(q-p)-(L-1)(p+q)\geq q-p+1.
\end{equation*}%
Consequently, $0$ is an eigenvalue of every element of $%
\sum_{j=1}^{L}O_{X_{j}}$ with multiplicity greater than $q-p$ and hence this
sum must have empty interior.

If, instead, one $X_{j}$ is dominant $SU$ type, then a similar argument
shows every element of $\sum_{j=1}^{L}O_{X_{j}}$ has a non-zero eigenvalue
with multiplicity at least one. Again it follows that $%
\sum_{j=1}^{L}O_{X_{j}}$ has empty interior.

If two or more $X_{j}$ are dominant $SU$ type, then $(X_{1},...,X_{L})$ is
eligible so we do not need to consider this case.

Case: Cartan types $AIII$ and $CII$

The arguments are the same as for $BDI$ as the only difference is that $%
\mathfrak{p}$ consists of complex or quaterion valued matrices.

Case: Cartan type $DIII$ with rank $m=[n/2]$. Restricted root space is $%
C_{m} $ or $BC_{m}$ depending on whether $n$ is even or odd.

Here $\mathfrak{p}$ consists of the purely imaginary, trace zero matrices of
the form $\left[ 
\begin{array}{cc}
Z_{1} & Z_{2} \\ 
Z_{2} & -Z_{1}%
\end{array}%
\right] ,$ where $Z_{1},Z_{2}$ are $n\times n$ symmetric matrices$.$ The
space $\mathfrak{a}$ consists of the matrices in $\mathfrak{p}$ of the form $%
X=\left[ 
\begin{array}{cc}
H & 0 \\ 
0 & -H%
\end{array}%
\right] $ where $H$ is block diagonal with $m$ $2\times 2$ blocks $\left[ 
\begin{array}{cc}
0 & b_{j} \\ 
b_{j} & 0%
\end{array}%
\right] $ when $n$ is even and and an additional entry of $0$ in the $(n,n)$
position if $n$ is odd (see \cite[p.454-5]{He}). We identify $X$ with the $m$%
-vector $(b_{1},...,b_{m})$.

If $X$ is type $(B)C_{J}\times SU(s_{1})\times \cdot \cdot \cdot \times
SU(s_{t})$ (depending on whether $n$ is even or odd), then $0$ is an
eigenvalue with eigenspace of dimension $4J$ if $n$ is even and $4J+2$ if $n$
is odd. The non-zero eigenvalues have multiplicities $2s_{j}$.

If $n$ is even, the dimension of the largest eigenspace is $2S_{X}$. As the
matrices in $\mathfrak{p}$ are size $4m\times 4m,$ the argument for the
necessity of eligibility is similar to type $CI$.

If $n$ is odd, then we require a slight variant on the argument. If all $%
X_{j}$ are dominant $BC$ type, then each has $0$ as its eigenvalue of
greatest multiplicity $2S_{X_{j}}+2$. If $V_{j}$ is the corresponding
eigenspace of $Ad(k_{j})X_{j}$, then 
\begin{eqnarray*}
\dim \bigcap_{j=1}^{L}V_{j} &\geq &\sum_{j=1}^{L}(2S_{X_{j}}+2)-(L-1)2n \\
&\geq &2((L-1)2m+1)+2L-2n(L-1)\geq 4.
\end{eqnarray*}%
Since the generic element of $\mathfrak{p}$ has $0$ as an eigenvalue with
multiplicity $2$, this implies $\sum_{j=1}^{L}O_{X_{j}}$ has empty interior.

If precisely one $X_{j}$ is of dominant $SU$ type, then as above we argue
that each element of $\sum_{j=1}^{L}O_{X_{j}}$ has a common non-zero
eigenvalue with multiplicity at least one. If two or more $X_{j}$ are
dominant $SU$ type, then $(X_{1},...,X_{L})$ is eligible.
\end{proof}

\subsection{Exceptional tuples are not absolutely continuous}

\begin{proposition}
If $(X_{1},...,X_{L})\in \mathfrak{a}_{n}^{L}$ is exceptional, then $\mu
_{X_{1}}\ast \cdot \cdot \cdot \ast \mu _{X_{L}}$ is a singular measure.
\end{proposition}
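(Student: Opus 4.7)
The plan is to show in each exceptional case that $\sum_{j}Ad(k_j)\mathcal{N}_{X_j}$ fails to span $\mathfrak{p}$ for every choice of $k_j\in K$; by Proposition \ref{keyprop} this forces $\mu_{X_1}\ast\cdots\ast\mu_{X_L}$ to be singular.  I would split the argument into two techniques.

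First, dimension counting: using $\dim\mathcal{N}_Z=\sum_{\alpha\in\Phi^+\setminus\Phi_Z^+}\dim\mathfrak{g}_\alpha$, one checks directly that in several of the exceptional cases $\sum_{j=1}^L\dim\mathcal{N}_{X_j}$ is already strictly less than $\dim\mathfrak{p}$, which rules out the spanning condition for any choice of $k_j$.  This immediately handles case (2) in both subcases, case (3) when $Z_2$ has type $SU(2)\times D_2$, and case (1) in Cartan type $AI$ where the restricted root multiplicity is one.  In case (4) with $n=3$ and in case (3) when $Z_2$ has type $SU(2)\times SU(2)$ the dimension sum equals $\dim\mathfrak{p}$ exactly, and in case (1) for Cartan type $AII$ and in case (4) with $n=4$ it strictly exceeds $\dim\mathfrak{p}$, so dimensional considerations alone are inconclusive and a further argument is needed.

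For the remaining cases the plan rests on the observation, already used in the proof of the previous proposition, that in the natural matrix realisation of $\mathfrak{p}$ an element $Z$ of type $SU(n)$ in a restricted root system of type $A_{2n-1}$ or $D_n$ sits as a matrix satisfying $Z^2=a^2 I$, and the identity persists on the whole $Ad(K)$-orbit.  For $L=2$ with both $Y_j^2=a_j^2 I$, writing $X=Y_1+Y_2$ and expanding $(X-Y_2)^2=a_1^2 I$ gives the matrix identity
\[
\{X,Y_2\}=X^2+(a_2^2-a_1^2)\,I.
\]
For $X$ of simple spectrum---an open condition on $\mathfrak{p}$---evaluating this in the eigenbasis of $X$ forces $Y_2$ to be diagonal in that basis, and the condition $Y_2\in O_{Z_2}$ then restricts its diagonal entries to $\pm a_2$, so that the eigenvalues of $X$ are drawn from the four-element set $\{\pm a_1\pm a_2\}$.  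This confines $O_{Z_1}+O_{Z_2}$ to a proper algebraic subvariety of $\mathfrak{p}$, forcing empty interior.  In the first subcase of case (3), $Z_2$ of type $SU(2)\times SU(2)$ satisfies only the quartic $(Z_2^2-b^2I)(Z_2^2-c^2I)=0$ rather than a scalar square, but the same eigenbasis analysis combined with $Y_1^2=a^2 I$ still produces a restrictive spectral condition on $X$.

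I expect the main obstacle to be case (4) with $n=4$, where the dimension sum exceeds $\dim\mathfrak{p}$ by the widest margin and all content must come from the polynomial identities.  Here I would iterate the identity: using $(X-Y_3)^2=(a_1^2+a_2^2)I+\{Y_1,Y_2\}$ together with $Y_3^2=a_3^2 I$ leaves the anticommutator $\{Y_1,Y_2\}$ as the only residual unknown beyond $X$; the Weyl-conjugacy hypothesis on the three annihilator systems then ensures all three orbits lie on the common scalar-square variety, and parametrising the remaining freedom in $\{Y_1,Y_2\}$ shows that the system cuts out a subvariety of $\mathfrak{p}$ of strictly smaller dimension, again giving empty interior.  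Case (4) with $n=3$ can be reduced to case (1) via the accidental isomorphism $D_3\cong A_3$, avoiding a separate combinatorial argument.
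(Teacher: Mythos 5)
Your dimension counts are accurate and dispose of exactly the cases the paper handles that way (case (2), the $SU(2)\times D_{2}$ subcase of (3), and case (1) in Cartan type $AI$), and your anticommutator/eigenbasis argument for case (1) in type $AII$ is a legitimate reconstruction of the Graczyk--Sawyer proof that the paper simply cites from \cite{GSLie}. Your reduction of case (4) with $n=3$ via $\mathfrak{so}(3,3)\cong \mathfrak{sl}(4,\mathbb{R})$ also works, though what it produces is a \emph{non-eligible} triple of type-$SU(3)$ elements in the rank-three $A_{3}$ space (handled by the previous proposition), not an instance of case (1); the paper instead gets this case by contradiction from the $D_{4}$ case using Lemma \ref{base}(2).

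The genuine gap is in the two remaining $D_{4}$ cases, and it is structural, not technical. Every identity you propose to use there --- $Y^{2}=a^{2}I$ for type $SU(4)$, the quartic $(Y^{2}-b^{2}I)(Y^{2}-c^{2}I)=0$ for type $SU(2)\times SU(2)$ --- is blind to the Weyl-conjugacy class of the annihilator: $(a,a,a,a)$ and $(a,a,a,-a)$ are isospectral elements of $\mathfrak{p}$ satisfying identical polynomial relations (they are conjugate under $O(4)\times O(4)$ but not under $SO(4)\times SO(4)$), and likewise $(b,b,c,c)$ versus $(b,b,c,-c)$. But the paper proves (Lemma \ref{base}(1) and the discussion around (\ref{WrC})) that a triple of $SU(4)$-type elements whose annihilators are \emph{not} all Weyl conjugate, and a pair $(SU(4),SU(2)\times SU(2))$ in which $\Phi_{Z_{2}}$ is \emph{not} conjugate into $\Phi_{Z_{1}}$, are absolutely continuous. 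An argument built only from conjugation-invariant spectral identities would apply verbatim to those absolutely continuous tuples, so it cannot be completed; your appeal to "the Weyl-conjugacy hypothesis" does no work, since all $SU(4)$-type orbits lie on the scalar-square variety regardless of class. The missing ingredient is the paper's use of the automorphism of $D_{4}$ (triality) that interchanges the $D_{3}$-type and $SU(4)$-type rank-three subsystems and lifts to an isomorphism of the symmetric space preserving $\mathfrak{k}_{4}$ and $\mathfrak{p}_{4}$: it carries precisely the exceptional tuples to non-eligible ones (for instance, three elements of type $D_{3}$, or a pair $(D_{3},D_{2})$), which are singular by the previous proposition, while leaving their non-exceptional twins eligible.
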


\begin{proof}
Different arguments will be needed for the different exceptional tuples.

1. Cartan type $DI$ of rank $n$

Case: $X$ is type $SU(n)$, $Y$ is either type $SU(n)$ or $SU(n-1)$;
restricted root system of type $D_{n}$.

It suffices to check the family of pairs $(SU(n),SU(n-1))$ is singular. One
can see that $\dim \mathcal{N}_{X}=\binom{n}{2}$ and $\dim \mathcal{N}_{Y}=%
\binom{n-1}{2}+2(n-1)$. Since $\dim \mathfrak{p}_{n}=n^{2},$ it is
impossible for $Ad(k_{1})\mathcal{N}_{X}+Ad(k_{2})\mathcal{N}_{Y}$ to equal $%
\mathfrak{p}_{n}$ for any choice of $k_{1},k_{2}$ and hence the pair is
singular.

Case: Other exceptional tuples in $D_{4}$.

Consider the isomorphism $\pi $ that identifies a root system of type $D_{3}$
with one of type $A_{3}$. With this identification, the exceptional tuples
are all identified with tuples that fail to be eligible in $D_{4}$ and hence
are not absolutely continuous. For example, a triple of Weyl conjugate root
subsystems of type $SU(4)$ is identified with a triple of root subsystems of
type $D_{3}$ in $D_{4}$ and such a triple is not eligible. The isomorphism $%
\pi $ lifts to an isomorphism of the symmetric space that preserves $%
\mathfrak{p}_{4}$ and $\mathfrak{k}_{4}$ and hence the failure of the
absolute continuity of the original tuples follows.

We remark that a different argument was given in \cite{GSArxiv} for the
exceptional pairs in type $D_{4}$.

Case: $X,Y,Z$ all type $SU(3)$ in $D_{3}$.

If such a triple was absolutely continuous, the induction argument, Lemma %
\ref{base}(2), would imply any triple of elements of type $SU(4)$ in $D_{4}$
would be absolutely continuous.

2. Cartan type $AI$ or $AII$ of rank $n-1$

Case: $X,Y$ both of type $SU(n/2)\times SU(n/2)$; restricted root system of
type $A_{n-1}$.

These pairs were proven to be singular in \cite{GSLie}. We note that for the
Cartan type $AI$, a dimension argument, as was given in the first case
above, would also establish the failure of absolute continuity.
\end{proof}

\section{Appendix}

In the chart below we list the irreducible, Riemannian globally symmetric
spaces of Type III. For each, we give the non-compact group $G$, the compact
subgroup $K$, its Cartan class, the Lie type of its restricted root system
and the dimensions of the restricted root spaces $\mathfrak{g}_{\alpha }$.
The rank is the subscript on the label of the restricted root system.

These details can be found in \cite[ch.X]{He}, \cite[VI.4]{Kn}, \cite[p.219]%
{Bu} and \cite[p.72]{CM}

\frame{$%
\begin{array}{cccccc}
\begin{array}{c}
\text{Cartan} \\ 
\text{class}%
\end{array}
& G/K & 
\begin{array}{c}
\text{Restricted} \\ 
\text{root system}%
\end{array}
& 
\begin{array}{c}
\dim \mathfrak{g}_{\alpha }\text{ for } \\ 
a=e_{i}\pm e_{j}%
\end{array}
& \alpha =e_{i} & \alpha =2e_{i} \\ 
AI & SL(n,\mathbb{R)}/SO(n),n\geq 2 & A_{n-1} & 1 & - & - \\ 
AII & SL(n,\mathbb{H)}/SO(n),n\geq 2 & A_{n-1} & 4 & - & - \\ 
AIII & 
\begin{array}{c}
SU(p,q)/SU(p)\times SU(q), \\ 
p=q\geq 3,q>p\geq 1%
\end{array}
& 
\begin{array}{c}
C_{p}\text{(if }p=q\text{)} \\ 
BC_{p}\text{(if }q>p\text{)}%
\end{array}
& 2 & 2(q-p) & 1 \\ 
CI & Sp(n,\mathbb{R)}/SU(n),n\geq 1 & C_{n} & 1 & 0 & 1 \\ 
CII & 
\begin{array}{c}
Sp(p,q)/Sp(p)\times Sp(q), \\ 
p=q\geq 2,q>p\geq 1%
\end{array}
& 
\begin{array}{c}
C_{p}\text{(if }p=q\text{)} \\ 
BC_{p}\text{(if }q>p\text{)}%
\end{array}
& 4 & 3 & 4(q-p) \\ 
DIII\text{(even)} & SO^{\ast }(2n)/U(n),n\geq 6 & C_{n/2} & 4 & 0 & 1 \\ 
DIII\text{(odd)} & SO^{\ast }(2n)/U(n),n\geq 3 & BC_{[n/2]} & 4 & 1 & 4 \\ 
\begin{array}{c}
BI\text{(}p+q\text{ odd)} \\ 
DI\text{(}p+q\text{ even)}%
\end{array}
& 
\begin{array}{c}
SO_{0}(p,q)/SO(p)\times SO(q), \\ 
q>p\geq 1%
\end{array}
& B_{p} & 1 & q-p & 0 \\ 
DI & 
\begin{array}{c}
SO_{0}(p,p)/SO(p)\times SO(p), \\ 
p\geq 3%
\end{array}
& D_{p} & 1 & 0 & 0%
\end{array}%
$}

We have omitted some from the list as they are isomorphic to others.

\begin{itemize}
\item $AIII$ with $p=q=1$ is isomorphic to $AI$ with $n=2$

\item $AIII$ with $p=q=2$ is isomorphic to $BI$ with $q=4,p=2$

\item $CII$ with $p=q=1$ is isomorphic to $BI$ with $q=4,p=1$

\item $DIII$ with $n=4$ is isomorphic to $DI$ with $q=6,p=2$
\end{itemize}

Type $DI$ with $p=2=q$ and type $DIII$ with $n=2$ are not irreducible.

We also describe below the restricted root systems of types $A_{n},B_{n}$, $%
C_{n},D_{n}$ and $BC_{n}$.%
\begin{equation*}
\frame{$%
\begin{array}{cc}
\begin{array}{c}
\text{Root system} \\ 
\text{type}%
\end{array}
& \text{Restricted root system }\Phi _{n}^{+} \\ 
A_{n} & \{e_{i}-e_{j}:1\leq i<j\leq n+1\} \\ 
B_{n} & \{e_{i},e_{i}\pm e_{j}:1\leq i\neq j\leq n\} \\ 
C_{n} & \{2e_{i},e_{i}\pm e_{j}:1\leq i\neq j\leq n\} \\ 
D_{n} & \{e_{i}\pm e_{j}:1\leq i\neq j\leq n\} \\ 
BC_{n} & \{e_{i},2e_{i},e_{i}\pm e_{j}:1\leq i\neq j\leq n\}%
\end{array}%
$}
\end{equation*}

\end{document}